\def\R{{\mathbb{R}}}
\def\N{{\mathbb{N}}}
\def\Z{{\mathbb{Z}}}
\renewcommand{\P}{\mathbb{P}}
\newcommand{\capa}{\mathrm{cap}} 
\newcommand{\local}{\mathcal{L}} 
\newcommand{\ZZ}{\mathbf{Z}}  
\newcommand{\cube}{\mathrm{Q}} 
\newcommand{\edges}{\square}   
\newcommand{\dcube}{{\partial_{\mathrm{int}}\mathrm{Q}}}
\newcommand{\genl}{\ell}  
\newcommand{\ballZ}{\mathrm{B}}  
\newcommand{\ballZZ}{\mathbf{B}} 
\newcommand{\sigmagood}{\sigma({\Sigma_{\scriptscriptstyle{ \mathcal{G},N}}})}
\newcommand{\extb}{{\partial_{\mathrm{ext}}}}  
\newcommand{\intb}{{\partial_{\mathrm{int}}}}  
\newcommand{\inn}{\mathrm{in}}
\newcommand{\outt}{\mathrm{out}}
\newcommand{\salgebra}{{\mathcal F_{\Omega}}} 
\newcommand{\setW}{\mathrm{W}} 
\newcommand{\intbb}{\partial_{\mathrm{\bf int}}} 
\newcommand{\Sigmagood}{{\Sigma_{\scriptscriptstyle{ \mathcal{G},N}}}}
\newcommand{\Sigmaall}{\Sigma}
\newcommand{\sigmagoodel}{{\sigma_{\scriptscriptstyle{ \mathcal{G},N}}}}
\newcommand{\sigmaallel}{\sigma}
\newcommand{\Sigmagoodout}{{\Sigma^\outt_{\scriptscriptstyle{ \mathcal{G},N}}}}
\newcommand{\sigmagoodelout}{{\sigma^\outt_{\scriptscriptstyle{ \mathcal{G},N}}}}
\newcommand{\loczd}{\N^{\Z^d}} 
\newcommand{\loczdsig}{\mathcal{F}_{\ell}} 
\definecolor{Red}{rgb}{1,0,0}
\newtheorem{theorem}{Theorem}[section]
\newtheorem{corollary}[theorem]{Corollary}
\newtheorem{lemma}[theorem]{Lemma}
\newtheoremstyle{likedef}
  {}%
  {}%
  {}%
  {\parindent}%
  {\bfseries}%
  {.}%
  {.5em}%
  {}%
\theoremstyle{likedef}
\newtheorem{definition}[theorem]{Definition}
\newtheorem{remark}[theorem]{Remark}
\newtheorem{claim}[theorem]{Claim}
\numberwithin{equation}{section}
\begin{document}

\title{Local percolative properties of the vacant set of \\  random interlacements with small intensity}

\author{
Alexander Drewitz
\thanks{Columbia University, Department of Mathematics, RM 614, MC 4419,
2990 Broadway, New York City, NY 10027, USA. email: drewitz@math.columbia.edu}
\and
Bal\'azs R\'ath
\thanks{
The University of British Columbia, Department of Mathematics,
Room 121, 1984 Mathematics Road, Vancouver, B.C.
Canada V6T 1Z2. email: rathb@math.ubc.ca}
\and
Art\"{e}m Sapozhnikov
\thanks{Max-Planck Institute for Mathematics in the Sciences, 
Inselstrasse 22, 04103 Leipzig, Germany. 
email: artem.sapozhnikov@mis.mpg.de}
}

\maketitle


\begin{abstract}
Random interlacements at level $u$  is a one parameter family of connected
random  subsets of $\Z^d$, $d \geq 3$ \cite{SznitmanAM}. 
Its complement, the vacant set at level $u$, exhibits a non-trivial percolation
phase transition in $u$ \cite{SznitmanAM,SidSzn_cpam}, 
and the infinite connected component, when it exists, is almost surely unique \cite{Teixeira_AAP}. 

In this paper we study local percolative properties of the vacant set of random interlacements at level $u$ 
for all dimensions $d\geq 3$ and small intensity parameter $u>0$.
We give a stretched exponential bound on the probability that a large (hyper)cube
contains two distinct macroscopic components of the vacant set at level $u$. 
In particular, this implies that finite connected components of the vacant set at level $u$ are unlikely to be large. 
These results are new for $d\in\{3,4\}$. 
The case of $d\geq 5$ was treated in \cite{Teixeira}
by a method that crucially relies on
a certain ``sausage decomposition'' of the trace of a high-dimensional bi-infinite random walk.
Our approach is independent from that of \cite{Teixeira}. 
It only exploits basic properties of random walks, such as Green function estimates and Markov property, 
and, as a result, applies also to the more challenging low-dimensional cases. 
One of the main ingredients in the proof is a certain conditional independence property of the random interlacements, 
which is interesting in its own right. 
\end{abstract}

%
%

\section{Introduction}

Random interlacements $\mathcal I^u$ at level $u>0$ on $\Z^d$, $d\geq 3$,  
is a one parameter family of random connected subsets of $\Z^d$, 
introduced by Sznitman \cite{SznitmanAM}, which
 arises as the local limit as $N \to \infty$ of the set of sites visited by a simple random walk on
the discrete torus $(\Z/N\Z)^d$, $d \geq 3$ when it runs up to time $\lfloor u  N^d \rfloor$, 
 see \cite{windisch_torus}. 
The law of $\mathcal I^u\subseteq \Z^d$ is uniquely characterized by the equations:
\begin{equation}\label{def:Iu}
\mathbb P[\mathcal I^u\cap K = \emptyset] = e^{-u\cdot \capa(K)}, \quad\text{for any finite $K\subseteq\Z^d$,}
\end{equation}
where $\capa(K)$ denotes the discrete capacity of $K$, defined in \eqref{def:capacity} below. 
It is proved among other results in \cite{SznitmanAM} that for any $u>0$, 
$\mathcal I^u$ is almost surely connected, and 
its law is invariant and ergodic with respect to the lattice shifts. 
In fact, in \cite{SznitmanAM}, a more constructive definition of $\mathcal I^u$ is given, 
which we recall in Section~\ref{sec:randominterlacements}. 
Informally, it states that $\mathcal I^u$ is the trace of a certain cloud of bi-infinite random walk trajectories in $\Z^d$, 
with $u$ measuring the density of this cloud. 

The vacant set $\mathcal V^u$ at level $u$ is the complement of $\mathcal I^u$ in $\Z^d$. 
We view $\mathcal V^u$ as a random graph 
by drawing an edge between any two vertices of the vacant set at $L_1$-distance $1$ 
from each other. 
The vacant set exhibits a non-trivial structural phase transition in $u$, i.e.,   
 there exists $u_*\in(0,\infty)$ such that 
\begin{itemize}\itemsep1pt
\item[(i)] 
for any $u>u_*$, almost surely, all connected components of $\mathcal V^u$  are finite, and 
\item[(ii)]
for any $u<u_*$, almost surely, $\mathcal V^u$ contains an infinite connected component. 
\end{itemize}
In particular, the finiteness of $u_*$ for $d \geq 3$ and
the positivity of $u_*$ for $d \geq 7$ were proved in \cite{SznitmanAM}, and
the latter result was extended to all dimensions $d \geq 3$ in \cite{SidSzn_cpam}.
It is also known that $\mathcal V^u$ contains at most one infinite connected component (see \cite{Teixeira_AAP});
in particular, for any $u<u_*$, the infinite connected component is almost surely unique. 

\medskip

In this paper, we are interested in the local structure of the vacant set in the regime of small $u$. 
More specifically, we show that with high probability, 
the unique infinite connected component of $\mathcal V^u$ is ``visible'' in 
large hypercubic subsets of $\Z^d$ (as the unique macroscopic connected component 
in the restriction of $\mathcal V^u$ to large hypercubes of $\Z^d$). 
Our main result is the following theorem.
\begin{theorem}[Local uniqueness for $\mathcal V^u$] \label{thm:sts:interlacement}
For any $d\geq 3$, there exist $u_1>0$, $c = c(d)>0$ and $C = C(d)<\infty$ such that 
for all $0 \leq u \leq u_1$ and $n\geq 1$, we have
\begin{equation}\label{eq:sts:interlacement:1}
\mathbb P\left[
\begin{array}{c}
\text{the infinite connected component of $\mathcal V^u$}\\
\text{intersects $\ballZ(0,n)$}
\end{array}
\right] \geq 1 - Ce^{-n^c} 
\end{equation}
and
\begin{equation}\label{eq:sts:inerlacement:2}
\mathbb P\left[
\begin{array}{c}
\text{any two connected subsets of $\mathcal V^u\cap\ballZ(0,n)$ with}\\
\text{diameter $\geq n/10$ are connected in $\mathcal V^u\cap\ballZ(0,2n)$}
\end{array}
\right]
\geq 1 - Ce^{-n^c} .\
\end{equation}
\end{theorem}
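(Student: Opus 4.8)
The plan is to derive both \eqref{eq:sts:interlacement:1} and \eqref{eq:sts:inerlacement:2} from a single renormalization estimate --- that with probability at least $1-Ce^{-n^{c}}$ the set $\mathcal V^{u}\cap\ballZ(0,2n)$ has a \emph{unique} macroscopic connected component, which moreover is \emph{ubiquitous} at a fixed small scale (it meets every sub-box of some fixed small side) --- keeping the small intensity $u$ fixed throughout. First I would fix a large $L_{0}=L_{0}(d)$ and call $Q=\ballZ(x,L_{0})$ \emph{good} if $\mathcal V^{u}\cap\ballZ(x,2L_{0})$ has a connected component $\mathcal{C}_{Q}$ that (i) joins every pair of opposite faces of $Q$, (ii) meets every sub-box of $Q$ of side $\lceil L_{0}/10\rceil$, and (iii) is the only component of $\mathcal V^{u}\cap\ballZ(x,2L_{0})$ of diameter $\ge L_{0}/10$. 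Since $\capa(\ballZ(x,2L_{0}))\le cL_{0}^{d-2}$, \eqref{def:Iu} gives $\P[\mathcal I^{u}\cap\ballZ(x,2L_{0})=\emptyset]\ge 1-cuL_{0}^{d-2}$, and on that event $Q$ is trivially good; so $\P[Q\text{ not good}]\le cuL_{0}^{d-2}$, uniformly over the level $u$. Having fixed $L_{0}$ and the ratio $\ell$ below, I may then pick $u_{1}=u_{1}(d)>0$ (and $u_{1}<u_{*}$) so small that $\bar p_{0}:=\sup_{v\le u_{1}}\sup_{x}\P_{v}[\ballZ(x,L_{0})\text{ not good}]$ lies below any prescribed threshold.

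Next I set $L_{k}=\ell^{k}L_{0}$, with $\ell=\ell(d)$ a large integer, and for $k\ge1$ declare $\ballZ(x,L_{k})$ \emph{good} unless it has two scale-$L_{k-1}$ sub-boxes, from a fixed overlapping cover, that are both not good and at mutual $\ell_{\infty}$-distance $\ge L_{k}/100$. The purpose of this recursive definition is a purely deterministic fact, proved by induction on $k$: a good scale-$L_{k}$ box carries a unique macroscopic component with properties (i)--(iii) at scale $L_{k}$. Indeed, the crossing clusters of its good sub-boxes pairwise intersect on their common regions --- here ubiquity (ii) is used --- so they glue into one connected vacant set; and since the not-good sub-boxes occupy only a small clustered portion of $\ballZ(x,L_{k})$, that set can be routed around them while remaining crossing, ubiquitous, and absorbing of every macroscopic vacant piece in $\ballZ(x,L_{k})$. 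Writing $\bar p_{k}=\sup_{v\le u_{1}}\sup_{x}\P_{v}[\ballZ(x,L_{k})\text{ not good}]$ and bounding by a union over the $O(\ell^{2d})$ admissible pairs of sub-boxes,
\begin{equation}\label{eq:plan:rec}
\bar p_{k}\ \le\ C\ell^{2d}\,\sup\,\P_{v}\big[\,Q_{1}\text{ and }Q_{2}\text{ both not good}\,\big],
\end{equation}
the supremum over $v\le u_{1}$ and over $Q_{1}=\ballZ(y_{1},2L_{k-1}),\,Q_{2}=\ballZ(y_{2},2L_{k-1})$ with $\|y_{1}-y_{2}\|_{\infty}\ge L_{k}/100$.

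The main obstacle is the decorrelation in \eqref{eq:plan:rec}: one bi-infinite interlacement trajectory can visit both $Q_{1}$ and $Q_{2}$, so the two events are dependent, and --- unlike in the high-dimensional setting of \cite{Teixeira} --- for $d\in\{3,4\}$ the footprint of such a trajectory on a box is not a bounded union of excursions. I would handle this with the conditional independence property of $\mathcal I^{u}$, which I expect to be the heart of the matter and, stated for general configurations, the ingredient ``interesting in its own right''. Condition on $\mathcal{A}$, the restrictions of all interlacement trajectories to $(Q_{1}\cup Q_{2})^{c}$. Since $\|y_{1}-y_{2}\|_{\infty}\ge L_{k}/100\gg L_{k-1}$, the set $Q_{1}\cup Q_{2}$ is disconnected in $\mathbb Z^{d}$, so every maximal excursion of a trajectory inside $Q_{1}\cup Q_{2}$ stays within a single $Q_{i}$, and by the strong Markov property these excursions are, given $\mathcal{A}$, independent random walks with $\mathcal{A}$-measurable entrance and exit points; hence $\mathcal I^{u}\cap Q_{1}$ and $\mathcal I^{u}\cap Q_{2}$ are conditionally independent given $\mathcal{A}$, and
\begin{equation}\label{eq:plan:ci}
\P_{v}\big[\,Q_{1},Q_{2}\text{ both not good}\,\big]\ =\ \E_{v}\big[\,\P_{v}[Q_{1}\text{ not good}\mid\mathcal{A}]\ \P_{v}[Q_{2}\text{ not good}\mid\mathcal{A}]\,\big].
\end{equation}
It remains to bound $\P_{v}[Q_{i}\text{ not good}\mid\mathcal{A}]$ by $\bar p_{k-1}$ off an event of probability at most $e^{-cL_{k-1}^{d-2}}$. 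Given $\mathcal{A}$, $\mathcal I^{u}\cap Q_{i}$ is a union of $N_{i}$ conditionally independent random walk excursions in $Q_{i}$ whose number and endpoints are $\mathcal{A}$-measurable; Green-function and capacity estimates show $N_{i}$ has mean $O(vL_{k-1}^{d-2})$ with exponential tails and that the entrance points are typically well spread, so that on the complement of an event of probability $\le e^{-cL_{k-1}^{d-2}}$ this conditional excursion configuration is no richer than the trace of an interlacement at a level $\le u_{1}$ (choosing $\ell$ large keeps the level from inflating past $u_{1}$), whence $\P_{v}[Q_{i}\text{ not good}\mid\mathcal{A}]\le\bar p_{k-1}$ there by monotonicity of ``not good'' in the occupied set. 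Carrying out this comparison carefully is the principal technical work --- it is where the excursion-level analysis of $\mathcal I^{u}$ is really needed. Combined with \eqref{eq:plan:ci} it gives $\P_{v}[Q_{1},Q_{2}\text{ both not good}]\le\bar p_{k-1}^{2}+3e^{-cL_{k-1}^{d-2}}$, so by \eqref{eq:plan:rec}, $\bar p_{k}\le C\ell^{2d}(\bar p_{k-1}^{2}+3e^{-cL_{k-1}^{d-2}})$; as $\bar p_{0}$ was chosen small and the additive errors are super-exponentially small in $k$, iterating yields $\bar p_{k}\le Ce^{-L_{k}^{\,c}}$ for all $k\ge0$ with a suitable $c=c(d)>0$ --- a local-uniqueness estimate at scale $L_{k}$.

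Finally, with $L_{K}\asymp n$ the bound $\bar p_{K}\le Ce^{-L_{K}^{c}}$ says that, with probability $\ge1-Ce^{-n^{c}}$, the box $\ballZ(0,2n)$ is good, hence $\mathcal V^{u}\cap\ballZ(0,2n)$ has a unique macroscopic ubiquitous component $\mathcal{C}$. For \eqref{eq:sts:inerlacement:2}: a connected $S\subseteq\mathcal V^{u}\cap\ballZ(0,n)$ of diameter $\ge n/10$ is too large to fit in the small clustered set of not-good sub-boxes, so some connected piece of it fully crosses a \emph{good} scale-$L_{K-1}$ sub-box; that piece is macroscopic at scale $L_{K-1}$, hence contained in the giant component of that sub-box, which is part of $\mathcal{C}$; so any two such $S,S'$ meet $\mathcal{C}$ and are connected inside $\mathcal V^{u}\cap\ballZ(0,2n)$. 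For \eqref{eq:sts:interlacement:1}: applying the scale-$L_{k}$ bound to $\ballZ(0,n)$ and simultaneously to a geometric family of overlapping dyadic shells around it (a routine annular variant), whose total failure probability $\sum_{j\ge0}Ce^{-(2^{j}n)^{c}}$ is $\le C'e^{-n^{c}}$, the crossing cluster of $\ballZ(0,n)$ glues, through the shells' ubiquitous crossing clusters, to arbitrarily large scales, hence lies in an infinite vacant component --- a.s.\ unique by \cite{Teixeira_AAP} --- which therefore meets $\ballZ(0,n)$ with probability $\ge1-C'e^{-n^{c}}$.
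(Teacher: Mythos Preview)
Your renormalization has a genuine gap at the decoupling step. The seed event ``$Q$ is good'' is \emph{not} monotone in $\mathcal I^{u}$: while crossing and ubiquity (your (i)--(ii)) are decreasing in the interlacement, the uniqueness condition (iii) is not --- adding occupied vertices can destroy a second macroscopic vacant cluster just as well as it can destroy the first, so a box can pass from not-good to good when $\mathcal I^{u}$ grows. This non-monotonicity propagates by induction to all scales $L_{k}$. It kills the line ``whence $\P_{v}[Q_{i}\text{ not good}\mid\mathcal{A}]\le\bar p_{k-1}$ by monotonicity of `not good' in the occupied set'': even if you could stochastically dominate the conditional excursion cloud by $\mathcal I^{u_{1}}$ (itself a nontrivial claim you defer to ``carrying out this comparison carefully''), domination transfers probability bounds only for monotone events. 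The decorrelation inequalities of \cite{Sznitman:Decoupling} that you would need to make \eqref{eq:plan:rec} effective likewise apply only to monotone events, so the recursion $\bar p_{k}\le C\ell^{2d}(\bar p_{k-1}^{2}+\text{error})$ is unjustified.

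The paper avoids this by \emph{not} renormalizing the local-uniqueness event. Its good-box conditions (Definition~\ref{def:good}) --- the frame $\edges(x')$ is vacant and the boundary local time is bounded --- are both monotone in the interlacement local times, so Lemma~\ref{l:probability:cascading} (the decoupling machinery) applies and yields a ubiquitous infinite cluster of good boxes whose vacant frames form a connected ``fat'' subset of $\mathcal V^{u}$ (Corollary~\ref{cor:bad:*paths}). Local uniqueness is then established by a separate, non-renormalization argument in Section~\ref{sec:proofthm}: any long vacant path must pass through many good boxes of this infinite cluster, and at each such box the explicit conditional law of the interlacement excursions inside (Lemma~\ref{l:condindep}, equation~\eqref{eq:EinnK:proba}) combined with the local-time bound from goodness gives a uniformly positive probability that the excursions avoid a short ``tunnel'' connecting the path to the frame (Lemma~\ref{l:Pkbad}). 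Your conditional-independence idea \emph{is} the right tool, but it is deployed here not to decorrelate two copies of a non-monotone event, rather to lower-bound the probability of a single favorable excursion configuration inside one box.
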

Statement \eqref{eq:sts:interlacement:1} has already been known 
(it easily follows from \cite[Theorem 5.1]{Sznitman:Decoupling}), 
but we include it here for completeness. 
For $d\geq 5$, statement \eqref{eq:sts:inerlacement:2} follows from 
the stronger statement of \cite[Theorem~3.2]{Teixeira}. 
Our contribution to the result of Theorem~\ref{thm:sts:interlacement} is twofold. 
Firstly, the result \eqref{eq:sts:inerlacement:2} is new for $d\in\{3,4\}$. 
Secondly, our proof of \eqref{eq:sts:inerlacement:2} is conceptually different from that of \cite{Teixeira}, and 
applies to all dimensions $d\geq 3$. 
Let us briefly explain the strategy in the proof of \cite{Teixeira} and why it cannot be used in low dimensions.
The proof in \cite{Teixeira} crucially relies on the 
fact that if $d \geq 5$, the trace of a bi-infinite random walk contains
many  bilateral cut-points (see \cite[(6.1),(6.26)]{Teixeira}). 
This gives a decomposition of the random walk trace into a chain of relatively small well-separated ``sausages''. 
Heuristically, a chain of sausages cannot separate two macroscopic connected subsets of a box.
Random interlacements at level $u$ is the trace of a certain Poisson cloud of doubly infinite random walk trajectories in $\Z^d$, 
and, therefore, can be viewed as the countable union of doubly infinite chains of ``sausages'' in $\Z^d$. 
Thus, in order to show that random interlacements at level $u$ cannot separate two macroscopic connected subsets of a large box, 
one needs to show that locally it generally looks like the trace of only bounded number of random walks. 
This is achieved in \cite{Teixeira} with a renormalization argument. 
The sausage decomposition property fails for $d\leq 4$ (see, e.g., \cite[Theorem~2.6]{Lawler80}).
In fact, in dimension $d=3$, even the trace of a single random walk is a ``two-dimensional'' object, and, therefore,  
could in principle form a large separating surface in a box. 
This is not the case, as we discuss in Section~\ref{sec:bonus}.
Our proof of \eqref{eq:sts:inerlacement:2} 
only exploits basic properties of random walks (Green function estimates, Markov property) 
and works for all dimensions $d\geq 3$.

\medskip

The results of Theorem~\ref{thm:sts:interlacement} are in the spirit of the local uniqueness 
property of supercritical Bernoulli percolation (see, e.g., \cite[(7.89)]{Grimmett}). 
In fact, the analogues of \eqref{eq:sts:interlacement:1} and \eqref{eq:sts:inerlacement:2} for Bernoulli percolation
hold through the whole supercritical phase. 
We believe that the bounds \eqref{eq:sts:interlacement:1} and \eqref{eq:sts:inerlacement:2} also hold 
for all $u<u_*$, but with constants $c = c(d,u)>0$ and $C = C(d,u)<\infty$ depending on $u$. 
Our current understanding of the model is not good enough to be able to rigorously justify this belief. 

The main technical challenges in the proof of Theorem~\ref{thm:sts:interlacement} 
come from the long-range dependence of the random interlacements (see, e.g., \cite[Remark 1.6 (4)]{SznitmanAM}), 
the lack of the BK inequality (see, e.g., \cite[(2.12)]{Grimmett} and \cite[Remark 1.5 (3)]{Sznitman:Decoupling}) 
and the absence of finite energy property (see, e.g., \cite[Remark~2.2 (3)]{SznitmanAM}).

\medskip

As an immediate corollary of Theorem~\ref{thm:sts:interlacement} we obtain that 
finite connected components of the vacant set at level $u$ are unlikely to be large when $u$ is small enough. 
\begin{corollary}\label{cor:finitecluster}
For any $d\geq 3$, there exist $c = c(d)>0$ and $C = C(d)<\infty$ such that 
for all $u\leq u_1$ (defined in Theorem~\ref{thm:sts:interlacement}), we have 
\begin{equation}\label{eq:Cdiameter}
\mathbb P\left[n\leq \mathrm{diam}(\mathcal C^u(0)) < \infty \right] \leq C e^{-n^c} 
\end{equation}
and 
\begin{equation}\label{eq:Csize}
\mathbb P\left[n\leq |\mathcal C^u(0)| < \infty\right] \leq C e^{-n^c} ,\
\end{equation}
where $\mathrm{diam}(\mathcal C^u(0))$ and $|\mathcal C^u(0)|$ denote the diameter and the cardinality
of the connected component of the origin in $\mathcal V^u$, respectively.
\end{corollary}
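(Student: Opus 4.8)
The plan is to obtain Corollary~\ref{cor:finitecluster} from Theorem~\ref{thm:sts:interlacement} by a short deterministic argument requiring no further probabilistic input. Both estimates are trivial (after enlarging $C$) when $n$ is smaller than a fixed constant, and the case $u=0$ is trivial since then $\mathcal C^u(0)=\Z^d$ is infinite; so I may assume $n$ large and $0<u\le u_1$, in which case $\mathcal V^u$ almost surely possesses a unique infinite connected component, which I denote by $\mathcal C_\infty$.

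First I would introduce the event
\[
A_n=\bigl\{\mathcal C_\infty\cap\ballZ(0,\lfloor n/2\rfloor)\neq\emptyset\bigr\}\ \cap\ \bigl\{\text{every two connected subsets of }\mathcal V^u\cap\ballZ(0,n)\text{ of diameter }\ge n/10\text{ are connected in }\mathcal V^u\cap\ballZ(0,2n)\bigr\}
\]
and observe that, by \eqref{eq:sts:interlacement:1} applied with $\lfloor n/2\rfloor$ in place of $n$, by \eqref{eq:sts:inerlacement:2}, and a union bound, $\mathbb P[A_n^c]\le Ce^{-n^c}$ after relabelling the constants $c$ and $C$.

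The heart of the matter is the deterministic assertion that on the event $A_n$ one cannot have $n\le\mathrm{diam}(\mathcal C^u(0))<\infty$; granting this, \eqref{eq:Cdiameter} follows immediately, because its left-hand side is then bounded by $\mathbb P[A_n^c]$. To prove the assertion, suppose $\mathrm{diam}(\mathcal C^u(0))\ge n$. Since $\mathcal C^u(0)$ is connected, contains $0$, and contains two vertices at mutual distance $\ge n$, it contains a vertex $x$ with $|x|\ge n/2$; picking a nearest-neighbour path in $\mathcal C^u(0)$ from $0$ to $x$ and keeping its initial segment up to the first exit of $\ballZ(0,\lceil n/4\rceil)$ produces a connected set $S_1\subseteq\mathcal V^u\cap\ballZ(0,n)$ with $0\in S_1$ and $\mathrm{diam}(S_1)\ge n/4\ge n/10$. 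Similarly, starting from a vertex of $\mathcal C_\infty$ in $\ballZ(0,\lfloor n/2\rfloor)$ and following a path in $\mathcal C_\infty$ until it first reaches distance $\ge 3n/4$ from $0$ (which is possible since $\mathcal C_\infty$ is unbounded) yields a connected set $S_2\subseteq\mathcal V^u\cap\ballZ(0,n)$ with $S_2\subseteq\mathcal C_\infty$ and $\mathrm{diam}(S_2)\ge n/4\ge n/10$. On $A_n$ the sets $S_1$ and $S_2$ are connected in $\mathcal V^u$, so $0$ lies in the same $\mathcal V^u$-component as $\mathcal C_\infty$, whence $\mathcal C^u(0)=\mathcal C_\infty$ is infinite --- a contradiction.

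Finally, \eqref{eq:Csize} would follow by feeding \eqref{eq:Cdiameter} the elementary volume bound: a connected subset of $\Z^d$ of cardinality at least $n$ has diameter at least $c_d\,n^{1/d}$, because a ball of radius $r$ in $\Z^d$ contains at most $C_d r^d$ vertices; hence $\{n\le|\mathcal C^u(0)|<\infty\}\subseteq\{\lceil c_d n^{1/d}\rceil\le\mathrm{diam}(\mathcal C^u(0))<\infty\}$, and one applies \eqref{eq:Cdiameter} with $n$ replaced by $\lceil c_d n^{1/d}\rceil$, adjusting the constants at the end. I do not anticipate any genuine obstacle in this argument; the only point that needs care is the bookkeeping of scales --- ensuring that both excised pieces $S_1\subseteq\mathcal C^u(0)$ and $S_2\subseteq\mathcal C_\infty$ really lie inside $\ballZ(0,n)$ and exceed the diameter threshold $n/10$, so that \eqref{eq:sts:inerlacement:2} is applicable and forces the cluster of the origin to coincide with the infinite component.
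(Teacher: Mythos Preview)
Your argument is correct and is precisely the short deterministic deduction the paper has in mind: the authors do not spell out a proof of Corollary~\ref{cor:finitecluster}, calling it ``an immediate corollary of Theorem~\ref{thm:sts:interlacement}''. Your bookkeeping of scales (cutting out $S_1\subseteq\mathcal C^u(0)$ and $S_2\subseteq\mathcal C_\infty$ inside $\ballZ(0,n)$ with diameters exceeding $n/10$, then invoking \eqref{eq:sts:inerlacement:2}) and the reduction of \eqref{eq:Csize} to \eqref{eq:Cdiameter} via the volume bound are exactly the intended steps.
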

Again, when $d\geq 5$, the result of Corollary~\ref{cor:finitecluster} follows from \cite[Theorems 3.5 and 3.6]{Teixeira}. 
The analogue of Corollary~\ref{cor:finitecluster} for supercritical Bernoulli percolation is well known, 
and as Theorem~\ref{thm:sts:interlacement}, it is a property of the whole supercritical phase of Bernoulli percolation 
(see, e.g., \cite{Chayes_Chayes_Newman_87}, \cite{KestenZhang} and \cite[Chapter~8]{Grimmett}). 
Moreover, the analogue of \eqref{eq:Cdiameter} for Bernoulli percolation holds with exponential decay rate 
(see, \cite[(8.20)]{Grimmett}), 
and the analogue of \eqref{eq:Csize} holds with stretched exponential decay with the explicit exponent $c = (d-1)/d$ 
(see, e.g., \cite[(8.66)]{Grimmett}). 

\medskip

Let us now mention some applications of Theorem~\ref{thm:sts:interlacement}. 
In \cite{RS:BP}, Theorem~\ref{thm:sts:interlacement} is used to study 
the stability of the phase transition of the vacant set under a small quenched noise. 
The setup is the following. 
For a positive $\varepsilon$, we allow each vertex of the random interlacement (referred to as occupied) to become vacant, 
and each vertex of the vacant set to become occupied with probability $\varepsilon$, 
independently of the randomness of the interlacement, and independently for different vertices.
In \cite[Theorem~5]{RS:BP} it is proved that for any $u$ which satisfies \eqref{eq:sts:interlacement:1}
 and \eqref{eq:sts:inerlacement:2}, 
the perturbed vacant set at level $u$ still has an infinite connected component if the noise is small enough. 
In particular, this statement together with Theorem~\ref{thm:sts:interlacement} imply that 
the perturbed vacant set at small level $u$ still has an infinite connected component. 
The use of Theorem~\ref{thm:sts:interlacement}
significantly simplifies the original proof of \cite[Theorems~3 and 5]{RS:BP} given 
in the first version of \cite{RS:BP}.  

In \cite[Theorem~2.3]{DRS}, we use Theorem~\ref{thm:sts:interlacement} as an ingredient to prove that 
the graph distance in the unique infinite connected component of the vacant set at small level $u$ is 
comparable to the graph distance on $\Z^d$, 
and establish a shape theorem for balls with respect to graph distance on the infinite connected component. 

We  believe that the methods of this paper 
can be applied in order to further explore the fragmentation of the torus $(\Z/N\Z)^d$ by the trace of a simple
random walk, in a similar fashion to \cite{teixeira_windisch}, where a strong coupling between the random
walk trace on the torus and random interlacements is used to transfer
 results of \cite{Teixeira} to the torus. We further discuss
this possibility as well as the analogue of Theorem \ref{thm:sts:interlacement}
for the set of sites avoided by a simple random walk on $\Z^d$ 
in Section~\ref{sec:bonus}.

\bigskip

We will now briefly sketch the main ideas of the proof of Theorem~\ref{thm:sts:interlacement}. 
A more detailed description of the main steps of the proof will be given 
at the beginning of Sections \ref{sec:coarsegraining}, \ref{sec:condindep}, and \ref{sec:proofthm}. 
Before reading those descriptions, we advise the reader to become familiar with 
basic definitions and results concerning random interlacements in 
Sections \ref{sec:randominterlacements} and \ref{subsection_local_times}. 

The proof uses coarse graining (see Section~\ref{sec:coarsegraining}) and 
a conditional independence property for random interlacements (see Section~\ref{sec:condindep}). 
The need for coarse graining comes from the fact that the complement of the 
infinite connected component of the vacant set is almost surely connected, no matter how small the parameter $u$ is. 
(This is immediate from the fact that $\mathcal I^u$ is almost surely connected for any given $u$,
 see \cite[(2.21)]{SznitmanAM}.)
The reader familiar with Bernoulli percolation may notice that this would not be the case if
 the vertices were made vacant 
independently from each other. 
In this case, the usual Peierls argument would easily give the analogue of Theorem~\ref{thm:sts:interlacement} 
for Bernoulli percolation, when the vacant set has  density close to one.

To overcome the problem arising from the connectedness of $\mathcal I^u$, 
we partition $\Z^d$ into $L_\infty$-boxes $(\ballZ(x',R)~:~ x'\in(2R+1)\cdot\Z^d)$, with some $R\geq 0$. 
We use a variant of Sznitman's decoupling inequalities \cite{Sznitman:Decoupling} to show 
that when $R$ is large enough, there is a unique infinite connected subset of good boxes 
which are ``sufficiently vacant''. 
Moreover, the remaining (bad) boxes form only finite connected subsets of $\Z^d$, 
with stretched exponential decay of the probability that a connected component of bad boxes is large. 
Our definition of good boxes also assures that 
the infinite connected component of good boxes contains an infinite connected subset of $\mathcal V^u$, 
which intersects every good box of the above set. 
For concreteness, in this proof sketch, we call this infinite connected subset of $\mathcal V^u$ the ``fat'' set. 
As a result, we obtain that with high probability, 
any nearest-neighbor path of $\Z^d$ with large diameter 
often intersects the infinite connected component of good boxes, and therefore
gets within distance $R$ from the fat set. 

However, the possibility of having a long nearest-neighbor path in $\mathcal V^u$ which avoids the fat set 
(but unavoidably, with high probability, gets $R$-close to it sufficiently often) still remains. 
We use a conditional independence property of random interlacements (see Section~\ref{sec:condindep}) 
to show  that, roughly speaking, conditionally on  the fact that a vacant path connects to 
a good box of the infinite connected set of good boxes and also conditioning on the configuration outside this box, 
there is still a uniformly positive chance that this vacant path is connected inside the specified 
good box to the fat set.
The difficulty in the proof of this claim comes from the fact that 
random interlacements do not posess the so-called finite energy property 
(see, e.g., \cite[Remark~2.2 (3)]{SznitmanAM}). 
In words, the fact that $\mathcal I^u$ is a connected set implies that 
depending on the realization of $\mathcal I^u$ outside a box, 
not every configuration can be realized by $\mathcal I^u$ inside this box. 
(This is a big constraint, and, for example, causes some difficulties in the proof of the uniqueness of an
infinite connected component of $\mathcal V^u$, see \cite{Teixeira_AAP}.)
Our definition of good boxes  
 is chosen specifically to 
overcome this problem. 
Coming back to the proof sketch, 
since each long path must visit many good boxes in the infinite connected component, 
we conclude  that 
with high probability each long path in $\mathcal V^u$ must be connected to the fat set. 
This gives us \eqref{eq:sts:inerlacement:2}.

\bigskip

The paper is organized as follows. 
In Section~\ref{sec:notation}, we define the notation used in the paper, 
state some basic results about the simple random walk on $\Z^d$, 
define random interlacements and recall some of its properties, 
the most important of which is Lemma~\ref{l:probability:cascading}. 
It is based on \cite[Corollary~3.5]{Sznitman:Decoupling}, but formulated more generally (using so-called 
interlacement local times defined in Section \ref{subsection_local_times}). 
Therefore, we give its proof sketch in the Appendix. 

In Section~\ref{sec:coarsegraining}, we define coarse graining, and prove the existence of a ``fat'' infinite 
connected subset of $\mathcal V^u$, when $u$ is small enough (see Corollary~\ref{cor:bad:*paths}). 

In Section~\ref{sec:condindep}, we prove a conditional independence property of random interlacements 
(see Lemma~\ref{l:condindep}). 

In Section~\ref{sec:proofthm}, we prove Theorem~\ref{thm:sts:interlacement} 
using the results of Sections \ref{sec:coarsegraining} and \ref{sec:condindep}. 

Finally, in Section~\ref{sec:bonus}, we briefly mention applications of the ideas developed in this paper to 
the vacant set of a simple random walk on $\Z^d$ and $(\Z/N\Z)^d$.

\section{Notation, model, preliminaries}\label{sec:notation}

\subsection{Basic notation}\label{subsection_basic_notation}

We denote by $\N=\{0,1,\dots\}$ the set of natural numbers, by $\Z$ the set of integers. 
We denote by $\mathbb R$ the set of real numbers and by $\mathbb R_+$ the set of non-negative reals. 
For $a\in\R$, we write $|a|$ for the absolute value of $a$, and $\lfloor a\rfloor$ for
 the integer part of $a$.

For any $d \geq 1$,  we denote by $x=(x_1,\dots,x_d)$ a generic element 
 of $\Z^d$, also referred to as \emph{vertex} of $\Z^d$.
We denote by $|x|=\max_{ 1\leq i\leq d }|x_i|$ the sup-norm of $x \in \Z^d$ 
and by $|x|_1=\sum_{i=1}^d |x_i|$ the $L_1$-norm of $x$.
For $K \subset \Z^d$, we denote by $|K|$ the cardinality of $K$.
We write $K \subset \subset \Z^d$ when $K\subset \Z^d$ and $|K|<\infty$.  

We say that $x,x' \in \Z^d$ are nearest neighbors (respectively, $*$-neighbors) if $|x-x'|_1=1$
 (respectively, $|x-x'|=1$). We also denote $|x-x'|_1=1$ by $x \sim x'$.
We say that $\pi=(z_1,\dots,z_n)$ is a nearest neighbor path 
(respectively, $*$-path) if $z_i$ and $z_{i+1}$ are nearest neighbors (respectively, $*$-neighbors)
for all $1 \leq i \leq n-1$, and we use the notation $|\pi| = n$ 
(not to be confused with the cardinality of the set $\{z_1,\dots,z_n\}$). 
 We say that $V \subseteq \Z^d$ is connected (respectively, $*$-connected)
 if any pair $x_1,x_2 \in V$ can be connected by a nearest neighbor path (respectively, $*$-path)
 with vertices in $V$.

For $x \in \Z^d$ and $R \in \N$ 
we denote by $\ballZ(x,R)=\{ y \in \Z^d \, : \, |x-y|\leq R \}$ the closed ball of radius
$R$ around $x$ with respect to the sup-norm. For any set $V \subseteq \Z^d$, we denote by
$V^c = \Z^d \setminus V$.

The interior boundary of $K\subseteq\Z^d$, $\intb K$ is the set of vertices of $K$ that have some neighbor in $K^c$.

The exterior boundary of $K\subseteq\Z^d$, $\extb K$ is the set of vertices of $K^c$ that have some neighbor in $K$.

Given a probability space $(\Omega, \mathcal{F}, \P)$ and $A \in \mathcal{F}$,  
we denote by $\mathds{1}_A$ the indicator of the event $A$. If $X$ is an integrable random variable on
$(\Omega, \mathcal{F}, \P)$, we denote  $\mathbb{E}[X ; A]=\mathbb{E}[X\cdot \mathds{1}_A ]$. 

For $-\infty \leq a < b \leq +\infty$, we denote by $\mathcal B([a,b])$ the Borel $\sigma$-algebra on $[a,b]$. 

Our agreement about the constants used in the paper is the following. 
We denote small positive constants by $c$ and large finite constants by $C$. 
When needed, we emphasize the dependence of a constant on parameters. 
If the constant only depends on $d$, then we sometimes do not mention it at all. 
The value of a constant may change within the same formula.

\subsection{Simple random walk and potential theory}

The space $W_+$ stands for the set of infinite nearest-neighbor trajectories, 
defined for non-negative times and tending to infinity:
\begin{equation}\label{def_eq_W_plus}
W_+ = \big\{ w : \mathbb{N} \rightarrow \Z^d, \; \;  w (n) \sim w (n+1), \;
n \in \N, \; \;  \lim_{n \to \infty} |w(n)|=\infty   \big\}.\\
\end{equation}
We endow $W_+$ with the $\sigma$-algebra $\mathcal{W}_+$ generated by the canonical coordinate maps $X_n$, $n \in \N$.
For each $k \in \N$,  we define the shift map $\theta_k: W_+ \rightarrow W_+$ by
$\theta_k(w)(\cdot) = w(\cdot + k)$.
For $x\in\Z^d$, let $P_x$ denote the law of simple random walk  on $\Z^d$ with starting point $x$. 
Simple random walk on $\Z^d$, $d \geq 3$, is transient and the set $W_+$ has full measure under any $P_x$.
From now on we will view $P_x$ as a measure on $(W_+, \mathcal{W}_+)$, and 
we write $(X(t)~:~t\in\N)$ for a random element of $W_+$ with distribution $P_x$. 

For $U \subseteq \Z^d$ and $w \in W_+$, we define
\begin{align}
H_U(w)  &=  \inf\{n \ge 0 \,:\, X_n(w) \in U\}, \quad \text{the entrance time in $U$,} \label{def:HU} \\
\widetilde{H}_U(w) &= \inf\{ n \ge 1 \, :\, X_n(w) \in U\}, \quad \text{the hitting time of $U$,}\label{def:tildeHU} \\
T_U(w)  &=  \inf\{n \ge 0\, :\, X_n(w) \notin U\}, \quad \text{the exit time from $U$.} \label{def:TU}
\end{align}

For $d\geq 3$, the Green function $g~:~\Z^d\times\Z^d\to[0,\infty)$ of
the simple random walk $X$ is defined as
\begin{equation*}
g(x,y) = \sum_{t=0}^\infty P_x[X(t) = y],~~x,y\in\Z^d .\
\end{equation*}
Translation invariance yields $g(x,y)=g(0,y-x)$. It follows from \cite[Theorem~1.5.4]{LawlerRW} that 
for any $d\geq 3$, there exist $c_g=c_g(d)>0$ and $C_g = C_g(d)<\infty$ such that 
\begin{equation}\label{Green_decay}
 c_g \cdot (|x-y|+1)^{2-d} \leq  g(x,y) \leq C_g \cdot (|x-y|+1)^{2-d}, \quad \text{for $x,y\in\Z^d$.}
\end{equation}

\bigskip

\noindent
The equilibrium measure of $K \subset \subset \Z^d$ is defined by 
\begin{equation*}
 e_K(x) = 
\left\{
\begin{array}{l}
P_x\big[\widetilde H_K = \infty\big],\quad x\in K ,\\
0,\quad x\notin K .\
\end{array}
\right.
\end{equation*}
The capacity of $K$ is the total mass of the equilibrium measure of $K$:
\begin{equation}\label{def:capacity}
 \mathrm{cap}(K) = \sum_x e_K(x) .\
\end{equation}
Since $\Z^d$ is transient ($d\geq 3$), 
for any $\emptyset\neq K\subset \subset \Z^d$, the capacity of $K$ is positive. 
Therefore, we can define for such $K$ the normalized equilibrium measure by 
\begin{equation}\label{def:normalizedequilibriummeasure}
\widetilde e_K(x) = e_K(x)/\mathrm{cap}(K) .\
\end{equation}

The following relations for $P_x[H_K<\infty]$ will be useful: 
for any $K\subset\subset \Z^d$ and $x\in\Z^d$, 
\begin{itemize}\itemsep1pt
\item[(i)](see, e.g. \cite[(1.8)]{SznitmanAM})
\begin{equation}\label{eq:hittingprobability:equality}
P_x[H_K<\infty] = \sum_{y\in K} g(x,y) e_K(y) ,\
\end{equation}
\item[(ii)](see \cite[(1.9)]{SznitmanAM})
\begin{equation}\label{eq:hittingprobability:bounds}
\sum_{y\in K} g(x,y) / \sup_{z\in K}\sum_{y\in K}g(z,y)
\leq 
P_x[H_K<\infty]
\leq 
\sum_{y\in K} g(x,y) / \inf_{z\in K}\sum_{y\in K}g(z,y) .\
\end{equation}
\end{itemize}

\subsection{Definition of random interlacements}\label{sec:randominterlacements}

Now we recall the definition of the interlacement point process from \cite[Section 1]{SznitmanAM}.
We consider the space of doubly infinite nearest-neighbor trajectories $W$:
\begin{equation}\label{def:W}
W = \big\{ w : \Z \rightarrow \Z^d, \; \;  w (n) \sim w (n+1), \;
n \in \Z, \; \;  \lim_{n \to \pm \infty} |w(n)|=\infty   \big\}.\\
\end{equation}
We endow $W$ with the $\sigma$-algebra
 $\mathcal{W}$ generated by the coordinate maps $X_n$, $n \in \Z$.

Consider the space $W^*$ of trajectories in $W$ modulo time shift
\begin{equation*}
W^* = W / \sim, \text{ where } w \sim w' \iff w(\cdot) = w'(\cdot + k) \text{ for some } k \in \Z. 
\end{equation*}
and denote by $\pi^*$ the canonical projection from $W$ to $W^*$ which assigns to each $w \in W$ the 
$\sim$-equivalence class $\pi^*(w)$ of $w$.
The map $\pi^*$ induces a $\sigma$-algebra on $W^*$ given 
by $\mathcal{W}^* = \{A \subset W^*~:~(\pi^*)^{-1}(A) \in \mathcal{W}\}$.

For $K \subset \subset \Z^d$, we denote by $W_K$ the set of trajectories in
$W$ that enter the set $K$, and denote by $W_K^*$ the image of $W_K$ under $\pi^*$.
Note that $W_K\in\mathcal W$ and $W_K^*\in\mathcal{W}^*$.

For any $w^* \in W^*$ and $u \in \R_+$  we call the pair $(w^*,u)$ a labeled trajectory.
The space of point measures on which one canonically defines  random interlacements is given by
\begin{equation}
\label{eq:omega}
\Omega = \left\{\omega = \sum_{i\geqslant 1} \delta_{(w^*_i,u_i)}~:~  w^*_i \in W^*, u_i \in \mathbb{R}_+ 
\mbox{ and } \forall \, K \subset \subset \Z^d, u \geq 0\; : \; \omega(W^*_K \times [0,u]) < \infty \right\}.
\end{equation}
The space $\Omega$ is endowed with the $\sigma$-algebra $\mathcal{F}_{\Omega}$ generated by the evaluation
 maps of form $\omega \mapsto \omega(D)$
for $D \in \mathcal{W}^* \otimes \mathcal{B}(\R_+)$.
We recall the definition of the measure $Q_K$ on $(W, \mathcal{W})$ from \cite[(1.24)]{SznitmanAM}:
for any $A,B \in \mathcal{W}_+$ and $x \in \Z^d$ let
\begin{equation}\label{def_Q_K}
Q_K[(X_{-n})_{n\geq 0} \in A,~ X_0=x,~ (X_n)_{n \geq0} \in B]=
P_x[A \,|\, \widetilde{H}_K=\infty] 
\cdot e_K(x) \cdot P_x[B].
\end{equation}
According to \cite[Theorem 1.1]{SznitmanAM}, 
there exists a unique $\sigma$-finite measure $\nu$ on $(W^*,\mathcal{W}^*)$
which satisfies the identity
\begin{equation}\label{def_eq_nu}
\nu(E) = Q_K[ (\pi^*)^{-1}(E)],\quad
\text{for all $K \subset \subset \Z^d$ and $E \in \mathcal{W}^*$ with $E \subseteq W^*_K$.}
\end{equation}

\medskip

The \emph{interlacement point process}  is
 the Poisson point process on $W^* \times \mathbb{R}_+$
with intensity measure $\nu(\mathrm{d}w^*) \mathrm{d}u$, defined on the  probability space $(\Omega, \salgebra, \mathbb{P})$. 
Given $\omega = \sum_{i \geq 1} \delta_{(w^*_i,u_i)}  \in \Omega$ and $u\geq 0$, 
the {\it random interlacement at level $u$} is the random subset of $\Z^d$
 defined by
\begin{equation}\label{def_eq_interlacement_at_level_u}
\begin{array}{l}
\mathcal{I}^u(\omega) = 
\bigcup\limits_{ i \geq 1, \, u_i < u} {\rm range}(w^*_i)\,,
\end{array}
\end{equation}

\noindent
where ${\rm range}(w^*) = \{ w(n)\, :\, n \in \Z \}$
 for any  $w \in \pi^{-1}(w^*)$. The {\it vacant set at level $u$} is defined as
\begin{equation*}
\mathcal{V}^u(\omega) = \Z^d \setminus \mathcal{I}^u(\omega), \quad \;\mbox{for} \;\omega \in \Omega, \,u \ge 0\,.
\end{equation*}
For the sake of consistency, we mention that the law of $\mathcal I^u$ is uniquely characterized by \eqref{def:Iu}, 
see \cite[Proposition~1.5 and Remark~2.2 (2)]{SznitmanAM}. 

\subsection{Discrete interlacement local times}\label{subsection_local_times}

In this section we define the interlacement local time field $\local^u(\omega)$ at level $u$, 
which counts the accumulated number of 
visits of the interlacement trajectories with label smaller than $u$
 to each vertex $x \in \Z^d$, see \eqref{def_eq_local_time_at_level_u}.
We introduce this notion so that we can control the number of excursions of 
the interlacement trajectories inside a box in Section~\ref{sec:proofthm}.

\medskip

We denote by $\genl$ a generic element of the product space $\loczd$. 
For any $x \in \Z^d$, denote by $\Psi_x: \loczd \to \N $ the 
canonical coordinate function defined by $\Psi_x(\genl) =\genl(x)$. 
We consider the measurable space $(\loczd, \loczdsig)$ where  $\loczdsig$ is the 
$\sigma$-algebra generated by the functions $\Psi_x$, $x \in \Z^d$. 
For $\genl, \genl' \in \loczd$, we say that $ \genl \leq \genl'$ if 
$\genl(x)\leq \genl'(x)$ for all $x\in\Z^d$. 
We say that an event $A \in \loczdsig$ is \emph{increasing} if for any 
$\genl, \genl' \in \loczd$
the conditions $\genl \in A$ and $\genl \leq \genl'$ imply $\genl' \in A$.

\noindent
Given $\omega= \sum_{i \geq 1} \delta_{(w^*_i,u_i)} \in \Omega$ and $u\geq 0$, we define
the discrete interlacement local time profile at level $u$, 
$\local^u(\omega)= \left( \local^u_x(\omega)~:~x \in \Z^d\right)$ as
\begin{equation}\label{def_eq_local_time_at_level_u}
\local^u_x(\omega)=  \sum_{ i \geq 1, \, u_i < u} \; \sum_{n \in \Z} \mathds{1}_{\{ w_i(n)=x \}}, \quad x\in\Z^d ,\
\end{equation}
where $w_i$ is any particular element of $\pi^{-1}(w^*_i)$.
Note that the function $\local^u~:~ (\Omega,\salgebra) \to (\loczd, \loczdsig)$ is measurable and that
$x \in \mathcal{I}^u(\omega)$ if and only if $\local^u_x(\omega) \geq 1$.

Given a measurable function $\local~:~ (\Omega,\salgebra) \to (\loczd, \loczdsig)$ 
and an event $A \in \loczdsig$, we define 
\begin{equation}\label{def:Au}
\text{$A(\local) = \{\omega \in \Omega ~:~ \local(\omega) \in A\}~~$ and $~~A^u = A(\local^u)$ for $u\geq 0$.}
\end{equation}
It follows from \eqref{def_eq_local_time_at_level_u} that for any $0\leq u \leq u'$,  
$\P[ \local^u \leq \local^{u'} ]=1$. 
Therefore, for any increasing event $A \in \loczdsig$ and $u \leq u'$, we have
\begin{equation}\label{eq:stochdomination}
\P[A^u] \leq \P[A^{u'}].
\end{equation}

\medskip

Finally, we record that for $x\in\Z^d$ and $u\geq 0$, 
\begin{equation}\label{eq:meanlocal}
\mathbb E[\local^u_x] = u .\
\end{equation}
Indeed, 
by \eqref{def_Q_K} and \eqref{def_eq_nu}, 
$\mathbb E[\local^u_x] = \mathbb E [ \omega(W^*_{\{x\}}\times [0,u) ) ]\cdot g(x,x) = \capa(\{x\}) \cdot u\cdot g(0,0) = u$.

\subsection{Cascading events}

In this section we adapt some results of \cite{Sznitman:Decoupling} 
to our setting which involves increasing events of $\loczd$. 
The result of Lemma~\ref{l:probability:cascading} below is new, but very similar to \cite[Corollary~3.5]{Sznitman:Decoupling}, 
which is stated for increasing events in $\{0,1\}^{G\times\Z}$, where $G$ is an infinite, connected, bounded degree weighted graph, 
satisfying certain regularity conditions (for example, $G = \Z^{d-1}$, with $d\geq 3$). 
We will use Lemma~\ref{l:probability:cascading} in the proof of Lemma~\ref{l:bad:*paths}.

\bigskip

We begin with the definition of uniformly cascading events. We adapt
 \cite[Definition~3.1]{Sznitman:Decoupling} to our setting which involves local times. 
\begin{definition}\label{def:cascading}
Let $\lambda>0$. 
We say that a family $\mathcal{G} = (G_{x,L,R})_{x\in \Z^d, L\geq 1,R\geq 0 }$ of events on $(\loczd, \loczdsig)$ 
{\it cascades uniformly} (in $R$) with complexity at most $\lambda>0$ if
there exists $C(\lambda)<\infty$ such that 
\begin{equation*}
\text{$G_{x,L,R}$ is $\sigma(\Psi_{y}, y\in \ballZ(x,10L))$-measurable 
for each $x\in \Z^d$, $R\geq 0$, and $L\geq 1$,} 
\end{equation*}
 and for each $l$ multiple of $100$, $x\in \Z^d$, $R\geq 0$, $L\geq 1$, there exists 
$\Lambda\subseteq \Z^d$ such that 
\begin{equation}\label{eq:cascading:2}
\Lambda\subseteq \ballZ(x,9lL) ,\
\end{equation}
\begin{equation}\label{eq:cascading:3}
|\Lambda| \leq C(\lambda)\cdot l^\lambda ,\
\end{equation}
\begin{equation}\label{eq:cascading:4}
G_{x,lL,R} \subseteq
\bigcup_{x_1,x_2\in\Lambda~:~|x_1-x_2|\geq \frac{l}{100}L}
G_{x_1,L,R}\cap G_{x_2,L,R} .\
\end{equation}
\end{definition}

\begin{lemma}\label{l:probability:cascading}
Let $\mathcal G= (G_{x,L,R})_{x\in \Z^d, L\geq 1, R\geq 0 }$ be a family 
of increasing events on $(\loczd, \loczdsig)$ cascading uniformly (in $R$) with complexity at most $\lambda>0$. 
\begin{equation}\label{def:scalesLn}
\text{Let $L_0 \geq 1$, $l_0$ large enough multiple of $100$, and $L_n = l_0^n L_0$.}
\end{equation}
Let $u_{L_0} = {L_0}^{2-d}$, and recall the notation of \eqref{def:Au}.
If 
\begin{equation}\label{eq:probability:cascading:1}
\inf_{R \geq 0,\,  L_0 \geq 1}~
\sup_{x\in \Z^d}~
\mathbb P\left[G_{x,L_0,R}^{u_{L_0}}\right] = 0 ,\
\end{equation}
then there exist $l_0>1$, $R\geq 0$, $L_0 \geq 1$ and $u>0$ such that 
\begin{equation}\label{eq:probability:cascading:2}
\sup_{x\in\Z^d} \mathbb P\left[G_{x,L_n,R}^u \right]
\leq 2^{-2^n} ,\quad \text{for all $n\geq 0$} .\
\end{equation}
\end{lemma}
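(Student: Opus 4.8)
The plan is to run a renormalization (multiscale) argument in the spirit of \cite[Section~3]{Sznitman:Decoupling}, adapted to the local-time setting. Fix notation as in \eqref{def:scalesLn}: scales $L_n = l_0^n L_0$ with $l_0$ a large multiple of $100$ to be chosen, and set $u_{L_0}=L_0^{2-d}$. Write $p_n(u) = \sup_{x\in\Z^d}\P[G_{x,L_n,R}^u]$ (suppressing the dependence on $l_0,R,L_0$ from the notation). The heart of the argument is a recursive inequality: using the cascading property \eqref{eq:cascading:4}, for each $x$ there is $\Lambda\subseteq\ballZ(x,9l_0L_n)$ with $|\Lambda|\le C(\lambda)l_0^\lambda$ such that $G_{x,L_{n+1},R}$ is contained in a union over pairs $x_1,x_2\in\Lambda$ with $|x_1-x_2|\ge \frac{l_0}{100}L_n$ of the intersection $G_{x_1,L_n,R}\cap G_{x_2,L_n,R}$. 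Since the events are increasing in the local-time field and are $\sigma(\Psi_y,\,y\in\ballZ(x_i,10L_n))$-measurable, the two factors depend on interlacement local times in two boxes of radius $10L_n$ that are at mutual distance of order $l_0 L_n$; when $l_0$ is large these boxes are well separated. The key step is therefore to invoke the decoupling inequality — this is exactly what Lemma~\ref{l:probability:cascading} is built on (the local-time analogue of \cite[Corollary~3.5]{Sznitman:Decoupling}, whose proof sketch the paper defers to the Appendix) — to obtain, after a small sprinkling of the level $u$,
\begin{equation*}
\P\big[G_{x_1,L_n,R}^{u}\cap G_{x_2,L_n,R}^{u}\big] \le \P\big[G_{x_1,L_n,R}^{u(1+\varepsilon_n)}\big]\cdot \P\big[G_{x_2,L_n,R}^{u(1+\varepsilon_n)}\big] + (\text{error}),
\end{equation*}
with $\varepsilon_n$ summable (e.g. $\varepsilon_n \sim c\,l_0^{-(n)}$, so that $\prod_n(1+\varepsilon_n)<\infty$), and error terms that are negligible once $L_0$ is large.

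Combining this with the union bound over the at most $|\Lambda|^2\le C(\lambda)^2 l_0^{2\lambda}$ pairs gives the recursion
\begin{equation*}
p_{n+1}(u_{n+1}) \le C(\lambda)^2\, l_0^{2\lambda}\, p_n(u_n)^2 + (\text{error}),
\end{equation*}
where $u_{n+1}=u_n(1+\varepsilon_n)$ and $u_0 = u$ is taken small. Writing $a_n = C(\lambda)^2 l_0^{2\lambda} p_n(u_n)$ this is essentially $a_{n+1}\le a_n^2$, which iterates to $a_n \le a_0^{2^n}$, i.e. $p_n(u_n)\le 2^{-2^n}$ provided $a_0 \le 1/2$. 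To start the induction one uses hypothesis \eqref{eq:probability:cascading:1}: it guarantees that we may choose $R\ge 0$ and $L_0\ge 1$ so that $\sup_x \P[G_{x,L_0,R}^{u_{L_0}}]$ is as small as we like, in particular small enough that $C(\lambda)^2 l_0^{2\lambda}\, p_0(u_0)\le 1/2$ for the (now fixed) $l_0$. Since $u_\infty := u\prod_{n\ge0}(1+\varepsilon_n) < \infty$ and all events are increasing, \eqref{eq:stochdomination} lets us replace every $u_n$ by the single level $u := u_\infty>0$ in the final bound, yielding \eqref{eq:probability:cascading:2}.

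A few bookkeeping points need care. First, the order of quantifiers in \eqref{eq:probability:cascading:1}: the infimum is over all $R\ge0$ and $L_0\ge1$ simultaneously, so for any target $\delta>0$ we can pick a single pair $(R,L_0)$ with $\sup_x\P[G_{x,L_0,R}^{u_{L_0}}]<\delta$; the choice of $l_0$ (hence of $\lambda$-dependent constants and of $\varepsilon_n$) must be made first, then $\delta$, then $(R,L_0)$. Second, one must check that the decoupling/sprinkling step applies at every scale with the \emph{same} geometric separation ratio $l_0$, which it does because the recursion rescales $L_n\mapsto L_{n+1}=l_0 L_n$ uniformly; the measurability radius $10L_n$ against separation $\frac{l_0}{100}L_n$ means we need $l_0$ at least, say, $10^4$, consistent with ``$l_0$ large enough multiple of $100$''. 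Third, because \eqref{eq:cascading:2} places $\Lambda$ inside $\ballZ(x,9l_0L_n)$ one must verify the translated events $G_{x_i,L_n,R}$ appearing in the union are genuinely of the same form at scale $L_n$ — which is immediate from the definition of the family $\mathcal G$ being indexed by all $x\in\Z^d$.

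The main obstacle I expect is the decoupling step itself: the random interlacement local time field has long-range dependence and no BK inequality, so the inequality $\P[G_{x_1}\cap G_{x_2}]\lesssim \P[G_{x_1}']\P[G_{x_2}']$ is not elementary — it is precisely the content of Lemma~\ref{l:probability:cascading}'s underlying tool, the local-time version of Sznitman's decoupling inequality, and getting the sprinkling parameter $\varepsilon_n$ to be summable while keeping the error terms (which typically decay like a power of $L_0$ or $l_0$) under control against the doubly-exponential target $2^{-2^n}$ is the delicate part. Everything else — the union bound, the quadratic recursion, the passage from $u_n$ to a fixed $u$ via monotonicity — is routine once that inequality is in hand.
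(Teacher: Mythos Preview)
Your approach is the same as the paper's: iterate the decoupling inequality (the local-time analogue of \cite[Theorem~2.1]{Sznitman:Decoupling}) along the cascading decomposition \eqref{eq:cascading:4} to obtain a quadratic recursion $a_{n+1}\le a_n^2$, and seed it via \eqref{eq:probability:cascading:1} after first fixing $l_0$.

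One correction is needed: the sprinkling direction in your recursion is reversed. The decoupling inequality bounds the intersection at level $u'$ by the product at a \emph{larger} level $u>u'$, so in the iteration the level used at scale $n+1$ must be \emph{smaller} than that at scale $n$. Concretely, the seed at scale $0$ sits at the top level $u_0=u_{L_0}$, and the conclusion \eqref{eq:probability:cascading:2} holds at the limiting level $u=u_\infty^-:=u_0\prod_{k\ge 0}(1+\varepsilon_k)^{-1}>0$. Your final step --- replacing every $u_n$ by $u_\infty=u_0\prod_k(1+\varepsilon_k)$ via \eqref{eq:stochdomination} --- goes the wrong way for increasing events, since $\P[G^{u_\infty}]\ge\P[G^{u_n}]$. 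A related minor point: with the choice $u_0=L_0^{2-d}$, the error term $\varepsilon(u_\infty^-,l_0,L_0)$ depends on $L_0$ only through the product $u_\infty^- L_0^{d-2}$, which is of order a constant depending on $l_0$ alone; hence the error is made small by choosing $l_0$ large (uniformly in $L_0$), not by choosing $L_0$ large as you wrote. This is precisely why $l_0$ must be fixed first --- the order of choices you correctly identified in your bookkeeping paragraph.
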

The proof of Lemma~\ref{l:probability:cascading} is essentially the same as the proof of \cite[Corollary~3.5]{Sznitman:Decoupling}. 
For completeness, we include its sketch in the Appendix.

\bigskip

\section{Coarse graining of $\Z^d$}\label{sec:coarsegraining}

In this section we show that when $u$ is small enough, 
the infinite connected component of $\mathcal V^u$ contains a ubiquitous infinite connected subset, 
which has a well-prescribed structure and useful properties. 
We do so by partitioning $\Z^d$ into large boxes. 
We then define a notion of good boxes in Definition~\ref{def:good}. 
These boxes are defined to be ``sufficiently vacant''. 
In Lemma~\ref{l:bad:*paths}, we show that large $*$-connected components
of bad boxes are unlikely, where 
we use Lemma~\ref{l:probability:cascading} to deal with the long-range correlations present in the model. 
We then combine it with the result of \cite[Lemma~2.23]{Kesten} 
on the connectedness of the exterior $*$-boundary of a $*$-connected finite subset of $\Z^d$ 
to obtain 
in Corollary~\ref{cor:bad:*paths} that 
there is a unique infinite connected subset of good boxes (denoted by $\mathcal G^\infty$ in Corollary~\ref{cor:bad:*paths} (2)), 
and all the remaining bad components are very small. 
It then follows from the definition of good boxes that the infinite connected component of 
good boxes contains the desired infinite connected subset of $\mathcal V^u$ 
(see Corollary~\ref{cor:bad:*paths} (3)). 
An important consequence of Corollary~\ref{cor:bad:*paths}, which we will use in the proof of Theorem~\ref{thm:sts:interlacement} 
(see \eqref{eq:HN:proba} and \eqref{eq:Skproperty}), 
is that with high probability, any long nearest-neighbor path in $\Z^d$ will get within distance $R$ from the above defined 
infinite connected subset of $\mathcal V^u$ many times.

\subsection{Setup and auxiliary results}\label{sec:setup}

We consider the hypercubic lattice $\Z^d$ with $d\geq 3$. For an integer $R\geq 0$, let 
\begin{equation}\label{eq:ZZ}
\ZZ = (2R+1)\cdot \Z^d .\
\end{equation}
We say that $x',y'\in\ZZ$ are (1) nearest-neighbors in $\ZZ$, if $|x'-y'|_1 = 2R + 1$, and
(2) $*$-neighbors in $\ZZ$, if $|x'-y'| = 2R+1$. 
We denote by 
$\ballZZ(x',N) = \ballZ(x',(2R+1)N)\cap\ZZ$ the closed ball of radius $N$ in $\ZZ$.
The interior boundary  of $\mathbf K \subseteq \ZZ$, denoted by  $\intbb \mathbf K$, is
the set of vertices of $\mathbf K$ that have some nearest neighbor in $\ZZ \setminus \mathbf K$.
Note that for $R\neq 0$, the set $\intbb \mathbf K$ is different from $\intb \mathbf K$, 
defined in Section \ref{subsection_basic_notation}.

With each vertex $x'\in \ZZ$, we associate the hypercube 
\begin{equation}\label{def:cube}
\text{$\cube(x') = \ballZ(x',R) \subset\subset \Z^d$}. 
\end{equation}
This gives us a partition of $\Z^d$ into disjoint hypercubes. 
\begin{definition}\label{def:edges}
Let $\edges$ be the subset of vertices in $\cube(0)$ such that at least two of their coordinates 
have values in the set $\{-R,-R+1,-R+2,R-2,R-1,R\}$, and let $\edges(x') = x' + \edges$, for all $x'\in \ZZ$. 
We call $\edges(x')$ the {\it frame} of $\cube(x')$. 
\end{definition}
\begin{figure}
\begin{center}
\psfragscanon
\includegraphics[height=5cm]{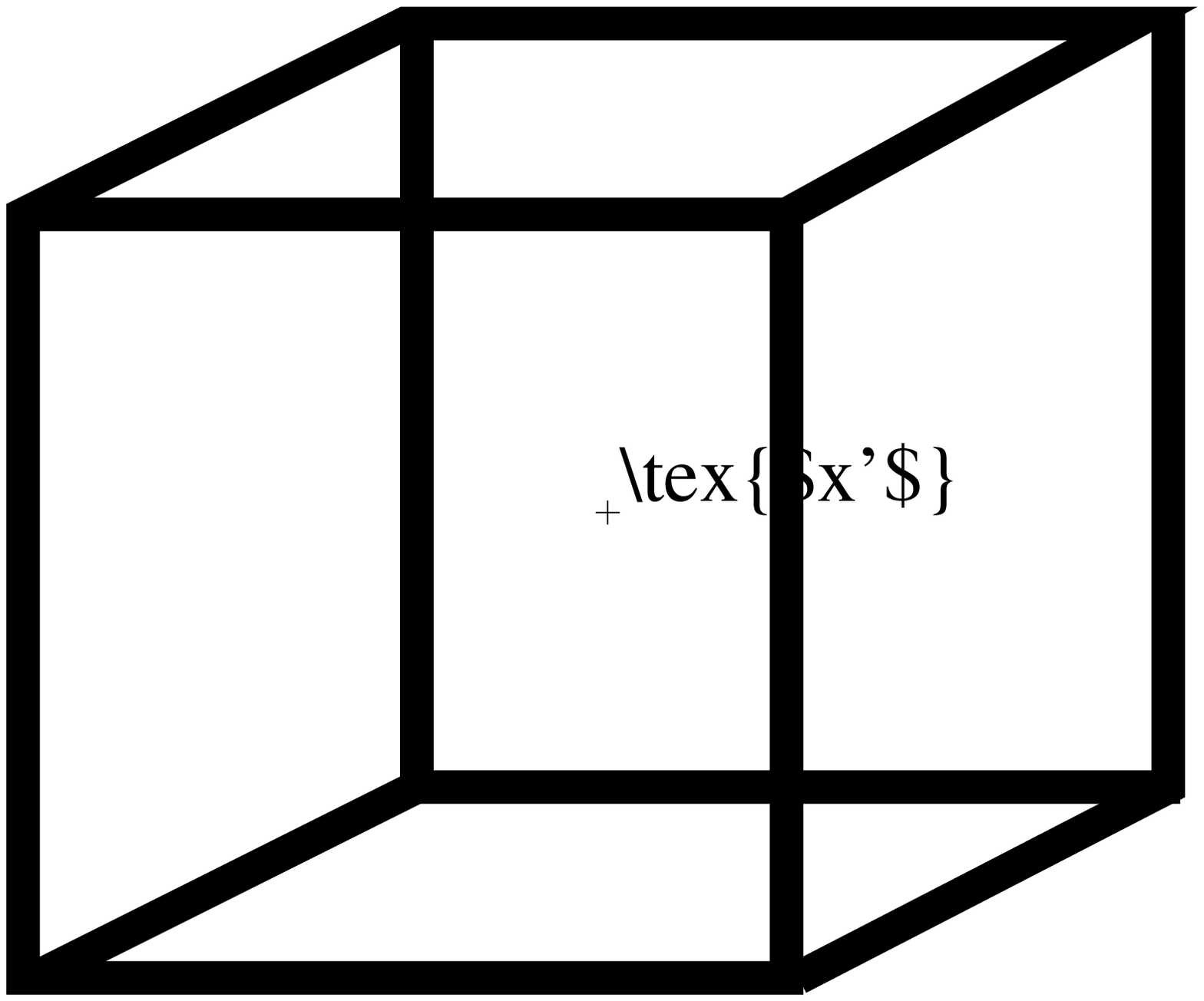}
\caption{The frame of $\cube(x')$ in $\Z^3$.}
\end{center}
\end{figure}
Note that the set $\edges$ is connected in $\Z^d$, and for any $x_1',x_2'\in\ZZ$ nearest-neighbors in $\ZZ$, 
the set $\edges(x_1')\cup\edges(x_2')$ is connected in $\Z^d$. 
 
In the case $d=3$, the set $\cube(x')$ is the usual cube, and the set $\edges(x')$ is just 
the $2$-neighborhood of its edges in the sup-norm, restricted to the vertices inside $\cube(x')$. 

\begin{lemma}\label{l:capaedges}
There exists $C = C(d)<\infty$ such that for all $R \geq 2$, 
\begin{equation}\label{eq:capaedges}
\capa(\edges) \leq C R^{d-2} / \log R ~.\
\end{equation}
\end{lemma}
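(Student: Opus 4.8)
The plan is to estimate $\capa(\edges)$ by bounding the hitting probability $P_x[H_\edges < \infty]$ from below, uniformly over $x$ ranging over a set that carries most of the equilibrium mass, and then invoke the variational/summation identities for capacity. Concretely, recall from \eqref{eq:hittingprobability:bounds} that for any finite $K$,
\[
\capa(K) \;\leq\; \frac{\sum_{y\in K} g(x,y)}{\inf_{z\in K}\sum_{y\in K} g(z,y)}\quad\text{is \emph{not} quite what we want};
\]
instead the cleaner route is the classical bound $\capa(K)\le \sum_{y\in K} g(0,y-x)/P_x[H_K<\infty]$ applied with a well-chosen reference point, or, even more directly, the identity $\capa(K) = \sum_x e_K(x)$ combined with the fact that $e_K(x) \le P_x[\widetilde H_K = \infty]$ and a lower bound on the return probability. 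The key point is geometric: $\edges$ is (up to constants) a union of $O(1)$ ``thick slabs'' of the form (two coordinates pinned to a window of width $6$, the remaining $d-2$ coordinates free in a window of size $2R$), so it looks like a $(d-2)$-dimensional piece of $\Z^{d-2}$ thickened in two directions. A random walk started anywhere near such a slab hits it with probability of order $1/\log R$ when $d=3$ (where the two free... wait, $d-2=1$) — more precisely, the relevant heuristic is that a $(d-2)$-dimensional set of linear size $R$ sitting inside $\Z^d$ has capacity of order $R^{d-2}$ when $d-2\ge 3$, of order $R/\log R$ when $d-2 = 1$ effectively, and the logarithmic correction $1/\log R$ appearing in \eqref{eq:capaedges} is exactly the two-dimensional-type correction coming from the codimension-$2$ nature of the slab. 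So the target bound $CR^{d-2}/\log R$ is the ``codimension 2'' capacity estimate.

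The steps I would carry out, in order. First, decompose $\edges$ as a union of at most $\binom{d}{2}\cdot 36 = O(1)$ sets $S_{i,j,a,b}$, each of the form $\{x\in\cube(0): x_i = a,\ x_j = b\}$ with $a,b \in \{-R,\dots,-R+2,R-2,\dots,R\}$; by subadditivity of capacity, $\capa(\edges) \le \sum \capa(S_{i,j,a,b})$, so it suffices to bound each $\capa(S_{i,j,a,b})$ by $C R^{d-2}/\log R$. Second, for a fixed such slab $S = \{x_1 = 0 = x_2,\ |x_k|\le R\ (k\ge 3)\}$ (WLOG after translating/relabeling), bound its capacity. Use the hitting-probability route: by \eqref{eq:hittingprobability:bounds},
\[
\capa(S) \;\le\; \frac{\sum_{y\in S} g(0,y)}{\inf_{z\in S}\sum_{y\in S} g(z,y)}.
\]
The numerator is $\sum_{y\in S} g(0,y) \asymp \sum_{|y'|\le R,\, y'\in\Z^{d-2}} (|y'|+1)^{2-d}$ by \eqref{Green_decay}, which is $\asymp R^{d-2}\cdot R^{2-d}\cdot(\text{number of terms})$... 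I should compute: $\sum_{|y'|\le R} (|y'|+1)^{2-d}$ over $y'\in\Z^{d-2}$ behaves like $\int_0^R r^{d-3} r^{2-d}\,dr = \int_0^R r^{-1}\,dr \asymp \log R$. So the numerator is $\Theta(\log R)$. For the denominator, I need a uniform \emph{lower} bound $\inf_{z\in S}\sum_{y\in S} g(z,y) \ge c\,R^{2-d}\log R$: for $z\in S$ near the center this is again $\Theta(\log R)$, but near the boundary of the slab it could be as small as a constant times... actually $\sum_{y\in S}g(z,y) \asymp \sum_{y'\in\Z^{d-2},\,|y'|\le R}(|y'-z'|+1)^{2-d}$, and since $z'$ ranges over a box of side $2R$ in $\Z^{d-2}$, the worst case (corner) still gives $\asymp \int_1^{R}r^{-1}\,dr \asymp \log R$ (the $(d-2)$-dimensional ball of radius $R$ around a corner still contains a full $(d-2)$-dimensional ball of radius $\asymp R$). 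So $\inf_{z\in S}\sum_{y\in S}g(z,y) \asymp R^{2-d}\log R$. Wait — that gives $\capa(S) \le C\log R / (c R^{2-d}\log R) = C R^{d-2}$, which is \emph{off by a factor $\log R$}. So the crude subadditive/hitting-probability bound is not enough; I need to be more careful.

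The main obstacle is precisely recovering the extra $1/\log R$ gain, and here is how I would get it. The point is that $\edges$ is not just a $(d-2)$-dimensional slab but has a genuine two-dimensional ``thickness'' only of size $O(1)$ in the two pinned directions — however, crucially, it is not a single slab: $\edges$ consists of the union over the $O(1)$ choices of $(a,b)$, and near any given vertex $x\in\edges$, within distance $R$ the walk will typically traverse the \emph{thin} directions and the relevant object to compare with is a codimension-$2$ set. The right tool is the known capacity asymptotics for codimension-$2$ boxes: a $(d-2)$-dimensional discrete box of side $R$ embedded in $\Z^d$ has capacity $\asymp R^{d-2}/\log R$ (this is the analogue of the fact that a line segment of length $R$ in $\Z^3$ has capacity $\asymp R/\log R$, a classical result). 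So rather than the crude bound above, I would either (a) cite/reprove this codimension-$2$ box capacity estimate directly — e.g. via the Dirichlet-form variational principle $\capa(S) = \min\{\mathcal E(f,f): f|_S = 1,\ f\to 0\}$, plugging in the test function $f(x) = \min(1,\, \log(R/|x^{\perp}|)/\log R)$ where $x^\perp$ is the projection onto the two pinned coordinates, whose Dirichlet energy computes to $\asymp R^{d-2}/\log R$ — or (b) get the sharper denominator bound using the local central limit theorem / last-exit decomposition to show $\sum_{y\in S}g(z,y)$ has a \emph{logarithmically larger} effective normalization because of multiple returns. Approach (a) is cleanest: I would set up the variational bound, choose the logarithmic cutoff test function adapted to the $2$-dimensional transverse geometry, check it equals $1$ on $\edges$ (using $|x^\perp|\le 3\sqrt2$ there, so $\log(R/|x^\perp|)/\log R$ is close to $1$ — a minor adjustment of the cutoff handles the ``close to'' vs ``equal to $1$''), and compute $\mathcal E(f,f) = \frac{1}{2d}\sum_{x\sim y}(f(x)-f(y))^2 \asymp \sum_{x\in\cube(0)} |\nabla f(x)|^2$, where $|\nabla f(x)| \asymp 1/(|x^\perp|\log R)$ for $|x^\perp|\lesssim R$ and $0$ otherwise; summing over the $\asymp R^{d-2}$ values of the free coordinates and $\asymp \sum_{1\le k\le R} k \cdot (k\log R)^{-2} \asymp (\log R)^{-2}\log R = (\log R)^{-1}$ over the transverse annuli gives $\mathcal E(f,f) \le C R^{d-2}/\log R$, as desired. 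The only genuinely delicate bookkeeping is the transverse sum and making the test function exactly admissible; everything else is routine. I expect the variational argument with the logarithmic test function to be the crux, and I would structure the proof of Lemma~\ref{l:capaedges} around it.
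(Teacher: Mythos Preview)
Your variational approach (the logarithmic test function in the two transverse directions, cut off in the free directions) is correct and yields the bound $\capa(S)\le C R^{d-2}/\log R$ for each codimension-$2$ slab, after which subadditivity over the $O(1)$ slabs gives \eqref{eq:capaedges}. So the proposal works, modulo bookkeeping (making the cutoff in the free directions explicit so that the Dirichlet energy is finite, and checking the boundary-layer contribution is lower order --- it is, of order $R^{d-2}(\log R)^{-2}$).

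The paper takes a different and shorter route that avoids both subadditivity and the variational principle. It works directly with the full set $\edges$, picks an \emph{external} reference point $x$ with $|x|=2R$, and uses
\[
\capa(\edges)=\sum_{y\in\edges}e_\edges(y)
\stackrel{\eqref{Green_decay},\eqref{eq:hittingprobability:equality}}{\le}
C R^{d-2}\,P_x[H_\edges<\infty]
\stackrel{\eqref{eq:hittingprobability:bounds}}{\le}
C R^{d-2}\cdot\frac{\sum_{y\in\edges}g(x,y)}{\inf_{z\in\edges}\sum_{y\in\edges}g(z,y)} .
\]
Since every $y\in\edges$ is at distance $\asymp R$ from $x$ and $|\edges|\asymp R^{d-2}$, the numerator is $O(1)$. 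For the denominator, the paper observes the purely geometric fact that for any $z\in\edges$ and $1\le k\le R$ there are at least $k^{d-3}$ points of $\edges$ at sup-distance $k$ from $z$, so $\sum_{y\in\edges}g(z,y)\ge c\sum_{k=1}^R k^{2-d}k^{d-3}=c\log R$. This gives $\capa(\edges)\le C R^{d-2}/\log R$ in three lines, using only the tools already introduced in the paper.

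Note also that the formula you wrote in your first attempt, $\capa(S)\le \sum_{y\in S}g(0,y)\big/\inf_{z\in S}\sum_{y\in S}g(z,y)$ with $0\in S$, is not a valid capacity bound (both numerator and denominator are $\asymp\log R$, while $\capa(S)\asymp R^{d-2}/\log R$); the paper's trick of taking the reference point \emph{outside} the set, at distance $\asymp R$, is exactly what turns \eqref{eq:hittingprobability:equality}--\eqref{eq:hittingprobability:bounds} into a useful capacity upper bound. Your variational argument circumvents this, but at the cost of constructing and estimating a test function; the paper's argument buys simplicity by exploiting the connectedness of $\edges$ at every scale rather than decomposing into slabs.
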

\begin{proof}[Proof of Lemma~\ref{l:capaedges}]
The proof easily follows from \eqref{Green_decay}, \eqref{def:capacity}, \eqref{eq:hittingprobability:equality}, 
and \eqref{eq:hittingprobability:bounds}.
Let $R\geq 2$. Take $x\in\Z^d$ with $|x| = 2R$. 
Note that for any $y\in\edges$, $R\leq |x-y|\leq 3R$. 
We have 
\[
\capa(\edges)
\stackrel{\eqref{def:capacity}}=
\sum_{y\in\edges} e_\edges(y)
\stackrel{\eqref{Green_decay},\eqref{eq:hittingprobability:equality}}\leq
C R^{d-2} \cdot P_x[H_\edges<\infty] 
\stackrel{\eqref{eq:hittingprobability:bounds}}\leq
C R^{d-2} \cdot 
\sum_{y\in \edges} g(x,y) / \inf_{z\in \edges}\sum_{y\in \edges}g(z,y)
.\
\]
By \eqref{Green_decay}, we get 
\[
\sum_{y\in \edges} g(x,y) \leq C R^{2-d} \cdot |\edges| 
\leq 
C R^{2-d} \cdot \binom{d}{2} \cdot 6^2 \cdot (2R+1)^{d-2} 
\leq C .\
\]
It remains to show that $\inf_{z\in \edges}\sum_{y\in \edges}g(z,y)\geq c\cdot \log R$. 
By the definition of $\edges$, for any $z\in\edges$ and any integer $1\leq k\leq R$, we have 
\[
\left|\{y\in\edges~:~|y - z| = k\}\right| \geq k^{d-3} .\
\]
Therefore, uniformly in $z\in\edges$, we obtain
\[
\sum_{y\in \edges}g(z,y) 
\geq 
\sum_{k=1}^R ~\sum_{y\in\edges~:~|y - z| = k} g(z, y)
\stackrel{\eqref{Green_decay}}\geq 
\sum_{k=1}^R c \cdot k^{2-d}\cdot k^{d-3}
\geq c\cdot \log R .\
\]
Putting all the bounds together we get \eqref{eq:capaedges}. 
\end{proof}

\subsection{Good vertices}

\begin{definition}\label{def:good}
Let $\genl \in \loczd$. We say that $x'\in\ZZ$ is {\it $R$-good} for $\genl$ if 
\begin{enumerate}\itemsep1pt
\item[(1)]
$\genl(x) = 0$ for all $x\in\edges(x')$,
\item[(2)]
$\sum_{x\in\dcube(x')} \genl(x) \leq R^{d-1}$. 
\end{enumerate}
If $x'$ is not $R$-good, then we call it {\it $R$-bad} for $\genl$. 
\end{definition}
\begin{remark}
The choice of $R^{d-1}$ on the right-hand side of (2) is quite arbitrary. 
Any function $f = f(R)$ which grows faster than linearly would serve our purposes (see the proof of Lemma~\ref{l:uRgood}). 
Condition (2) of Definition~\ref{def:good} will be important in Section~\ref{sec:proofthm},
where we use it to give an upper bound on the number of excursions of the interlacement trajectories inside
$\dcube(x')$.
  \end{remark}
Note that for any $R\geq 0$ and $x' \in \ZZ$, 
\begin{equation}\label{good_event_measurable}
\text{ the event $\{\text{$x'$ is $R$-good}\}$ is decreasing and 
$\sigma(\Psi_{y}, y \in \ballZ(x',R))$-measurable.} 
\end{equation}
\begin{lemma}\label{l:uRgood}
For $R\geq 1$, let $u_R = R^{2-d}$. Then 
\begin{equation*}
\mathbb P\left[
\text{$0$ is $R$-good for $\local^{u_R}$}
\right]
\to 1 ,\quad\text{as $R\to\infty$.}
\end{equation*}
\end{lemma}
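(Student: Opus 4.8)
\textbf{Proof plan for Lemma~\ref{l:uRgood}.}
The plan is to verify the two defining conditions of $R$-goodness separately, each failing with probability tending to $0$ as $R\to\infty$, and then conclude by a union bound. We work with the local time field $\local^{u_R}$ at level $u_R=R^{2-d}$, so that $0$ is $R$-good for $\local^{u_R}$ precisely when (1) $\local^{u_R}_x=0$ for all $x\in\edges$ and (2) $\sum_{x\in\dcube(0)}\local^{u_R}_x\le R^{d-1}$.

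First I would handle condition (1). Observe that $\local^{u_R}_x=0$ for every $x\in\edges$ is exactly the event $\{\mathcal I^{u_R}\cap\edges=\emptyset\}=\{\edges\subseteq\mathcal V^{u_R}\}$, since $x\in\mathcal I^u$ iff $\local^u_x\ge 1$. By the characterizing identity \eqref{def:Iu}, $\mathbb P[\mathcal I^{u_R}\cap\edges=\emptyset]=e^{-u_R\cdot\capa(\edges)}$. Now apply Lemma~\ref{l:capaedges}: for $R\ge 2$, $\capa(\edges)\le CR^{d-2}/\log R$, hence $u_R\cdot\capa(\edges)\le R^{2-d}\cdot CR^{d-2}/\log R = C/\log R \to 0$ as $R\to\infty$. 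Therefore $\mathbb P[\text{condition (1) fails}]=1-e^{-u_R\capa(\edges)}\to 0$.

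Next I would handle condition (2) by a first-moment (Markov inequality) argument. By \eqref{eq:meanlocal}, $\mathbb E[\local^{u_R}_x]=u_R=R^{2-d}$ for every $x$, so
\begin{equation*}
\mathbb E\Big[\sum_{x\in\dcube(0)}\local^{u_R}_x\Big]=|\dcube(0)|\cdot R^{2-d}\le C R^{d-1}\cdot R^{2-d}=C R,
\end{equation*}
using that $\dcube(0)=\intb\cube(0)$ is the interior boundary of a box of side $2R+1$, so $|\dcube(0)|\le C R^{d-1}$. By Markov's inequality,
\begin{equation*}
\mathbb P\Big[\sum_{x\in\dcube(0)}\local^{u_R}_x> R^{d-1}\Big]\le \frac{C R}{R^{d-1}}=C R^{2-d}\to 0\quad\text{as }R\to\infty,
\end{equation*}
since $d\ge 3$. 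Finally, combining the two estimates via a union bound,
\begin{equation*}
\mathbb P[\text{$0$ is $R$-bad for }\local^{u_R}]\le \big(1-e^{-C/\log R}\big)+C R^{2-d}\xrightarrow[R\to\infty]{}0,
\end{equation*}
which is the claim.

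I do not expect a genuine obstacle here: the lemma is a soft consequence of the capacity bound of Lemma~\ref{l:capaedges} (for part (1)) and the exact first-moment formula \eqref{eq:meanlocal} together with the crude volume bound $|\dcube(0)|\le CR^{d-1}$ (for part (2)). The only mild subtlety is to record that the event in part (1) is literally $\{\edges\subseteq\mathcal V^{u_R}\}$ so that \eqref{def:Iu} applies verbatim, and that the choice $u_R=R^{2-d}$ is exactly calibrated so that $u_R\capa(\edges)$ is of order $1/\log R$ rather than order $1$ — this is precisely where the logarithmic gain in Lemma~\ref{l:capaedges}, coming from the frame being an essentially $(d-3)$-dimensional skeleton rather than a full $(d-1)$-dimensional face, is used. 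The remark following Definition~\ref{def:good} already signals that any superlinearly growing bound in place of $R^{d-1}$ would work in part (2), which is consistent with the slack $CR^{2-d}$ we obtain.
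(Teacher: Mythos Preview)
Your proof is correct and follows essentially the same approach as the paper: split into the two defining conditions of $R$-goodness, handle (1) via \eqref{def:Iu} and the capacity bound of Lemma~\ref{l:capaedges}, and handle (2) via Markov's inequality together with \eqref{eq:meanlocal} and $|\dcube(0)|\le CR^{d-1}$. The paper's proof is line-for-line the same argument.
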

\begin{proof}[Proof of Lemma~\ref{l:uRgood}]
By the definition of $R$-good vertices, it suffices to prove that 
\[
\mathbb P\left[\edges\subseteq \mathcal V^{u_R}\right]\to 1
\quad \text{ and } \quad
\mathbb P\left[\sum_{x\in\dcube(0)} \local^{u_R}_x \leq R^{d-1}\right] \to 1,\qquad\text{as $R\to\infty$}.
\]
The first statement follows from Lemma~\ref{l:capaedges}. Indeed, 
\[
\mathbb P\left[\edges\subseteq \mathcal V^{u_R}\right] = e^{-u_R\cdot\capa(\edges)}
\stackrel{\eqref{eq:capaedges}}\geq e^{-c/\log R} \to 1 .\
\]
As for the second statement, by the Markov inequality, 
\[
\mathbb P\left[\sum_{x\in\dcube(0)} \local^{u_R}_x > R^{d-1}\right]
\leq 
R^{1-d}\cdot\sum_{x\in\dcube(0)} \mathbb E[\local^{u_R}_x]
=
R^{1-d}\cdot|\dcube(0)|\cdot\mathbb E[\local^{u_R}_0]
\stackrel{\eqref{eq:meanlocal}}\leq C\cdot u_R \to 0 .\
\]
This completes the proof of Lemma~\ref{l:uRgood}. 
\end{proof}

\bigskip

For $V_1,V_2 \subseteq \Z^d$ and $\genl \in \loczd$, we write ``$V_1 \leftrightarrow V_2$ 
 by a $*$-path in $\ZZ$ of $R$-bad vertices for $\genl$'', 
if there is a sequence $\pi=(x_1',\dots,x_n')$  in $\ZZ$ of $R$-bad vertices for $\genl$
such that  
\begin{equation}\label{eq:ZZpath}
x_1' \in V_1, \quad x_n' \in V_2, \qquad \forall \, 1\leq i \leq n-1\, : \, |x_{i+1}'-x_i'|=2R+1. 
\end{equation}
The next lemma proves that $*$-connected components of $R$-bad vertices for $\local^u$ in $\ZZ$ are small for 
large enough $R$ and small enough $u$. 
Then a standard relation between nearest-neighbor and $*$-connectivities implies the existence of 
a unique infinite connected component of $R$-good vertices (see Corollary~\ref{cor:bad:*paths}).

\begin{lemma}\label{l:bad:*paths}
There exist $R\geq 0$, $u_1>0$, $c=c(d)>0$ and $C=c(d)<\infty$ such that for all $u \leq u_1$ and $N\geq 1$, we have   
\begin{equation}\label{eq:connectivity:badvertices:z}
\mathbb P
\big[
\text{$0\leftrightarrow \intbb\ballZZ (0,N)$ by a $*$-path in $\ZZ$ of $R$-bad vertices for $\local^u$}
\big]
\leq C e^{-N^c} .\
\end{equation}
\end{lemma}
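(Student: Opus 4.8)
\textbf{Proof plan for Lemma~\ref{l:bad:*paths}.}

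The plan is to exhibit a family of increasing events on $(\loczd,\loczdsig)$ that ``detects'' the presence of a long $*$-path of $R$-bad vertices, verify that this family cascades uniformly with bounded complexity, and then invoke Lemma~\ref{l:probability:cascading}. First I would, for $x'\in\ZZ$, $L\geq 1$ and $R\geq 0$, define $G_{x,L,R}$ (with $x$ the closest point of $\Z^d$ to $x'$, say) to be the event that there exists a $*$-path in $\ZZ$ of $R$-bad vertices inside the box $\ballZ(x,10L)$ joining $\ballZZ(x',L/2)$ (roughly) to its complement — i.e., a $*$-crossing of an annulus of scale $L$ by $R$-bad vertices. By \eqref{good_event_measurable}, badness of $x'$ is $\sigma(\Psi_y, y\in\ballZ(x',R))$-measurable and \emph{increasing} in $\genl$ (being good is decreasing, hence being bad is increasing), so $G_{x,L,R}$ is an increasing event that is $\sigma(\Psi_y, y\in\ballZ(x,10L))$-measurable, as required by Definition~\ref{def:cascading}. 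The standard geometric fact (as in \cite{Sznitman:Decoupling}) that a $*$-crossing of a box of scale $lL$ forces two $*$-crossings of disjoint, well-separated sub-boxes of scale $L$ gives \eqref{eq:cascading:2}--\eqref{eq:cascading:4} with $\Lambda$ a set of $O(l^d)$ centers of scale-$L$ sub-boxes, so the complexity is at most $\lambda = d$.

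Next I would check hypothesis \eqref{eq:probability:cascading:1}: with $u_{L_0} = L_0^{2-d}$, I need $\sup_{x'}\P[G^{u_{L_0}}_{x',L_0,R}]\to 0$ as $R$ (or $L_0$) grows. A scale-$L_0$ $*$-crossing by $R$-bad vertices requires at least of order $L_0/R$ distinct bad boxes, so in particular at least one; a union bound over the $O((L_0/R)^d)$ boxes in the relevant region and translation invariance bounds this probability by $C(L_0/R)^d\cdot\P[\text{$0$ is $R$-bad for }\local^{u_{L_0}}]$. Here I should be a little careful about which level appears: by stochastic domination \eqref{eq:stochdomination} and monotonicity of $u_R = R^{2-d}$ versus $u_{L_0}=L_0^{2-d}$, and by choosing $R$ comparable to $L_0$ (say $L_0 = KR$ for a large fixed constant $K$), the relevant probability is bounded by $\P[\text{$0$ is $R$-bad for }\local^{u_R}]$ up to adjusting $K$, which tends to $0$ by Lemma~\ref{l:uRgood}. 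Thus \eqref{eq:probability:cascading:1} holds, and Lemma~\ref{l:probability:cascading} yields $l_0$, $R$, $L_0$ and $u>0$ with $\sup_{x'}\P[G^u_{x',L_n,R}]\leq 2^{-2^n}$ for the scales $L_n = l_0^n L_0$.

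Finally I would convert this into \eqref{eq:connectivity:badvertices:z}. If $0\leftrightarrow \intbb\ballZZ(0,N)$ by a $*$-path of $R$-bad vertices for $\local^u$, then for the unique $n$ with $L_n \leq (2R+1)N < L_{n+1}$ this path crosses a scale-$L_n$ annulus of $R$-bad vertices (covering $\ballZZ(0,N)$ by $O(1)$ translates of the scale-$L_n$ configuration and noting the path has diameter $\geq cL_n$), so the event in \eqref{eq:connectivity:badvertices:z} is contained in a union of $O(1)$ translates of $G^u_{\cdot,L_n,R}$; hence its probability is at most $C\cdot 2^{-2^n} \leq C e^{-N^c}$ since $2^n \geq c N^{c'}$ for suitable $c,c'>0$ (because $L_n = l_0^n L_0$ is polynomial in $N$, so $n\asymp \log N$ and $2^n\asymp N^{\log_2 l_0}$, which is more than enough — in fact this gives a bound of the form $e^{-N^c}$ with $c$ as small as we like but positive). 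For the small values of $N$ (below $L_0/(2R+1)$) the bound \eqref{eq:connectivity:badvertices:z} holds trivially by adjusting $C$. The main obstacle I anticipate is the bookkeeping in the cascading verification — precisely matching the annulus-crossing geometry to the combinatorial requirements \eqref{eq:cascading:2}--\eqref{eq:cascading:4} of Definition~\ref{def:cascading} and tracking the interplay between the radius parameter $R$ of the boxes and the level $u_{L_0}=L_0^{2-d}$ forced by Lemma~\ref{l:probability:cascading}, since $R$ plays a dual role (it fixes the coarse-graining scale \emph{and} enters the definition of good/bad through $u_R = R^{2-d}$); everything else is routine.
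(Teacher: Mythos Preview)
Your proposal is correct and follows essentially the same route as the paper: define annulus-crossing events by $R$-bad $*$-paths, check they are increasing, $\sigma(\Psi_y,y\in\ballZ(x,10L))$-measurable, and cascade with complexity $d$, then apply Lemma~\ref{l:probability:cascading}. The one place where the paper is cleaner is the seed estimate: it simply takes $L_0 = R$, so that $u_{L_0}=u_R$ exactly and $\ballZ(x,L_0)\cap\ZZ$ contains a \emph{single} coarse-grained vertex, whence $G_{x,L_0,R}^{u_{L_0}}$ is contained in the event that this one vertex is $R$-bad for $\local^{u_R}$ --- no union bound, no stochastic-domination comparison between $u_{L_0}$ and $u_R$, and Lemma~\ref{l:uRgood} applies directly; your $L_0=KR$ variant works too but is the detour you flagged as the ``main obstacle''.
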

\begin{proof}[Proof of Lemma~\ref{l:bad:*paths}]
First of all, note that the $\loczdsig$-measurable event 
\[
\big\{\ell~:~\text{$0\leftrightarrow \intbb\ballZZ (0,N)$ 
by a $*$-path in $\ZZ$ of $R$-bad vertices for $\ell$}\big\}
\]
is increasing. Therefore, 
it suffices to prove that there exist $R\geq 0$, $u>0$, $c>0$ and $C<\infty$ such that for all $N\geq 1$, 
\eqref{eq:connectivity:badvertices:z} holds. 
(Then, by \eqref{eq:stochdomination}, the result will hold for all $u'$ smaller than $u$.)

\medskip

For $x\in\Z^d$ and integers $R\geq 0$, $L\geq 1$, 
consider the events

\begin{equation}\label{def_eq_our_cascading_events}
G_{x,L,R} = 
\begin{cases}
\left\{\genl \in \loczd~:~
\begin{array}{c}
 \ballZ( x,L)  \leftrightarrow \ballZ( x,2L)^c    \\ 
\text{  by a $*$-path in $\ZZ$ of $R$-bad vertices for $\genl$} 
\end{array}
\right\}, \quad &\text{if } L \geq R,  \\
\loczd, \quad &\text{if } L<R.
\end{cases}
\end{equation}

\medskip

In order to prove \eqref{eq:connectivity:badvertices:z}, it suffices to show that 
there exist $L_0 \geq 1$, $l_0 >1$, $R\geq 0$ and $u>0$ such that 
\begin{equation}\label{eq:application:corollary37}
\mathbb P \left[G_{0,L_n,R}(\local^u) \right] \leq 2^{-2^n} ,\quad \text{for all $n\geq 0$} ,\ 
\end{equation}
where $L_n$ are defined in \eqref{def:scalesLn} 
(see also the notation in \eqref{def:Au}).
This will immediately follow from Lemma~\ref{l:probability:cascading}, as soon as we show that 
\begin{equation}\label{eq:application:corollary37:cascading}
\begin{array}{c}
\text{$(G_{x,L,R})_{x\in \Z^d, L\geq 1, R\geq 0 }$ is a family of increasing events}\\
\text{cascading uniformly with complexity at most $d$,}
\end{array}
\end{equation}
and that
the family of events $(G_{x,L,R})_{x\in \Z^d,L\geq 1, R\geq 0 }$ satisfies \eqref{eq:probability:cascading:1}. 

\medskip

We begin with the proof of \eqref{eq:application:corollary37:cascading}. 
The events $G_{x,L,R}$ are clearly increasing. 
For $L\geq R$, we have $\genl \in G_{x,L,R}$ if and only if 
 there exists a $*$-path $\pi' = (y_1',\ldots,y_n')$ in $\ZZ$ of $R$-bad vertices for $\genl$
 satisfying
 \begin{equation}\label{seed_event_iff}
 |y'_1-x| \leq L, \qquad 2L < |y'_n -x|, \qquad \forall \; 1 \leq i \leq n \; : \;
 |y'_i -x| \leq 2L + 2R +1.
 \end{equation}
Treating the cases $L \geq R$ and $L<R$ separately and using 
\eqref{good_event_measurable} and \eqref{seed_event_iff}, one can show that the event $G_{x,L,R}$ is
$\sigma(\Psi_{y}, y \in \ballZ(x,10L))$-measurable.
Let $l$ be a multiple of $100$, $x\in \Z^d$, $R\geq 0$, $L\geq 1$. Let 
\[
\Lambda = L\cdot\Z^d \cap \ballZ(x,3lL) .\
\]
The set $\Lambda$ immediately satisfies \eqref{eq:cascading:2} and \eqref{eq:cascading:3} (with $\lambda=d$), 
so we only need to check that $\Lambda$ satisfies \eqref{eq:cascading:4}. 
By \eqref{def_eq_our_cascading_events}, it is enough to consider the non-trivial
case $L \geq R$.

If $\genl \in G_{x,lL,R}$, then there exists a $*$-path $\pi' = (y_1',\ldots,y_n')$ in $\ZZ$ of $R$-bad
 vertices for $\genl$
 satisfying $|y'_1-x| \leq lL$ and
 $2lL < |y'_n -x|\leq 2lL + 2R +1 \leq 3lL$, so that we can find 
$x_1,x_2 \in \Lambda$ such that $|y'_1 -x_1| \leq L$, $|y'_n -x_2| \leq L$.
Note that $|x_1 - x_2| \geq lL - 2L > \frac{l}{100}L$. 
Moreover, the path $\pi'$ connects 
$\ballZ( x_i,L)$ to $\ballZ( x_i,2L)^c$ for $i\in\{1,2\}$. Thus 
$\genl \in G_{x_1,L,R} \cap G_{x_2,L,R}  $, which implies 
\eqref{eq:cascading:4} and hence \eqref{eq:application:corollary37:cascading}.

\medskip

\noindent
It remains to prove that $(G_{x,L,R})_{x\in \Z^d,L\geq 1, R\geq 0 }$ satisfies \eqref{eq:probability:cascading:1}. 
Let us choose $L_0=R$. By \eqref{seed_event_iff} and \eqref{eq:ZZpath} we have
\begin{equation*}
G_{x,R,R} \subseteq 
\bigcup_{x'\in \ballZ (x,R) \cap \ZZ }
\left\{\genl \in \loczd ~:~\text{$x'$ is $R$-bad for $\genl$}
\right\} .\
\end{equation*}
Since $|\ballZ (x,R) \cap \ZZ | =1$, the condition \eqref{eq:probability:cascading:1}
follows from Lemma~\ref{l:uRgood}. Thus we can apply  Lemma~\ref{l:probability:cascading} to infer
 \eqref{eq:application:corollary37}, which completes
the proof of Lemma~\ref{l:bad:*paths}.
\end{proof}

\bigskip

The following result states that there exists a ubiquitous infinite component of good vertices in $\ZZ$.
It is a consequence of Lemma~\ref{l:bad:*paths} and 
\cite[Lemma~2.23]{Kesten} 
about the connectedness of the exterior $*$-boundary of a $*$-connected subset of $\Z^d$.

\begin{corollary}\label{cor:bad:*paths}
Fix $R$, $u_1$, 
$c=c(d)>0$, and $C=C(d)<\infty$ as in Lemma~\ref{l:bad:*paths}. 
For all $u\leq u_1$, we have 
\begin{enumerate}
\item[(1)]
for all $n,N\geq 1$, 
\begin{equation}\label{eq:good:shell}
\mathbb P\left[
\begin{array}{c}
\text{$\ballZZ(0,N+n)\setminus\ballZZ(0,N)$ contains a set $\mathcal S\subset \ZZ$ such that}\\
\text{$\mathcal S$ is connected in $\ZZ$, each $x\in\mathcal S$ is $R$-good for $\local^u$, and}\\
\text{every $*$-path in $\ZZ$ from $\ballZZ(0,N+1)$ to $\intbb\ballZZ(0,N+n)$}\\
\text{intersects $\mathcal S$}
\end{array}
\right]\geq 
1 - C\cdot|\ballZZ(0,N+1)|\cdot e^{-n^c} ,\
\end{equation}
\item[(2)]
there exists a unique infinite connected component of $R$-good vertices for $\local^u$ in $\ZZ$, 
which we denote by $\mathcal G^\infty$, and for all $n\geq 1$, 
\begin{equation}\label{eq:Ginfty:ball}
\mathbb P\left[
\text{$\mathcal G^\infty$ contains a vertex in $\ballZZ(0,n)$}
\right]
\geq 1 - C\cdot \sum_{N\geq n} e^{-N^c} ,\
\end{equation}
\item[(3)]
the set $\bigcup_{x'\in \mathcal G^\infty}\edges(x')$ is an infinite connected subset of $\mathcal V^u$. 
\end{enumerate}
\end{corollary}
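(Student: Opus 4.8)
\textbf{Proof plan for Corollary~\ref{cor:bad:*paths}.}
The plan is to derive parts (1) and (2) from Lemma~\ref{l:bad:*paths} by standard planar-type duality arguments for $*$-connectivity (as in \cite[Lemma~2.23]{Kesten}), and then obtain part (3) from the definition of $R$-good vertices.

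First I would prove part (1). Fix $n,N\geq 1$ and consider the event that some $*$-path in $\ZZ$ from $\ballZZ(0,N+1)$ to $\intbb\ballZZ(0,N+n)$ consists only of $R$-bad vertices for $\local^u$. If no such ``bad crossing'' $*$-path exists, then a standard duality statement (the set of $R$-good vertices in the annulus $\ballZZ(0,N+n)\setminus\ballZZ(0,N)$ must contain a connected ``blocking'' set $\mathcal S$ separating the inner sphere from the outer sphere — here one uses that a $*$-path is blocked precisely by a nearest-neighbor-connected set in the dual sense, which is the content of \cite[Lemma~2.23]{Kesten}) yields the existence of the desired $\mathcal S\subset\ZZ$. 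So it suffices to bound the probability of a bad crossing $*$-path. By a union bound over the starting vertex $x'\in\ballZZ(0,N+1)$, and translation invariance, this is at most $|\ballZZ(0,N+1)|$ times the probability that $0\leftrightarrow\intbb\ballZZ(0,n)$ by a $*$-path in $\ZZ$ of $R$-bad vertices for $\local^u$ (any bad crossing $*$-path from $x'\in\ballZZ(0,N+1)$ reaching $\intbb\ballZZ(0,N+n)$ must, when re-centered at $x'$, traverse distance at least $n$ in $\ZZ$). By Lemma~\ref{l:bad:*paths} this is at most $C|\ballZZ(0,N+1)|e^{-n^c}$, giving \eqref{eq:good:shell}.

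Next, part (2). For uniqueness: two distinct infinite connected components of $R$-good vertices would have to be separated by an infinite $*$-connected set of $R$-bad vertices, contradicting Lemma~\ref{l:bad:*paths} (which, via Borel--Cantelli over $N$, shows that almost surely every $*$-connected component of $R$-bad vertices is finite); hence almost surely there is at most one infinite connected component $\mathcal G^\infty$. For existence and the quantitative bound \eqref{eq:Ginfty:ball}: applying part (1) with, say, $N$ ranging and $n$ comparable, the events in \eqref{eq:good:shell} produce, for every large $N$, a connected good ``shell'' $\mathcal S_N$ surrounding the origin at scale $\sim N$; consecutive such shells, being connected separating sets, must intersect (a shell at scale $N$ and one at scale $2N$ cannot be disjoint since the outer one must cross the inner one), so their union is an infinite connected set of $R$-good vertices, which therefore equals $\mathcal G^\infty$. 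On the event that the shell at the smallest relevant scale $\geq n$ is present, $\mathcal G^\infty$ meets $\ballZZ(0,n)$; the complementary probability is bounded by summing the error terms from \eqref{eq:good:shell}, namely $C\sum_{N\geq n}|\ballZZ(0,N+1)|e^{-(N)^c}$ — and after adjusting the constant $c$ (using that the polynomial factor $|\ballZZ(0,N+1)|$ is absorbed into $e^{-N^{c'}}$ for a slightly smaller exponent) this gives \eqref{eq:Ginfty:ball}.

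Finally, part (3). By Definition~\ref{def:good}(1), if $x'$ is $R$-good for $\local^u$ then $\genl(x)=\local^u_x=0$ for all $x\in\edges(x')$, i.e. $\edges(x')\subseteq\mathcal V^u$. Hence $\bigcup_{x'\in\mathcal G^\infty}\edges(x')\subseteq\mathcal V^u$. It is connected in $\Z^d$: if $x_1',x_2'\in\mathcal G^\infty$ are nearest-neighbors in $\ZZ$ then $\edges(x_1')\cup\edges(x_2')$ is connected in $\Z^d$ (remarked after Definition~\ref{def:edges}), and each individual frame $\edges(x')$ is connected, so connectedness propagates along any nearest-neighbor path in $\ZZ$ inside $\mathcal G^\infty$, which exists since $\mathcal G^\infty$ is connected in $\ZZ$. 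It is infinite because $\mathcal G^\infty$ is infinite and distinct vertices of $\ZZ$ have disjoint frames. The main obstacle is the duality bookkeeping in parts (1)--(2): one must carefully invoke \cite[Lemma~2.23]{Kesten} with the right notion (blocking a $*$-path by a nearest-neighbor-connected set, in the rescaled lattice $\ZZ$) and check that the ``shells'' at geometrically spaced scales genuinely overlap to build a single infinite connected good component; the rest is routine union bounds and Borel--Cantelli.
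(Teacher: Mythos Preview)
Your arguments for parts (1) and (3) are correct and match the paper's proof. The issues are in part (2), where both the existence and the uniqueness arguments have genuine gaps.

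\medskip

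\textbf{Existence.} Your mechanism for building an infinite good component --- ``consecutive such shells, being connected separating sets, must intersect (a shell at scale $N$ and one at scale $2N$ cannot be disjoint since the outer one must cross the inner one)'' --- is false. Two connected separating shells living in nested (or even overlapping) annuli need not intersect at all: the inner shell can sit entirely in the inner portion of its annulus and the outer shell entirely in the outer portion of its annulus, with a gap between them. Separation in the sense of \eqref{eq:good:shell} says that radial $*$-paths hit the shell; it says nothing about two shells hitting each other. So stacking shells does not by itself produce a single infinite connected good set. The paper instead obtains existence (and the bound \eqref{eq:Ginfty:ball}) from \eqref{eq:connectivity:badvertices:z} via a genuine \emph{planar duality} argument: restrict to a $2$-dimensional coordinate plane in $\ZZ$, where the absence of long bad $*$-paths forces an infinite nearest-neighbor good cluster (this is the classical $2$D duality, cf.\ the reference to \cite[Theorem~2.1]{RS:BP}). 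That step is what your sketch is missing.

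\medskip

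\textbf{Uniqueness.} Your claim that ``two distinct infinite connected components of $R$-good vertices would have to be separated by an infinite $*$-connected set of $R$-bad vertices'' is not justified and is not obvious in $d\ge 3$. It is true that the \emph{nearest-neighbor} exterior boundary of an infinite good component consists of bad vertices, but you would still need to show this boundary is $*$-connected (and infinite), which for infinite sets is not covered by the finite-set version of \cite[Lemma~2.23]{Kesten} you invoke. The paper takes a cleaner route that you already have the ingredients for: by Borel--Cantelli applied to \eqref{eq:good:shell}, shells $\mathcal S$ exist at all sufficiently large scales almost surely; any infinite connected good component contains a nearest-neighbor path to infinity, which (being also a $*$-path) must intersect each such shell; since the shell is itself connected and good, the component contains the shell; hence all infinite good components share the same shells and coincide. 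This is the argument the paper sketches as ``uniqueness follows from \eqref{eq:good:shell} and the Borel--Cantelli lemma,'' and it avoids the unproven separation claim.
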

\begin{proof}[Proof of Corollary~\ref{cor:bad:*paths}]
(1) Take $n,N\geq 1$. Let 
\[
\widetilde {\mathcal S} = \ballZZ(0,N)\cup\left\{x\in\ballZZ(0,N+n)~:~
\begin{array}{c}
\text{$x$ is connected to $\ballZZ(0,N+1)$ by a $*$-path}\\
\text{in $\ballZZ(0,N+n)$ of $R$-bad vertices for $\local^u$}
\end{array}
\right\} ,\
\]
and consider the exterior $*$-boundary of $\widetilde {\mathcal S}$ in $\ballZZ(0,N+n)$:
\[
\widehat {\mathcal S} = \left\{ y\in\ballZZ(0,N+n)\setminus\widetilde {\mathcal S}~:~
\text{$y$ is a $*$-neighbor in $\ZZ$ of some $x\in\widetilde {\mathcal S}$}
\right\} .\
\]
Note that every vertex in $\widehat {\mathcal S}$ is $R$-good. 
If $\widetilde {\mathcal S}\cap \intbb\ballZZ(0,N+n) = \emptyset$, then 
every $*$-path in $\ZZ$ from $\ballZZ(0,N+1)$ to $\intbb\ballZZ(0,N+n)$ intersects $\widehat {\mathcal S}$. 
Non-trivially, it was proved in \cite[Lemma~2.23]{Kesten} (see also a short proof in \cite[Theorem~4]{timar_boundary}) that 
if $\widetilde {\mathcal S}\cap \intbb\ballZZ(0,N+n) = \emptyset$, then 
$\widehat {\mathcal S}$ contains a {\it connected} component $\mathcal S$ in $\ZZ$ 
such that every $*$-path in $\ZZ$ from $\ballZZ(0,N+1)$ to $\intbb\ballZZ(0,N+n)$ intersects $\mathcal S$.
By translation invariance of $\local^u$ and \eqref{eq:connectivity:badvertices:z}, 
with $c = c(d)>0$ and $C = C(d)<\infty$ as in Lemma~\ref{l:bad:*paths}, 
and for all $n,N\geq 1$, we have 
\[
\mathbb P\left[
\begin{array}{c}
\text{$\ballZZ(0,N+1)$ is connected to $\intbb\ballZZ(0,N+n)$}\\
\text{by a $*$-path in $\ZZ$ of $R$-bad vertices for $\local^u$}
\end{array}
\right] \leq |\ballZZ(0,N+1)|\cdot C\cdot e^{-n^c} .\
\]
Together with the above observations, this implies the first statement of Corollary~\ref{cor:bad:*paths}. 

\medskip

(2) The existence of $\mathcal G^\infty$ as well as \eqref{eq:Ginfty:ball} follow from \eqref{eq:connectivity:badvertices:z} and 
 planar duality (see, e.g., the proof of \cite[Theorem~2.1]{RS:BP}). 
The uniqueness of $\mathcal G^\infty$ follows from \eqref{eq:good:shell} and the Borel-Cantelli lemma. 

\medskip

(3) The fact that $\cup_{x'\in \mathcal G^\infty}\edges(x')$ is an infinite connected subset of $\mathcal V^u$ follows from (2), Definition~\ref{def:edges} of $\edges$,
and Definition~\ref{def:good} of $R$-good vertices. 
\end{proof}

\bigskip

\section{Conditional independence for random interlacements}\label{sec:condindep}

In this section we prove (in Lemma~\ref{l:condindep}) 
that the behavior of the interlacement trajectories with labels at most $u$
inside a finite set $K$ 
is independent of their behavior outside of $K$, given the information about entrance and exit points 
of all the excursions into $K$ of all the interlacement trajectories with labels at most $u$. 
As part of the proof, we will also identify the conditional law of the excursions inside and outside $K$ 
(see \eqref{eq:EinnK:proba} and \eqref{eq:EoutK:proba}, respectively). 

We begin by introducing notation and recalling some properties of the interlacement point measures, 
which we will use to identify the above mentioned laws of excursions. 
We then properly define the excursions (in Section~\ref{sec:condindep:2})  
and the $\sigma$-algebras of events generated by excursions inside, outside, and on the boundary of $K$ 
(in Section~\ref{sec:condindep:3}). 
Finally, (in Section~\ref{sec:condindep:4}) we state and prove the conditional independence of 
the $\sigma$-algebras.

\subsection{More preliminaries about interlacements}\label{sec:condindep:1}

Recall the notation and the definition of the interlacement point process from Section~\ref{sec:randominterlacements}. 
Let $\omega = \sum_{i\geq 0}\delta_{(w^*_i,u_i)}$ be an interlacement point process on $W^*\times\R_+$. 
For $K\subset\subset\Z^d$ and $u>0$, let 
\begin{equation}\label{def:omegaKu:omega-omegaKu}
\omega_{K,u} = \sum_{i\geq 0}\delta_{(w^*_i,u_i)}\mathds{1}_{\{w^*_i\in W^*_K,\, u_i\leq u\}} 
\quad\mbox{and}\quad
\omega - \omega_{K,u} = \sum_{i\geq 0}\delta_{(w^*_i,u_i)}\mathds{1}_{\{w^*_i\notin W^*_K\}\cup\{u_i >  u\}} 
\end{equation}
be the restrictions of $\omega$ to the set of pairs $(w^*_i,u_i)$ with, respectively,  
$w^*_i$ intersecting $K$ and $u_i\leq u$, and   
either $w^*_i$ not intersecting $K$ or $u_i> u$. 
By the definition of $\omega$, the point measures $\omega_{K,u}$ and $\omega - \omega_{K,u}$ are independent Poisson point processes. 
By \eqref{eq:omega}, each $\omega_{K,u}$ is a finite point measure. 
For each $K\subset\subset\Z^d$ and $u>0$, 
$\omega_{K,u}$ is a Poisson point process on $W^*_K\times \R_+$ with intensity measure
\[
\mathds{1}_{W^*_K\times [0,u]}\cdot\nu(\mathrm{d} w^*) \mathrm{d}u ,\
\]
where the measure $\nu$ is defined in \eqref{def_eq_nu}. 
In particular, the total mass of $\omega_{K,u}$ has Poisson distribution with parameter $u\cdot \capa(K)$ 
(this follows from \eqref{def_Q_K} and \eqref{def_eq_nu}), and 
all the $u_i$'s in the definition of $\omega_{K,u}$ are almost surely different. 
Therefore, $\omega_{K,u}$ admits the following representation: 
\begin{equation}\label{eq:omegaKu}
\omega_{K,u} = \sum_{i=1}^{N_{K,u}}\delta_{(w^*_i,u_i)} ,
\end{equation}
where $N_{K,u}$ has  Poisson distribution with parameter $u\cdot \capa(K)$, and 
given $N_{K,u}$, (a) $(u_1,\ldots,u_{N_{K,u}})$ and $(w^*_1,\ldots,w^*_{N_K,u})$ are independent, 
(b) $u_1<\ldots<u_{N_{K,u}}$ are obtained by relabeling independent uniform random variables on $[0,u]$, 
(c) $w^*_i$ are independent and each distributed according to $\mathds{1}_{W^*_K}\cdot\nu(\mathrm{d} w^*)/\capa(K)$.

For each $w^*_i$ in \eqref{eq:omegaKu}, 
\begin{equation}\label{def:Xi}
\begin{array}{c}
\text{let $X_i$ be the unique trajectory from $(\pi^*)^{-1}(w^*_i)\subset W$ parametrized in such a way that}\\
\text{$X_i(0)\in K$ and $X_i(t)\notin K$ for all $t<0$.}
\end{array}
\end{equation} 
(Here we abuse notation and denote by $X_i$ (bi-infinite) trajectories rather than canonical 
coordinate maps in $W$ or $W_+$, see below \eqref{def_eq_W_plus}.)
By \eqref{def_Q_K} and \eqref{def_eq_nu}, 
given $N_{K,u}$ and $(u_i~:~1\leq i\leq N_{K,u})$, the random trajectories
 $(X_i~:~1\leq i\leq N_{K,u})$  are independent and for all $\mathcal A,\mathcal B\in\mathcal W_+$ (see below \eqref{def_eq_W_plus}), $x\in \Z^d$, 
\begin{equation}\label{eq:QK}
\mathbb P[(X_i(-t)~:~t\geq 0)\in \mathcal A,~ X_i(0)=x,~ (X_i(t)~:~t\geq 0)\in \mathcal B)]
=
P^K_x[\mathcal A]\cdot \widetilde e_K(x) \cdot P_x[\mathcal B] ,\
\end{equation}
where $P^K_x$ is the law of simple random walk started at $x$ and conditioned on $\widetilde H_K = \infty$, 
and $\widetilde e_K$ is defined in \eqref{def:normalizedequilibriummeasure}.

\subsection{Interlacement excursions}\label{sec:condindep:2}

\begin{definition}\label{def:excursions}
For $w\in W$, 
let $R_1(w) = \inf\{n\in\Z~:~w(n)\in K\}$ be the first entrance time of $w$ to $K$. 
If $R_1(w) < \infty$, let $D_1(w) =\inf\{n > R_1(w)~:~w(n)\notin K\}$ be the first exit time from $K$.
Similarly, for $k\geq 2$, if $R_{k-1}(w)<\infty$, let 
\[
D_{k-1}(w) = \inf\{n > R_{k-1}(w)~:~w(n)\notin K\} \quad\mbox{and}\quad
R_k(w) = \inf\{n > D_{k-1}(w)~:~w(n)\in K\} .\
\]
For $w$ with $R_1(w)<\infty$, let 
\[
M(w) = \max\{k\geq 1~:~ R_k(w) < \infty\} .\ 
\]
By \eqref{def:W}, $M(w)<\infty$ for any $w\in W$. 

Abusing notation, we extend the above definitions of $R_k$, $D_k$ and $M$ to trajectories $w_+\in W_+$ in a natural way, namely, 
defining $R_1(w_+) = H_K(w_+)$ (see \eqref{def:HU}), and all the other variables 
with the same formulas as above. 
\end{definition}

\bigskip

Given $(X_i~:~1\leq i\leq N_{K,u})$ as in \eqref{def:Xi}, 
for each $1\leq i\leq N_{K,u}$, let 
\[
M_i = M(X_i)
\]
be the number of times trajectory $X_i$ revisits $K$, 
and for each $1\leq j\leq M_i$, let 
\[
A_{i,j} = R_j(X_i) \quad \mbox{and}\quad
B_{i,j} = D_j(X_i) - 1 
\]
be the times when $j$th excursion of $X_i$ inside $K$ begins and ends. 
Note that $X_i(t)\in K$ if and only if $A_{i,j}\leq t\leq B_{i,j}$ for some $1\leq j\leq M_i$. 
For $1\leq i\leq N_{K,u}$ and $1\leq j\leq M_i$, let 
\[
T^\inn_{i,j} = B_{i,j} - A_{i,j}, \quad 
X^{\inn}_{i,j}(t) = X_i(A_{i,j} + t), \quad\text{for $0\leq t\leq T^\inn_{i,j}$} ,\
\]
and for $1\leq i\leq N_{K,u}$, $1\leq j\leq M_i-1$, let 
\[
T^\outt_{i,j} = A_{i,j+1} - B_{i,j}, \quad
X^\outt_{i,j}(t) = X_i(B_{i,j} + t), \quad\text{for $0\leq t\leq T^\outt_{i,j}$.}
\]
Note that $(X^\inn_{i,j}~:~1\leq j\leq M_i)$ correspond to the pieces of $X_i$ inside $K$, and 
$(X^\outt_{i,j}~:~1\leq j\leq M_i-1)$ correspond to the finite pieces of $X_i$ outside $K$ 
(except for their start and end points). 
Finally, let 
\[
X^-_i(t) = X_i(-t)\quad \mbox{and}\quad
X^+_i(t) = X_i(t+B_{i,M_i}),\quad\mbox{for }t\geq 0 ,\
\]
be the (infinite) pieces of trajectory $X_i$ up to the first enter in $K$ and from the last visit to $K$, respectively.

\subsection{Interior, exterior, and boundary $\sigma$-algebras}\label{sec:condindep:3}

Let $\mathcal F^\inn_{K,u}$ be the $\sigma$-algebra generated by the random variables 
\[
N_{K,u},\quad \left(u_i~:~1\leq i\leq N_{K,u}\right),\quad 
\left(M_i~:~1\leq i\leq N_{K,u}\right),\quad
\left(X^\inn_{i,j}~:~1\leq i\leq N_{K,u},~ 1\leq j\leq M_i\right) ,\
\]
i.e., $\mathcal F^\inn_{K,u}$ is generated by the excursions of the interlacement trajectories 
with labels at most $u$ inside $K$. 

Let $\mathcal F^\outt_{K,u}$ be the $\sigma$-algebra generated by 
\begin{multline*}
\omega - \omega_{K,u}, \quad N_{K,u},\quad \left(u_i~:~1\leq i\leq N_{K,u}\right),\quad 
\left(M_i~:~1\leq i\leq N_{K,u}\right),\\
\left(X^-_i~:~1\leq i\leq N_{K,u}\right),\quad
\left(X^\outt_{i,j}~:~1\leq i\leq N_{K,u},~ 1\leq j\leq M_i-1\right), \quad
\left(X^+_i~:~1\leq i\leq N_{K,u}\right)
\end{multline*}
i.e., $\mathcal F^\outt_{K,u}$ is generated by the excursions of the interlacement trajectories 
with labels at most $u$ outside $K$ and $\omega - \omega_{K,u}$ 
(see \eqref{def:omegaKu:omega-omegaKu}). 

Let $\mathcal F^{AB}_{K,u}$ be the $\sigma$-algebra generated by 
\begin{multline*}
N_{K,u},\quad \left(u_i~:~1\leq i\leq N_{K,u}\right),\quad 
\left(M_i~:~1\leq i\leq N_{K,u}\right),\\
\left((X_i(A_{i,j}), X_i(B_{i,j}))~:~1\leq i\leq N_{K,u},~ 1\leq j\leq M_i\right) ,\
\end{multline*}
i.e., $\mathcal F^{AB}_{K,u}$ is generated by the entrance and exit points 
of the interlacement trajectories with labels at most $u$ to $K$.

The following properties are immediate from the definitions. 
\begin{claim}\label{claim:sigmaF}
For any $K\subset\subset \Z^d$, 
\begin{itemize}
\item[(1)]
$\mathcal F^{AB}_{K,u}\subset\mathcal F^\inn_{K,u}$ and 
$\mathcal F^{AB}_{K,u}\subset\mathcal F^\outt_{K,u}$, 
\item[(2)]
$\sigma(\mathcal F^\inn_{K,u},\mathcal F^\outt_{K,u}) = \salgebra$ (see below \eqref{eq:omega}), 
\item[(3)]
$(\local^u_x~:~x\in K)$ is $\mathcal F^\inn_{K,u}$-measurable, and 
$(\local^u_x~:~x\in\Z^d\setminus K)$ is $\mathcal F^\outt_{K,u}$-measurable 
(recall the definition of $\local^u$ in \eqref{def_eq_local_time_at_level_u}). 
\end{itemize}
\end{claim}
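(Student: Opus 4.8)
The plan is to verify each of the three assertions in Claim~\ref{claim:sigmaF} directly from the definitions of the $\sigma$-algebras $\mathcal F^\inn_{K,u}$, $\mathcal F^\outt_{K,u}$, and $\mathcal F^{AB}_{K,u}$ in Section~\ref{sec:condindep:3}, together with the excursion decomposition set up in Section~\ref{sec:condindep:2}. Since these are ``immediate from the definitions'', the proof is essentially a careful bookkeeping of which random variables generate which $\sigma$-algebra, so I will keep it short.

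For (1): the generators $N_{K,u}$, $(u_i)$, $(M_i)$ of $\mathcal F^{AB}_{K,u}$ are also among the generators of both $\mathcal F^\inn_{K,u}$ and $\mathcal F^\outt_{K,u}$, so it suffices to see that the entrance/exit points $(X_i(A_{i,j}), X_i(B_{i,j}))$ are measurable with respect to each. For $\mathcal F^\inn_{K,u}$, note that $X_i(A_{i,j}) = X^\inn_{i,j}(0)$ and $X_i(B_{i,j}) = X^\inn_{i,j}(T^\inn_{i,j})$ with $T^\inn_{i,j}$ the length of the excursion $X^\inn_{i,j}$, hence a function of $X^\inn_{i,j}$; so these points are $\mathcal F^\inn_{K,u}$-measurable. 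For $\mathcal F^\outt_{K,u}$, observe that the start and end points of the outside pieces recover the boundary points: $X_i(B_{i,j}) = X^\outt_{i,j}(0)$ for $1\le j\le M_i-1$ and $X_i(A_{i,j+1}) = X^\outt_{i,j}(T^\outt_{i,j})$, while $X_i(A_{i,1}) = X^-_i(0)$... wait, more precisely $X_i(A_{i,1})$ is the endpoint $X^-_i(\,\cdot\,)$ reaches, i.e.\ the last point of $X^-_i$ before entering $K$, which is $X^-_i(A_{i,1})$ read off as the hitting point of $K$ by the reversed trajectory, and $X_i(B_{i,M_i}) = X^+_i(0)$. All of these are measurable functions of the generators of $\mathcal F^\outt_{K,u}$, giving $\mathcal F^{AB}_{K,u}\subset\mathcal F^\outt_{K,u}$.

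For (2): every trajectory $X_i$ (for $1\le i\le N_{K,u}$) is completely reconstructed by concatenating $X^-_i$, the excursions $X^\inn_{i,j}$ and the outside pieces $X^\outt_{i,j}$ in order, and $X^+_i$; the gluing is determined because consecutive pieces share their endpoints and the lengths $T^\inn_{i,j}, T^\outt_{i,j}$ and the count $M_i$ are available. Hence each labeled trajectory $(w^*_i,u_i)$ with $w^*_i\in W^*_K$, $u_i\le u$ is $\sigma(\mathcal F^\inn_{K,u},\mathcal F^\outt_{K,u})$-measurable, so $\omega_{K,u}$ is; combined with $\omega-\omega_{K,u}$, which is a generator of $\mathcal F^\outt_{K,u}$, we recover all of $\omega$, i.e.\ $\sigma(\mathcal F^\inn_{K,u},\mathcal F^\outt_{K,u})\supseteq\salgebra$. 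The reverse inclusion is clear since all the generating random variables are $\salgebra$-measurable functions of $\omega$.

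For (3): by \eqref{def_eq_local_time_at_level_u}, $\local^u_x = \sum_{i\ge 1,\,u_i<u}\sum_{n\in\Z}\mathds 1_{\{w_i(n)=x\}}$, and for $x\in K$ only trajectories with $w^*_i\in W^*_K$ contribute, and among those only the times $n$ spent inside $K$, i.e.\ $\local^u_x = \sum_{i=1}^{N_{K,u}}\sum_{j=1}^{M_i}\#\{t: X^\inn_{i,j}(t)=x\}$ for $x\in K$ — a function of the generators of $\mathcal F^\inn_{K,u}$ (one should be slightly careful about whether it is $u_i<u$ or $u_i\le u$, but since the $u_i$ are a.s.\ distinct and $\{u_i=u\}$ has probability zero this is immaterial). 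Similarly, for $x\in\Z^d\setminus K$ the visits to $x$ come from the pieces $X^-_i$, $X^\outt_{i,j}$, $X^+_i$ of the trajectories counted in $\omega_{K,u}$ plus all trajectories in $\omega-\omega_{K,u}$ with label $<u$, all of which are generators of $\mathcal F^\outt_{K,u}$. The only mildly delicate point in the whole argument is the reconstruction in (2) — making sure that the ordered concatenation of the pieces genuinely recovers $X_i$ as an element of $W$, which uses the definition of $M_i$ as the (finite, by \eqref{def:W}) number of excursions and the fact that the pieces are indexed consistently — but this is bookkeeping rather than a real obstacle.
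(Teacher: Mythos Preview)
Your proposal is correct and is exactly the kind of direct verification the paper has in mind when it says the properties are ``immediate from the definitions''; the paper gives no further proof. One small clean-up: your hesitation about $X_i(A_{i,1})$ is unnecessary, since by the parametrization in \eqref{def:Xi} one has $A_{i,1}=0$, hence $X_i(A_{i,1})=X_i(0)=X^-_i(0)$ on the nose.
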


\subsection{Conditional independence}\label{sec:condindep:4}

In this section we prove the main result of Section~\ref{sec:condindep}, which states that the
$\sigma$-algebras $\mathcal F^\inn_{K,u}$ 
(generated by the excursions of the interlacement trajectories inside $K$)
 and $\mathcal F^\outt_{K,u}$ (generated by the excursions outside $K$ and $\omega - \omega_{K,u}$ 
(see \eqref{def:omegaKu:omega-omegaKu}))
 are conditionally independent, 
given $\mathcal F^{AB}_{K,u}$ (generated by the entrance and exit points 
of the interlacement trajectories to $K$).
 In the proof of \eqref{eq:sts:inerlacement:2}, we will only use 
Lemma~\ref{l:condindep}(a) and \eqref{eq:EinnK:proba} 
(see the proofs of Lemmas~\ref{l:condindep:good} and \ref{l:Pk}, respectively).
We begin with a definition. 

\begin{definition}\label{def:eventsE}
For integers $n\geq 1$, $1\leq i\leq n$, $\mathcal U_i\in\mathcal B([0,u])$, 
$\mathcal A_i,\mathcal B_i\in\mathcal W_+$, integers $m_i\geq 1$, 
$1\leq j\leq m_i$, $x_{i,j},y_{i,j}\in \intb K$, finite nearest-neighbor trajectories 
$\tau^\inn_{i,j}$ from $x_{i,j}$ to $y_{i,j}$ in $K$, and 
for $1\leq j'\leq m_i-1$, finite nearest-neighbor trajectories 
$\tau^\outt_{i,j'}$ from $y_{i,j'}$ to $x_{i,j'+1}$ outside $K$ except for the start and end points, 
consider the events 
\begin{equation}\label{def:EABK}
\mathcal E^{AB}_{K,u} 
= 
\left\{
\begin{array}{c}
N_{K,u}=n,~ u_i\in\mathcal U_i,~ M_i= m_i,~ X_i(A_{i,j}) = x_{i,j},~ X_i(B_{i,j}) = y_{i,j},\\
\text{for all $1\leq i\leq n$, $1\leq j\leq m_i$}
\end{array}
\right\}
\in \mathcal F^{AB}_{K,u} ,\
\end{equation}
\begin{equation}\label{def:EinK}
\mathcal E^\inn_{K,u}
=
\left\{
\begin{array}{c}
N_{K,u}=n,~ u_i\in\mathcal U_i,~ M_i= m_i,~ X_i(A_{i,j}) = x_{i,j},~ X_i(B_{i,j}) = y_{i,j},\\
X^\inn_{i,j} = \tau^\inn_{i,j},\quad
\text{for all $1\leq i\leq n$, $1\leq j\leq m_i$}
\end{array}
\right\}
\in \mathcal F^\inn_{K,u} ,\
\end{equation}
\begin{equation}\label{def:EoutK}
\mathcal E^\outt_{K,u}
=
\left\{
\begin{array}{c}
N_{K,u}=n,~ u_i\in\mathcal U_i,~ M_i= m_i,~ X_i(A_{i,j}) = x_{i,j},~ X_i(B_{i,j}) = y_{i,j},\\
X^\outt_{i,j'}= \tau^\outt_{i,j'},~ X^-_i\in\mathcal A_i,~ X^+_i \in \mathcal B_i,\\ 
\text{for all $1\leq i\leq n$, $1\leq j\leq m_i$, $1\leq j'\leq m_i-1$}
\end{array}
\right\}
\in \mathcal F^\outt_{K,u} .\
\end{equation}
\end{definition}
Note that 
\begin{equation}\label{rem:sigmagen}
\begin{array}{c}
\text{the $\sigma$-algebras $\mathcal F^\inn_{K,u}$ and $\mathcal F^{AB}_{K,u}$ are respectively generated 
by events of form \eqref{def:EinK} and \eqref{def:EABK},}\\
\text{and $\mathcal F^\outt_{K,u}$ is generated by the events $\mathcal E^\outt_{K,u}\cap \{\omega - \omega_{K,u}\in\mathcal E\}$,
with $\mathcal E\in \salgebra$ (see below \eqref{eq:omega}).}
\end{array}
\end{equation}

\begin{lemma}\label{l:condindep}
For any $K\subset\subset \Z^d$, $u>0$, \\
(a) $\mathcal F^\inn_{K,u}$ and $\mathcal F^\outt_{K,u}$ are conditionally independent, given $\mathcal F^{AB}_{K,u}$, and\\ 
(b) For any choice of the parameters in Definition~\ref{def:eventsE}, 
we have 
\begin{equation}\label{eq:condindep}
\mathbb P\left[\mathcal E^\inn_{K,u}\cap\mathcal E^\outt_{K,u}\right]
=
\mathbb P\left[\mathcal E^{AB}_{K,u}\right]\cdot
\mathbb P\left[\mathcal E^\inn_{K,u}~|~\mathcal E^{AB}_{K,u}\right]\cdot
\mathbb P\left[\mathcal E^\outt_{K,u}~|~\mathcal E^{AB}_{K,u}\right] ,\
\end{equation}
and
\begin{align}
\mathbb P[\mathcal E^{AB}_{K,u}]
&= \mathbb P[N_{K,u} = n]\cdot
\mathbb P[u_i\in\mathcal U_i~:~1\leq i\leq n]\label{eq:EABK:proba} \\ 
&\quad\quad\prod_{i=1}^n\widetilde e_K(x_{i,1})\cdot 
P_{x_{i,1}}\left[
\begin{array}{c}
M(X) = m_i,~X(R_j) = x_{i,j},~X(D_j-1) = y_{i,j}\\
\text{for all $1\leq j\leq m_i$} 
\end{array}
\right] ,\nonumber \\
\mathbb P[\mathcal E^\inn_{K,u}~|~\mathcal E^{AB}_{K,u}]
&=
\prod_{i=1}^n\prod_{j=1}^{m_i}
\frac{P_{x_{i,j}}[(X(t)~:~0\leq t\leq T_K-1)= \tau^\inn_{i,j}]}
{P_{x_{i,j}}[X(T_K-1)= y_{i,j}]},\label{eq:EinnK:proba} \\
\mathbb P[\mathcal E^\outt_{K,u}~|~\mathcal E^{AB}_{K,u}]
&=
\prod_{i=1}^n P^K_{x_{i,1}}[\mathcal A_i]\cdot P^K_{y_{i,m_i}}[\mathcal B_i]\cdot
\prod_{j'=1}^{m_i-1}
\frac{P_{y_{i,j'}}[(X(t)~:~0\leq t\leq \widetilde H_K)= \tau^\outt_{i,j'},~\widetilde H_K<\infty]}
{P_{y_{i,j'}}[X(\widetilde H_K)= x_{i,j'+1},~\widetilde H_K<\infty]} ,\label{eq:EoutK:proba} 
\end{align}
where $T_K$ and $\widetilde H_K$ are defined in \eqref{def:TU} and \eqref{def:tildeHU}, respectively. 
\end{lemma}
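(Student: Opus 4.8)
The idea is to reduce everything to the structure of $\omega_{K,u}$ recalled in \eqref{eq:omegaKu}, namely that conditionally on $N_{K,u}=n$ and $(u_i)$, the trajectories $(X_i)_{1\le i\le n}$ of Definition~\ref{def:Xi} are independent with law given by \eqref{eq:QK}; moreover $\omega-\omega_{K,u}$ is an independent Poisson point process by \eqref{def:omegaKu:omega-omegaKu}. By \eqref{rem:sigmagen}, it suffices to verify the factorization identity for events of the special product form of Definition~\ref{def:eventsE}; since $\omega-\omega_{K,u}$ is independent of everything built from $\omega_{K,u}$ and is $\mathcal F^\outt_{K,u}$-measurable while contributing nothing to $\mathcal F^\inn_{K,u}$ or $\mathcal F^{AB}_{K,u}$, the extra factor $\{\omega-\omega_{K,u}\in\mathcal E\}$ just rides along and I will suppress it for the main computation. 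So the core is: compute $\mathbb P[\mathcal E^\inn_{K,u}\cap\mathcal E^\outt_{K,u}]$, $\mathbb P[\mathcal E^{AB}_{K,u}]$, $\mathbb P[\mathcal E^\inn_{K,u}\mid\mathcal E^{AB}_{K,u}]$ and $\mathbb P[\mathcal E^\outt_{K,u}\mid\mathcal E^{AB}_{K,u}]$ explicitly, and check that \eqref{eq:condindep} holds on the nose; parts (a) and (b) both follow, (a) by a standard generating-$\pi$-system argument using \eqref{rem:sigmagen}.

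\textbf{Key steps.}
First, condition on $\{N_{K,u}=n\}$ and on $(u_i\in\mathcal U_i)$; this contributes the prefactor $\mathbb P[N_{K,u}=n]\cdot\mathbb P[u_i\in\mathcal U_i:1\le i\le n]$ to every probability in sight and then drops out of the ratios, so I may work conditionally on these. Second, by the conditional independence of the $X_i$ and \eqref{eq:QK}, each $X_i$ decomposes as: an entrance point $X_i(0)=x_{i,1}$ chosen with weight $\widetilde e_K(x_{i,1})$, a backward part $X^-_i$ with law $P^K_{x_{i,1}}$, and a forward part $(X_i(t):t\ge 0)$ with law $P_{x_{i,1}}$. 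Apply the strong Markov property to the forward walk successively at the times $R_2,D_1,R_3,\dots$: this splits the forward trajectory into the interior excursions $X^\inn_{i,j}$ (each a walk run until exiting $K$), the finite exterior excursions $X^\outt_{i,j'}$ (each a walk run until $\widetilde H_K$), and the terminal infinite piece $X^+_i$ (a walk started at $y_{i,m_i}$ conditioned never to return to $K$, i.e.\ law $P^K_{y_{i,m_i}}$). Third, write out the resulting product of transition probabilities; it factors cleanly into an "interior" product $\prod_{i,j}P_{x_{i,j}}[(X(t):0\le t\le T_K-1)=\tau^\inn_{i,j}]$, an "exterior" product $\prod_i P^K_{x_{i,1}}[\mathcal A_i]P^K_{y_{i,m_i}}[\mathcal B_i]\prod_{j'}P_{y_{i,j'}}[(X(t):0\le t\le\widetilde H_K)=\tau^\outt_{i,j'},\widetilde H_K<\infty]$, and a "skeleton" product $\prod_i\widetilde e_K(x_{i,1})\cdot P_{x_{i,1}}[M(X)=m_i,\,X(R_j)=x_{i,j},\,X(D_j-1)=y_{i,j}\ \forall j]$, which is exactly $\mathbb P[\mathcal E^{AB}_{K,u}]$ up to the already-extracted prefactor; this proves \eqref{eq:EABK:proba}. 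Fourth, dividing the interior product by the skeleton factor and using the Markov property once more to recognize $P_{x_{i,j}}[M(X)=m_i,\dots]$ as a product over $j$ of terms $P_{x_{i,j}}[X(T_K-1)=y_{i,j}]$ times the exterior-skeleton pieces, one gets the ratio form \eqref{eq:EinnK:proba}; symmetrically, dividing the exterior product by the complementary skeleton pieces gives \eqref{eq:EoutK:proba}. Putting the three pieces back together yields $\mathbb P[\mathcal E^\inn_{K,u}\cap\mathcal E^\outt_{K,u}]=\mathbb P[\mathcal E^{AB}_{K,u}]\cdot\mathbb P[\mathcal E^\inn_{K,u}\mid\mathcal E^{AB}_{K,u}]\cdot\mathbb P[\mathcal E^\outt_{K,u}\mid\mathcal E^{AB}_{K,u}]$, i.e.\ \eqref{eq:condindep}; finally, a monotone-class / $\pi$-$\lambda$ argument extends the factorization from the generating events of \eqref{rem:sigmagen} to all of $\mathcal F^\inn_{K,u}$ and $\mathcal F^\outt_{K,u}$, establishing (a).

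\textbf{Main obstacle.}
The bookkeeping is the real difficulty: one must apply the strong Markov property at the random times $R_k,D_k$ in the right order and keep track of which transition-probability factors land in the "interior," "exterior," and "skeleton" groups, being careful that the endpoints $X_i(A_{i,j})=x_{i,j}$ and $X_i(B_{i,j})=y_{i,j}$ are shared between neighbouring excursions and so appear in the conditioning on both sides but are counted once in the skeleton. A secondary subtlety is the degenerate geometry: for the identities to make sense one needs the denominators $P_{x_{i,j}}[X(T_K-1)=y_{i,j}]$ and $P_{y_{i,j'}}[X(\widetilde H_K)=x_{i,j'+1},\widetilde H_K<\infty]$ to be positive, which holds automatically on the event $\mathcal E^{AB}_{K,u}$ whenever that event has positive probability (otherwise both sides of \eqref{eq:condindep} vanish and there is nothing to prove), and the case $m_i=1$, where there are no exterior excursions and the middle products are empty, must be handled without special treatment since empty products equal $1$. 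Once the decomposition is set up carefully, each individual factorization identity \eqref{eq:EABK:proba}--\eqref{eq:EoutK:proba} is just reading off the corresponding group of factors.
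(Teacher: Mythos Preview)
Your proposal is correct and follows essentially the same route as the paper: decompose $\mathbb P[\mathcal E^\inn_{K,u}\cap\mathcal E^\outt_{K,u}]$ using \eqref{eq:omegaKu}--\eqref{eq:QK}, apply the Markov property at the successive times $R_j,D_j$ to factor the forward walk into interior excursions, exterior excursions, and the terminal piece, marginalize to obtain \eqref{eq:EABK:proba}--\eqref{eq:EoutK:proba}, and deduce (a) from \eqref{eq:condindep} via \eqref{rem:sigmagen} and the independence of $\omega-\omega_{K,u}$. The only place where the paper is more explicit than your sketch is the small rewriting trick (multiplying and dividing by $P_{y_{i,j}}[X(1)\notin K]$) that converts $P_{x_{i,j}}[X(t)=\tau^\inn_{i,j}(t),\,0\le t\le |\tau^\inn_{i,j}|-1]$ into $P_{x_{i,j}}[(X(t):0\le t\le T_K-1)=\tau^\inn_{i,j}]$, which is exactly the bookkeeping you flag as the main obstacle.
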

\begin{proof}[Proof of Lemma~\ref{l:condindep}]
Statement (a) immediately follows from \eqref{eq:condindep}, 
the fact that point processes $\omega_{K,u}$ and $\omega-\omega_{K,u}$ are independent, 
the inclusion $\mathcal E^\inn_{K,u},~\mathcal E^\outt_{K,u}\subseteq\mathcal E^{AB}_{K,u}$, 
and \eqref{rem:sigmagen}.

\medskip

To prove (b), we first observe that the expressions in \eqref{eq:EABK:proba}, \eqref{eq:EinnK:proba}, and \eqref{eq:EoutK:proba} 
indeed give rise to probability distributions. 

We rewrite the left-hand side of \eqref{eq:condindep} using the definition \eqref{eq:omegaKu} of $\omega_{K,u}$ and \eqref{eq:QK} as 
\begin{multline*}
\mathbb P[\mathcal E^\inn_{K,u}\cap\mathcal E^\outt_{K,u}] ~=~
\mathbb P[N_{K,u}=n]\cdot
\mathbb P[u_i\in\mathcal U_i~:~1\leq i\leq n]\cdot
\prod_{i=1}^nP^K_{x_{i,1}}[\mathcal A_i]\cdot\widetilde e_K(x_{i,1})\\
\cdot
\prod_{i=1}^n
P_{x_{i,1}}\left[
\begin{array}{c}
M(X)=m_i, \quad X(R_j) = x_{i,j},\quad X(D_j-1) = y_{i,j},\\
(X(t)~:~R_j\leq t\leq D_j-1) = \tau^\inn_{i,j},\quad 
(X(t)~:~D_{j'}-1\leq t\leq R_{j'+1})= \tau^\outt_{i,j'},\\
(X(t+D_{m_i}-1)~:~t\geq 0) \in \mathcal B_i,~~~
\text{for all $1\leq j\leq m_i$, $1\leq j'\leq m_i-1$}
\end{array}
\right] .\
\end{multline*}
Note that this equality immediately implies \eqref{eq:EABK:proba} by taking all $\mathcal A_i$ and $\mathcal B_i$ equal to $W_+$ 
and summing over all possible paths $\tau^\inn_{i,j}$ and $\tau^\outt_{i,j'}$. 
 
Consecutive applications of the Markov property for simple random walk 
imply that 
the above expression equals
\begin{multline}\label{eq:condindep:rw}
\mathbb P[N_{K,u}=n]\cdot
\mathbb P[u_i\in\mathcal U_i~:~1\leq i\leq n]\cdot
\prod_{i=1}^nP^K_{x_{i,1}}[\mathcal A_i]\cdot\widetilde e_K(x_{i,1})\\
\cdot\prod_{i=1}^n
\prod_{j=1}^{m_i}
P_{x_{i,j}}\left[X(t) = \tau^\inn_{i,j}(t)~:~ 0\leq t\leq |\tau^\inn_{i,j}|-1\right]\\
\cdot
\prod_{i=1}^n
\prod_{j'=1}^{m_i-1}
P_{y_{i,j'}}\left[X(t) = \tau^\outt_{i,j'}(t)~:~ 0\leq t\leq |\tau^\outt_{i,j'}|-1\right]\\
\cdot\prod_{i=1}^n P_{y_{i,m_i}}\left[\mathcal B_i, \widetilde H_K = \infty\right] .\
\end{multline}
We will now rearrange the terms in \eqref{eq:condindep:rw} to obtain \eqref{eq:condindep}, \eqref{eq:EinnK:proba}, 
and \eqref{eq:EoutK:proba}. 
We begin with a few observations. 
Note that 
\begin{equation}\label{eq:condindep:rw:0}
P_{y_{i,j'}}\left[X(t) = \tau^\outt_{i,j'}(t),\quad 0\leq t\leq |\tau^\outt_{i,j'}|-1\right]
=
P_{y_{i,j'}}\left[(X(t)~:~0\leq t\leq \widetilde H_K) = \tau^\outt_{i,j'}\right] ,\ 
\end{equation}
and
\begin{equation}\label{eq:condindep:rw:1}
P_{y_{i,m_i}}\left[\mathcal B_i, \widetilde H_K = \infty\right]
=
e_K(y_{i,m_i})\cdot P^K_{y_{i,m_i}}[\mathcal B_i] .\
\end{equation}
Also note that by the Markov property at time $|\tau^\inn_{i,j}|-1$, we have 
\begin{multline}\label{eq:condindep:rw:2}
P_{x_{i,j}}\left[X(t) = \tau^\inn_{i,j}(t),\quad 0\leq t\leq |\tau^\inn_{i,j}|-1\right]\\
=
\frac{P_{x_{i,j}}\left[X(t) = \tau^\inn_{i,j}(t),\quad 0\leq t\leq |\tau^\inn_{i,j}|-1,\quad X(|\tau^\inn_{i,j}|)\notin K\right]}
{P_{y_{i,j}}\left[X(1)\notin K\right]} \\
=
\frac{P_{x_{i,j}}\left[(X(t)~:~0\leq t\leq T_K-1) = \tau^\inn_{i,j}\right]}
{P_{y_{i,j}}\left[X(1)\notin K\right]}. 
\end{multline}
We now plug in the expressions \eqref{eq:condindep:rw:0}, \eqref{eq:condindep:rw:1}, and \eqref{eq:condindep:rw:2} 
into \eqref{eq:condindep:rw} to get that 
$\mathbb P[\mathcal E^\inn_{K,u}\cap\mathcal E^\outt_{K,u}]$ equals
\begin{equation}\label{eq:condindep:rw:detailed}
\begin{split}
&\mathbb P[N_{K,u}=n]\cdot
\mathbb P[u_i\in\mathcal U_i~:~1\leq i\leq n]\cdot
\prod_{i=1}^nP^K_{x_{i,1}}[\mathcal A_i]\cdot\widetilde e_K(x_{i,1})\cdot 
e_K(y_{i,m_i})\cdot P^K_{y_{i,m_i}}[\mathcal B_i]\\
&\prod_{i=1}^n
\prod_{j=1}^{m_i}
\frac{P_{x_{i,j}}\left[(X(t)~:~0\leq t\leq T_K-1) = \tau^\inn_{i,j}\right]}
{P_{y_{i,j}}\left[X(1)\notin K\right]}\cdot
\prod_{j'=1}^{m_i-1}
P_{y_{i,j'}}\left[
(X(t)~:~0\leq t\leq \widetilde H_K) = \tau^\outt_{i,j'}
\right] .\
\end{split}
\end{equation}
By taking $\mathcal A_i = \mathcal B_i = W_+$ in \eqref{eq:condindep:rw:detailed} and summing over all $\tau^\inn_{i,j}$ 
and $\tau^\outt_{i,j'}$, we obtain that 
\begin{multline}\label{eq:EABK:proba:2}
\mathbb P[\mathcal E^{AB}_{K,u}]
= \mathbb P[N_{K,u} = n]\cdot
\mathbb P[u_i\in\mathcal U_i~:~1\leq i\leq n] \\ 
\cdot\prod_{i=1}^n\widetilde e_K(x_{i,1})\cdot 
e_K(y_{i,m_i})\cdot
\prod_{j=1}^{m_i}\frac{P_{x_{i,j}}[X(T_K-1) = y_{i,j}]}{P_{y_{i,j}}[X(1)\notin K]}\cdot
\prod_{j'=1}^{m_i-1}P_{y_{i,j'}}[X(\widetilde H_K) = x_{i,j'+1}] .
\end{multline}
The expression \eqref{eq:EinnK:proba} follows from \eqref{eq:condindep:rw:detailed} by 
taking all $\mathcal A_i=\mathcal B_i=W_+$ in \eqref{eq:condindep:rw:detailed}, summing over all $\tau^\outt_{i,j'}$, and 
dividing by \eqref{eq:EABK:proba:2}. 
Similarly, the expression \eqref{eq:EoutK:proba} follows from \eqref{eq:condindep:rw:detailed} by 
summing \eqref{eq:condindep:rw:detailed} over all $\tau^\inn_{i,j}$ and dividing by \eqref{eq:EABK:proba:2}. 

Finally, to obtain \eqref{eq:condindep}, we observe that 
the product of the right-hand sides of \eqref{eq:EinnK:proba}, \eqref{eq:EoutK:proba}, and \eqref{eq:EABK:proba:2} 
equals \eqref{eq:condindep:rw:detailed}. 
The proof of Lemma~\ref{l:condindep} is complete. 
\end{proof}

\bigskip

\section{Proof of Theorem~\ref{thm:sts:interlacement}}\label{sec:proofthm}

Statement \eqref{eq:sts:interlacement:1} of Theorem~\ref{thm:sts:interlacement} follows from Corollary~\ref{cor:bad:*paths} (2) and (3). 
Statement \eqref{eq:sts:inerlacement:2} is proved in Section~\ref{sec:proof:sts:int:2}. 
We will deduce it there from Claim~\ref{claim:HN:proba} and Lemma~\ref{l:eventA:estimate}, which we state in Section~\ref{sec:ubiq}. 

We begin with a general overview of the proof of \eqref{eq:sts:inerlacement:2}. 
As we already know from Corollary~\ref{cor:bad:*paths}, 
we can choose $R$ and $u$ such that $\mathcal V^u$ contains 
an infinite connected subset $\cup_{x'\in \mathcal G^\infty}\edges(x')$, 
where $\mathcal G^\infty$ is the unique infinite connected component of $R$-good vertices in $\ZZ$ for $\mathcal L^u$. 
The goal is to show that if a vertex of $\Z^d$ is in a large connected component of $\mathcal V^u$, then, with high probability, 
it must be (locally) connected to $\cup_{x'\in \mathcal G^\infty}\edges(x')$. 
This is realized in Lemma~\ref{l:eventA:estimate}. 
The crucial observation is that by Corollary~\ref{cor:bad:*paths}, 
with high probability, any long nearest-neighbor path in $\Z^d$ will 
often intersect $\cup_{x'\in \mathcal G^\infty}\ballZ(x',R)$ (see \eqref{eq:HN:proba} and \eqref{eq:Skproperty}). 

The proof of Lemma~\ref{l:eventA:estimate} proceeds by exploring the connected component of a vertex in $\mathcal V^u$,
and showing that every visit to a new box of $\cup_{x'\in \mathcal G^\infty}\ballZ(x',R)$
gives a fresh, uniformly positive chance for the (already explored) vacant set to merge with 
$\cup_{x'\in \mathcal G^\infty}\edges(x')$ (see Lemmas \ref{l:eventA:estimate:k} and \ref{l:Pkbad}). 
The key observation in 
proving that the history of this exploration does not have a negative effect
on the success probability of the next merger 
comes from Lemma~\ref{l:condindep}:  
if we consider a box of radius $R$, the events
which depend on the behavior of the interlacement trajectories outside this box are conditionally
independent of what they do inside the box, given the collection of entrance and exit points
of the excursions inside the box. 
As we already pointed out earlier, some care is still needed, since 
random interlacements do not posess the finite energy property. 
Our definition of good vertices (more precisely, property (1) of Definition~\ref{def:good}) 
allows to overcome this difficulty (see the proof of Lemma~\ref{l:Pkbad}). 
In order to get a uniform lower bound in \eqref{eq:Pkbad:1} of Lemma~\ref{l:Pkbad}, 
we use the fact that the number of excursions of the interlacement trajectories inside
good boxes (corresponding to good vertices) is bounded (see property (2) of Definition~\ref{def:good}). 

\medskip

We now proceed with the proof of \eqref{eq:sts:inerlacement:2}. 
\begin{equation}\label{eq:Ru_1}
\text{From now on we fix $R$ and $u_1$ that satisfy \eqref{eq:connectivity:badvertices:z}, and consider $u\leq u_1$.}
\end{equation}
Since $R$ is now fixed, we will call $R$-good/$R$-bad vertices (see Definition~\ref{def:good}) simply good/bad.

\subsection{Large cluster in $\mathcal V^u$ is likely to be ubiquitous}\label{sec:ubiq}

\noindent
The main result of this section is Lemma~\ref{l:eventA:estimate}. 
We begin with definitions and preliminary observations. 
Recall the definitions of the coarse grained lattice $\ZZ$ from \eqref{eq:ZZ} and 
the ball $\ballZZ(x',N)$ in $\ZZ$ from below \eqref{eq:ZZ}. 

For $N\geq 1$, let 
\begin{equation*}
k_N = \lfloor \sqrt N \rfloor \;\; \text{ and } \;\;
 K_{N,k} = N + k_N\cdot k, \;\; \text{ for } \;\; 0\leq k\leq k_N. 
\end{equation*}

Now we define an event that a large hypercube $\ballZZ(0,2N)$ in $\ZZ$ contains 
a (large) connected component of good vertices in $\ZZ$ which contains separating shells in 
each of $k_N$ concentric annuli $\ballZZ(0,K_{N,k})\setminus\ballZZ(0,K_{N,k-1})$, $1\leq k\leq k_N$.

\begin{definition}\label{def:HN}
For $N\geq 1$, let $\mathcal H_N$ be the event that 
\begin{enumerate}\itemsep1pt
\item
$\ballZZ(0,N)$ is connected to $\intbb\ballZZ(0,2N)$ by a nearest-neighbor path of good vertices for $\local^u$ in $\ZZ$, 
\item
for all $1\leq k\leq k_N$, 
$\ballZZ(0,K_{N,k})\setminus\ballZZ(0,K_{N,k-1})$ contains a set $\mathcal S_k\subset \ZZ$ 
(which we call a shell in $\ballZZ(0,K_{N,k})$ around $\ballZZ(0,K_{N,k-1})$) such that
\begin{itemize}
\item[(a)]
$\mathcal S_k$ is connected in $\ZZ$, 
\item[(b)]
each $x\in\mathcal S_k$ is good for $\local^u$, and
\item[(c)]
every $*$-path in $\ZZ$ from $\ballZZ(0,K_{N,k-1})$ to $\intbb\ballZZ(0,K_{N,k})$ intersects $\mathcal S_k$.
\end{itemize}
\end{enumerate}
\end{definition}
\begin{figure}
\begin{center}
\psfragscanon
\includegraphics[height=8cm]{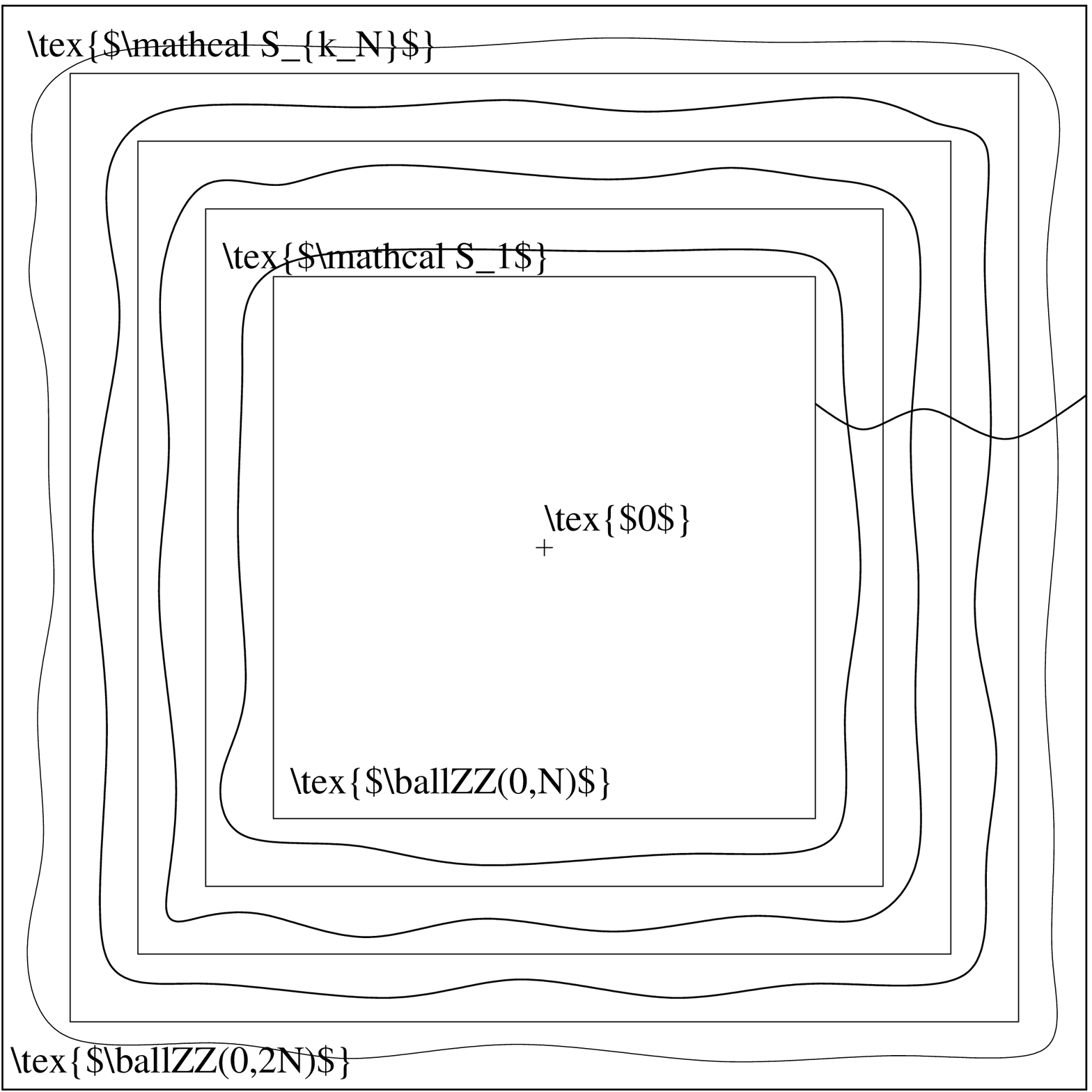}
\caption{The event $\mathcal H_N$. In each of the $k_N$ concentric annuli 
$\ballZZ(0,K_{N,k})\setminus\ballZZ(0,K_{N,k-1})$, $1\leq k\leq k_N$, there exists a connected component 
$\mathcal S_k$ in $\ZZ$ of good vertices (which we call a shell) 
separating $\ballZZ(0,K_{N,k-1})$ from $\intbb\ballZZ(0,K_{N,k})$, and 
all the $\mathcal S_k$ are (disjoint) parts of the same connected component of good vertices in $\ballZZ(0,2N)$.}
\end{center}
\end{figure}
\begin{claim}\label{claim:HN:proba}
It follows from Corollary~\ref{cor:bad:*paths} that for $R$ and $u \leq u_1$ as in \eqref{eq:Ru_1}, 
there exist constants $c=c(d)>0$ and $C=C(d)<\infty$ (possibly different from the ones in Lemma~\ref{l:bad:*paths}) such that 
\begin{equation}\label{eq:HN:proba}
\mathbb P[\mathcal H_N] \geq 1 - Ce^{-N^c} .\
\end{equation}
\end{claim}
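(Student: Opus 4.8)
The plan is to deduce Claim~\ref{claim:HN:proba} directly from Corollary~\ref{cor:bad:*paths} by a union bound over the $k_N$ concentric annuli plus a connectedness argument to glue the shells together. First I would apply Corollary~\ref{cor:bad:*paths}(1) in each annulus $\ballZZ(0,K_{N,k})\setminus\ballZZ(0,K_{N,k-1})$: taking $N\mapsto K_{N,k-1}$ and $n\mapsto k_N$ (so that $K_{N,k-1}+k_N = K_{N,k}$), \eqref{eq:good:shell} gives a connected-in-$\ZZ$ set $\mathcal S_k$ of good vertices satisfying (a),(b),(c) of Definition~\ref{def:HN}, with failure probability at most $C\cdot|\ballZZ(0,K_{N,k-1}+1)|\cdot e^{-k_N^c}$. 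Since $K_{N,k}\leq 2N$ for all $k\leq k_N$, each such factor is bounded by $C N^d e^{-k_N^c}$, and summing over $1\leq k\leq k_N \leq \sqrt N$ the total failure probability of item~2 is at most $C N^{d+1} e^{-(\sqrt N)^c} = C N^{d+1} e^{-N^{c/2}}$, which is $\leq C' e^{-N^{c'}}$ after adjusting constants.

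Next I would argue that, on the event that all $k_N$ shells $\mathcal S_k$ exist, they automatically all lie in a single connected component of good vertices in $\ballZZ(0,2N)$, which also yields item~1 of Definition~\ref{def:HN}. The point is that $\mathcal S_k$ is a connected set of good vertices separating $\ballZZ(0,K_{N,k-1})$ from $\intbb\ballZZ(0,K_{N,k})$ inside the annulus, so any nearest-neighbor path crossing from the inner ball to the outer boundary of the annulus must hit $\mathcal S_k$ (this is the standard duality between $*$-separating sets and nearest-neighbor crossings; here (c) is stated for $*$-paths, which a fortiori covers nearest-neighbor paths). In particular, since $\mathcal S_{k+1}$ is connected and sits in the next annulus out, a nearest-neighbor path within $\mathcal S_{k+1}$ from one side of that annulus to the other would have to pass through $\mathcal S_k$ — more carefully, one takes a nearest-neighbor path of good vertices realizing the connectedness of $\mathcal S_{k+1}$, extends it inward by a straight segment toward the origin, and observes it must meet $\mathcal S_k$; this shows consecutive shells are connected to each other through good vertices, hence by induction all shells, together with a connecting path reaching from $\ballZZ(0,N)$ out to $\intbb\ballZZ(0,2N)$, form one connected good component, giving item~1. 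I would present this gluing step carefully since it is the only part not handed to us verbatim by Corollary~\ref{cor:bad:*paths}.

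The main obstacle I anticipate is precisely this gluing argument: making rigorous the claim that a connected separating shell in one annulus necessarily touches the connected separating shell in the adjacent annulus, and that together they connect the inner ball to the outer boundary by a nearest-neighbor (not merely $*$-connected) path of good vertices. The cleanest route is to fix, for each $k$, a nearest-neighbor path $\gamma_k$ inside $\mathcal S_k$ witnessing its connectedness (it is connected in $\ZZ$ by (a)), pick a point of $\gamma_{k+1}$ and follow a monotone lattice path toward the origin until it first exits $\ballZZ(0,K_{N,k})$; by (c) for $\mathcal S_k$ this crossing path intersects $\mathcal S_k$, and one checks the crossing path can be taken to consist of good vertices only up to that intersection point by rerouting along $\mathcal S_k$ — actually it is simpler to invoke (c) directly for a $*$-path and then upgrade using the fact that $\mathcal S_k$ itself is nearest-neighbor connected, so a $*$-contact with $\mathcal S_k$ can be turned into a nearest-neighbor detour. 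I would then record the resulting bound $\P[\mathcal H_N^c]\leq Ce^{-N^c}$ with a (smaller) $c$ and (larger) $C$, as stated.

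\begin{proof}[Proof of Claim~\ref{claim:HN:proba}]
Fix $R$ and $u\le u_1$ as in \eqref{eq:Ru_1}, and let $c=c(d)>0$, $C=C(d)<\infty$ be the constants from Corollary~\ref{cor:bad:*paths}.

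For $1\le k\le k_N$ we have $K_{N,k}=K_{N,k-1}+k_N$, so by Corollary~\ref{cor:bad:*paths}(1), applied with $N$ replaced by $K_{N,k-1}$ and $n$ replaced by $k_N$, the annulus $\ballZZ(0,K_{N,k})\setminus\ballZZ(0,K_{N,k-1})$ contains a set $\mathcal S_k\subset\ZZ$ that is connected in $\ZZ$, consists of good vertices for $\local^u$, and such that every $*$-path in $\ZZ$ from $\ballZZ(0,K_{N,k-1}+1)$ to $\intbb\ballZZ(0,K_{N,k})$ intersects $\mathcal S_k$, with
\[
\mathbb P\big[\,\mathcal S_k\text{ does not exist}\,\big]\ \le\ C\cdot|\ballZZ(0,K_{N,k-1}+1)|\cdot e^{-k_N^c}.
\]
Since $K_{N,k-1}+1\le 2N$ for all $1\le k\le k_N$, we have $|\ballZZ(0,K_{N,k-1}+1)|\le C'N^d$, and since $k_N=\lfloor\sqrt N\rfloor\ge \tfrac12\sqrt N$ for $N$ large (and the estimate is trivial for bounded $N$ after enlarging $C$), a union bound over $1\le k\le k_N\le\sqrt N$ gives
\begin{equation}\label{eq:HN:allshells}
\mathbb P\big[\,\exists\,1\le k\le k_N:\ \mathcal S_k\text{ does not exist}\,\big]\ \le\ C'' N^{d+1} e^{-(\sqrt N/2)^{c}}\ \le\ C\,e^{-N^{c/3}},
\end{equation}
after adjusting the constants $C$ and $c$.

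It remains to check that on the complement of the event in \eqref{eq:HN:allshells}, i.e.\ when all the sets $\mathcal S_1,\dots,\mathcal S_{k_N}$ exist, the event $\mathcal H_N$ occurs. Properties (a), (b), (c) in Definition~\ref{def:HN}(2) hold by construction (note $\ballZZ(0,K_{N,k-1})\subseteq\ballZZ(0,K_{N,k-1}+1)$, so any $*$-path from $\ballZZ(0,K_{N,k-1})$ to $\intbb\ballZZ(0,K_{N,k})$ that leaves $\ballZZ(0,K_{N,k-1})$ passes through $\ballZZ(0,K_{N,k-1}+1)$ and hence meets $\mathcal S_k$). To verify Definition~\ref{def:HN}(1), and that the $\mathcal S_k$ are parts of a single connected component of good vertices, we show that $\mathcal S_k$ and $\mathcal S_{k+1}$ are connected to each other by a nearest-neighbor path of good vertices, for each $1\le k\le k_N-1$. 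Since $\mathcal S_{k+1}$ is connected in $\ZZ$, pick $y'\in\mathcal S_{k+1}$ and a monotone nearest-neighbor path $\pi'$ in $\ZZ$ from $y'$ toward $0$, stopped upon first entering $\ballZZ(0,K_{N,k})$; concatenating $\pi'$ with a short path inside $\ballZZ(0,K_{N,k})$ reaching $\ballZZ(0,K_{N,k-1}+1)$ yields a $*$-path in $\ZZ$ from $\ballZZ(0,K_{N,k-1}+1)$ to a point of $\mathcal S_{k+1}\subset\intbb\ballZZ(0,K_{N,k+1})$, which crosses the annulus of $\mathcal S_k$ and therefore meets $\mathcal S_k$ by property (c). Thus $\mathcal S_k$ and $\mathcal S_{k+1}$ contain $*$-neighboring good vertices; since each $\mathcal S_k$ is itself nearest-neighbor connected in $\ZZ$, the union $\bigcup_{k=1}^{k_N}\mathcal S_k$ is contained in a single nearest-neighbor connected component of good vertices inside $\ballZZ(0,2N)$ (a $*$-contact between two nearest-neighbor connected good sets can be replaced by a nearest-neighbor detour within their union using only good vertices). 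The same reasoning applied to $\mathcal S_1$ connects it to $\ballZZ(0,N)$ through good vertices, and $\mathcal S_{k_N}$ reaches $\intbb\ballZZ(0,2N)$ since it lies in the outermost annulus; hence Definition~\ref{def:HN}(1) holds. Therefore $\mathcal H_N$ occurs, and combining with \eqref{eq:HN:allshells} we obtain \eqref{eq:HN:proba}.
\end{proof}
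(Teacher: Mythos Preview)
Your treatment of item~2 of Definition~\ref{def:HN} is correct and is exactly the intended approach: apply Corollary~\ref{cor:bad:*paths}(1) in each annulus and take a union bound over $1\le k\le k_N$. The gap is in your derivation of item~1 from item~2.

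The gluing argument fails at several points. First, the assertion that ``$\mathcal S_k$ and $\mathcal S_{k+1}$ contain $*$-neighboring good vertices'' does not follow from your construction: the monotone path from $y'\in\mathcal S_{k+1}$ toward the origin meets $\mathcal S_k$ at some vertex $z'$, but the portion of the path between $y'$ and $z'$ may be long and may pass through bad vertices, so $z'$ need not be adjacent to $\mathcal S_{k+1}$ at all. Second, even if two nearest-neighbor-connected good sets happened to be $*$-adjacent, their union need not be nearest-neighbor connected (two diagonal singletons in $\Z^2$ is a counterexample), so the ``nearest-neighbor detour within their union'' step is false in general. Third, $\mathcal S_{k_N}$ lies in $\ballZZ(0,K_{N,k_N})$ with $K_{N,k_N}=N+k_N^2\le 2N$, and in any case a shell need not touch the boundary of its annulus, so it does not automatically reach $\intbb\ballZZ(0,2N)$. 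Similarly, nothing connects $\mathcal S_1$ to $\ballZZ(0,N)$ through good vertices.

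The fix is to handle item~1 separately, using Corollary~\ref{cor:bad:*paths}(2): the infinite connected component $\mathcal G^\infty$ of good vertices intersects $\ballZZ(0,N)$ except with probability at most $C\sum_{M\ge N}e^{-M^c}\le C'e^{-N^{c'}}$. On that event, an infinite nearest-neighbor path of good vertices starts in $\ballZZ(0,N)$ and in particular crosses to $\intbb\ballZZ(0,2N)$, giving item~1 directly. Once item~1 holds, the crossing path necessarily meets every $\mathcal S_k$ (it is a $*$-path satisfying condition~(c)), which is how \eqref{def:GN} is obtained; but this is a consequence of item~1, not a route to proving it.
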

Note that if $\mathcal H_N$ occurs, then 
for each $1\leq k\leq k_N$, 
\begin{equation}\label{eq:Sk}
\begin{array}{c}
\text{$\mathcal S_k$ can be defined as the {\it unique} connected component of good vertices}\\
\text{in $\ballZZ(0,K_{N,k})\setminus\ballZZ(0,K_{N,k-1})$ such that}\\
\text{every $*$-path in $\ZZ$ from $\ballZZ(0,K_{N,k-1})$ to $\intbb\ballZZ(0,K_{N,k})$ intersects $\mathcal S_k$.}
\end{array}
\end{equation}
We will use this definition of $\mathcal S_k$ here.
If $\mathcal H_N$ does not occur, we set $\mathcal S_k = \emptyset$ for all $k$. 
Note that by Definition \ref{def:HN} the sets $\mathcal S_k$ are disjoint subsets of $\ZZ$, and for each $1\leq k\leq k_N$, 
\begin{equation}\label{def:GN}
\text{$\mathcal S_1,\ldots, \mathcal S_k$ are in the same connected component of 
good vertices in $\ballZZ(0,K_{N,k})$.}
\end{equation}

\medskip

In terms of connectivities in $\Z^d$, the key property of $\mathcal S_k$ can be stated as follows: 
if the event $\mathcal H_N$ occurs, then for each $1\leq k\leq k_N$,
\begin{equation}\label{eq:Skproperty}
\begin{array}{c}
\text{every nearest-neighbor path in $\Z^d$ from $\ballZ(0,(2R+1) K_{N,k-1})$}\\ 
\text{to $\intb\ballZ(0,(2R+1) K_{N,k})$ intersects the set $\cup_{x'\in\mathcal S_k}\ballZ(x',R)$.}
\end{array}
\end{equation}
By \eqref{eq:Sk}, \eqref{def:GN},  Definition~\ref{def:edges} and Definition~\ref{def:good},
 if $\mathcal H_N$ occurs, then 
for each $1\leq k\leq k_N$,
\begin{equation}\label{eq:Ck}
\begin{array}{c}
\text{the sets $\cup_{x_1'\in\mathcal S_1}\edges(x_1'), \ldots, \cup_{x_k'\in\mathcal S_k}\edges(x_k')$ 
are in the same connected component}\\ 
\text{of $\mathcal V^u\cap\ballZ(0,(2R+1)K_{N,k}+R)$, 
which we denote by $\mathcal C_k$.}
\end{array}
\end{equation}
If $\mathcal H_N$ does not occur, we define $\mathcal C_k = \emptyset$. 
By \eqref{eq:Ck}, 
\begin{equation}\label{eq:CkCk+1}
\text{$\mathcal C_{k}\subseteq \mathcal C_{k+1}$ for all $1\leq k\leq k_N-1$.}
\end{equation}

As we will see in Section~\ref{sec:proof:sts:int:2}, in order to prove \eqref{eq:sts:inerlacement:2}, 
it suffices to show that, with high probability, $\mathcal C_{k_N}$ is the only connected component of 
$\mathcal V^u\cap\ballZ(0,(2R+1)\cdot 2N+R)$ that 
intersects $\ballZ(0,(2R+1)\cdot N)$ and $\intb \ballZ(0,(2R+1)\cdot 2N)$. 
To prove the latter statement, we need a more general definition. 

\begin{definition}\label{def:Ak}
For $z\in \ballZ(0,(2R+1)\cdot N)$ and $1\leq k\leq k_N$, let $\mathcal A_{z,k}$ be the event that
\begin{enumerate}\itemsep1pt
\item 
$\mathcal H_N$ occurs,
\item
$z$ is connected to $\intb \ballZ(0,(2R+1)\cdot K_{N,k})$ by a nearest-neighbor path in $\mathcal V^u$, 
\item
$z\notin \mathcal C_k$, 
\end{enumerate}
and let $\mathcal A_{z,0}=\mathcal H_N$. 
\end{definition}
The main result of this section is the following lemma. 
\begin{lemma}\label{l:eventA:estimate}
For $R$ and $u$ as in \eqref{eq:Ru_1}, there exists 
$\gamma = \gamma(d,R)>0$ such that 
\begin{equation}\label{eq:strongsupercriticality}
\mathbb P\left[\mathcal A_{z,k_N} \right] \leq (1-\gamma)^{k_N}, \quad
\text{for all $N\geq 1$ and $z\in \ballZ(0,(2R+1)\cdot N)$.} 
\end{equation}
\end{lemma}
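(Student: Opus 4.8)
\textbf{Proof plan for Lemma~\ref{l:eventA:estimate}.}
The plan is to prove the estimate by induction on $k$, establishing a one-step contraction
\begin{equation}\label{plan:onestep}
\mathbb P\left[\mathcal A_{z,k}\right]\leq (1-\gamma)\cdot\mathbb P\left[\mathcal A_{z,k-1}\right],\qquad 1\leq k\leq k_N,
\end{equation}
for a constant $\gamma=\gamma(d,R)>0$, which immediately gives \eqref{eq:strongsupercriticality} since $\mathcal A_{z,0}=\mathcal H_N$ has probability at most $1$. The key is to recognise that, on $\mathcal A_{z,k-1}$, since $\mathcal H_N$ occurs, any nearest-neighbor vacant path from $z$ out to $\intb\ballZ(0,(2R+1)K_{N,k})$ must (by \eqref{eq:Skproperty}) pass through a box $\ballZ(x',R)$ for some good $x'\in\mathcal S_k$; and merging the vacant cluster of $z$ with $\mathcal C_k$ inside such a good box is an event of conditionally positive probability, uniformly in the surrounding configuration. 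Concretely, I would fix $x'\in\mathcal S_k$, look at the box $\cube(x')=\ballZ(x',R)$, and condition on everything outside it — more precisely on $\mathcal F^\outt_{\cube(x'),u}$ together with the entrance/exit data $\mathcal F^{AB}_{\cube(x'),u}$ — and show that with probability bounded below, the interlacement excursions inside $\cube(x')$ avoid the frame $\edges(x')$ and a connecting tube, so that the vacant path reaching $\cube(x')$ links up with $\edges(x')\subseteq\mathcal C_k$.

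The technical core, to be extracted as the auxiliary Lemmas~\ref{l:Pk}, \ref{l:Pkbad} (referenced in the overview), is this uniform lower bound on the conditional success probability. Here is where Lemma~\ref{l:condindep}(a) and formula \eqref{eq:EinnK:proba} enter: conditionally on $\mathcal F^{AB}_{\cube(x'),u}$, the law of the interior excursions $(X^\inn_{i,j})$ is a product of bridge-type laws $P_{x_{i,j}}[\,\cdot\mid X(T_K-1)=y_{i,j}]$ that does \emph{not} depend on $\mathcal F^\outt_{\cube(x'),u}$. Thus, to bound the chance that all interior excursions avoid $\edges(x')$ (plus a fixed connecting corridor), I need two ingredients: (i) the \emph{number} of excursions $\sum_i M_i$ inside $\cube(x')$ is bounded, which holds on the event that $x'$ is good by property (2) of Definition~\ref{def:good} together with the local-time bound $\sum_{x\in\dcube(x')}\local^u_x\le R^{d-1}$ controlling $\sum_i\#\{$times $X^\inn_{i,j}$ meets $\dcube(x')\}$; and (ii) for a single random-walk excursion entering and exiting through prescribed points of $\dcube(x')$, the conditional probability of staying in $\cube(x')\setminus(\edges(x')\cup\text{corridor})$ until exit is bounded below by a constant $c(d,R)>0$. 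Point (ii) is a standard Green's-function / Harnack estimate for the simple random walk in the fixed finite set $\cube(x')$ — the denominators $P_{x_{i,j}}[X(T_K-1)=y_{i,j}]$ in \eqref{eq:EinnK:proba} are bounded below uniformly over the finitely many boundary point pairs, and the numerators are bounded below by restricting to a fixed good excursion path. Taking the product over the at most (bounded) many excursions gives $\gamma>0$ depending only on $d$ and $R$.

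The remaining bookkeeping is to turn this ``fresh chance in one good box'' into the clean recursion \eqref{plan:onestep}. I would do this by an exploration/revealing argument: on $\mathcal A_{z,k-1}$, explore the vacant cluster of $z$ outward; by \eqref{eq:Skproperty} it reaches some good $x'\in\mathcal S_k$ before it can reach $\intb\ballZ(0,(2R+1)K_{N,k})$, and one can choose the \emph{first} such box in a way measurable with respect to the explored (exterior) information, so that the conditioning is on a $\sigma$-algebra contained in $\sigma(\mathcal F^\outt_{\cube(x'),u},\mathcal F^{AB}_{\cube(x'),u})$. On that conditioning, the event $\{z\in\mathcal C_k\}$ — i.e.\ the merger happens — has probability at least $\gamma$, hence $\mathbb P[\mathcal A_{z,k}\mid \text{explored info}]\le 1-\gamma$ on $\mathcal A_{z,k-1}$, and integrating gives \eqref{plan:onestep}. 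I expect the main obstacle to be point (i)/(ii) above done carefully in tandem: one must choose the connecting corridor inside $\cube(x')$ and the boundary data so that the bound in (ii) is genuinely uniform over \emph{all} admissible entrance/exit configurations arising on the good-box event, while simultaneously ensuring the corridor meets both the incoming vacant path's entry point on $\dcube(x')$ and the frame $\edges(x')$ — this is exactly the place where the lack of the finite-energy property bites, and where property (1) of Definition~\ref{def:good} (the frame is vacant, so it is already part of $\mathcal C_k$ and need not be ``created'') is what saves the argument.
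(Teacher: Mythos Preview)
Your plan is essentially the paper's proof: the one-step contraction \eqref{plan:onestep} via a uniform conditional lower bound on the ``merger'' event in a good box of $\mathcal S_k$, using Lemma~\ref{l:condindep}(a) and \eqref{eq:EinnK:proba}, with the number of excursions controlled by property~(2) of Definition~\ref{def:good} and the tunnel/corridor construction you describe. The paper organizes this exactly as you outline, via the auxiliary Lemmas~\ref{l:eventA:estimate:k}, \ref{l:condindep:good}, \ref{l:Pk}, \ref{l:Pkbad}.

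One point needs tightening. You write that the exploration/choice of the good box $x'$ is measurable with respect to a $\sigma$-algebra contained in $\sigma(\mathcal F^\outt_{\cube(x'),u},\mathcal F^{AB}_{\cube(x'),u})$. This is not quite right: knowing that $x'\in\mathcal S_k$ (indeed, knowing $\mathcal S_k$ at all) requires knowing the good/bad status of $x'$, which is $\mathcal F^\inn_{\cube(x'),u}$-measurable, not $\mathcal F^\outt$-measurable. The paper therefore conditions on the larger $\sigma$-algebra $\sigma(\Sigmagood,\Sigmaall_k,\mathcal F^\outt_{\cube(x'),u})$, which \emph{does} carry interior information about $\cube(x')$ (namely $\{x'\text{ is good}\}$), and then proves a dedicated lemma (Lemma~\ref{l:condindep:good}) showing that this extra interior conditioning only changes the conditional law of $X^\inn_{\cube(x')}$ by the factor $\mathbb P[\{x'\text{ good}\}\mid\mathcal F^{AB}_{\cube(x'),u}]^{-1}\leq 1$ on the event of interest. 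With that correction, your lower bound from (i)/(ii) goes through unchanged, and the construction of the specific $\rho^\inn$ must be done so that $\rho^\inn\in\mathcal T^\inn_{x_k'}$ (i.e., the reconfigured excursions still certify $x_k'$ as good), which is exactly what the paper's Lemma~\ref{l:Pkbad} verifies.
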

\begin{proof}[Proof of Lemma~\ref{l:eventA:estimate}]
Fix $z\in \ballZ(0,(2R+1)\cdot N)$. 
Without loss of generality we may assume that $\mathbb P\left[\mathcal A_{z,k_N}\right]\neq 0$. 
By \eqref{eq:CkCk+1}, we have the inclusion 
\begin{equation}\label{eq:AkAk+1}
\text{$\mathcal A_{z,k}\subseteq\mathcal A_{z,k-1}$,  for all $1\leq k\leq k_N$.}
\end{equation}
Using \eqref{eq:AkAk+1}, we obtain 
\begin{equation*}
\mathbb P\left[\mathcal A_{z,k_N}\right]
=
\mathbb P\left[\mathcal H_N\right]\cdot \prod_{k=1}^{k_N} \mathbb P\left[\mathcal A_{z,k}~|~\mathcal A_{z,k-1} \right] .\
\end{equation*}
To complete the proof of \eqref{eq:strongsupercriticality} it suffices to show that for all 
$z\in \ballZ(0,(2R+1)\cdot N)$, $1\leq k\leq k_N$ and some $\gamma = \gamma(d,R)>0$, 
\begin{equation}\label{eq:strongsupercriticality:1}
\mathbb P\left[\mathcal A_{z,k}~|~\mathcal A_{z,k-1} \right] \leq 1 - \gamma .\
\end{equation}
This follows from the more general Lemma~\ref{l:eventA:estimate:k} below. 
Before we state the lemma, we need some notation. 

\medskip

Define the random variable $\Sigmagood~:~\Omega\to\{0,1\}^{\ballZZ(0,2N)}$ which keeps track of good and bad vertices in 
$\ballZZ(0,2N)$ as 
\begin{equation}\label{def:sigmagood}
\Sigmagood = \left(\mathds{1}_{\{x'\mathrm{~is~good~for~}\local^u\}}~:~ x'\in\ballZZ(0,2N)\right) .\
\end{equation}
Note that 
\begin{equation}\label{eq:HNinsigmagood}
\text{$\mathcal H_N\in\sigma(\Sigmagood),\quad$ for all $N$,}  
\end{equation}
and, in particular, 
\begin{equation}\label{eq:Sksigmagood}
\text{for all $1\leq k\leq k_N$, the set $\mathcal S_k$ is measurable with respect to $\sigmagood$.}
\end{equation}
For $1\leq k\leq k_N$, if $\mathcal H_N$ occurs, 
\begin{equation}\label{def:Dk}
\begin{array}{c}
\text{let $\mathcal D_k$ be the (unique) connected component of $\Z^d \setminus \cup_{x'\in \mathcal S_k} \ballZ(x',R)$}\\
\text{ which contains the origin,}
\end{array}
\end{equation}
and let $\mathcal D_k = \ballZ(0,(2R+1)\cdot K_{N,k} -R)$ otherwise. 
\begin{figure}
\begin{center}
\psfragscanon
\includegraphics[height=8cm]{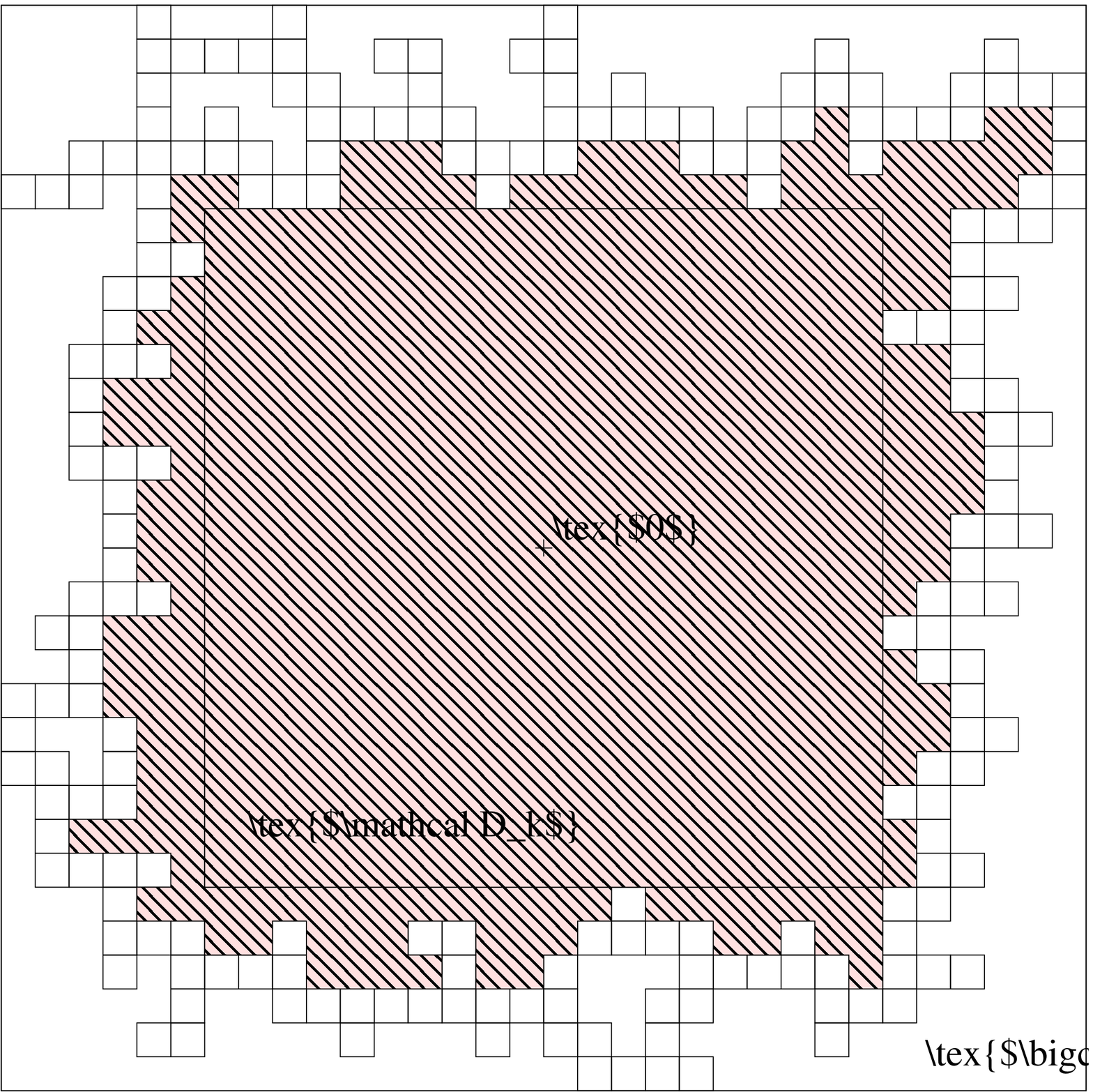}
\caption{
The inner and outer boxes are $\ballZ(0,(2R+1)\cdot K_{N,k-1} + R)$ and $\ballZ(0,(2R+1)\cdot K_{N,k} + R)$, respectively. 
The set $\mathcal D_k\subseteq \Z^d$ is the unique connected component of 
$\Z^d\setminus \cup_{x'\in\mathcal S_k}\ballZ(x',R)$, which contains the origin.}
\end{center}
\end{figure}
By 
\eqref{eq:Sksigmagood}, 
\begin{equation}\label{eq:Dksigmagood}
\text{$\mathcal D_k$ is measurable with respect to $\sigmagood$, for all $1\leq k\leq k_N$.}
\end{equation}
By \eqref{eq:Skproperty}, 
\begin{equation}\label{eq:Dk:location}
\ballZ(0,(2R+1)\cdot K_{N,k-1}+R) \subseteq 
\mathcal D_k \subseteq \ballZ(0,(2R+1)\cdot K_{N,k} -R) .\
\end{equation}
Define the random variables $\Sigmaall_k ~:~\Omega\to\{0,1\}^{\ballZ(0,(2R+1)\cdot K_{N,k} -R)}$ 
which keep track of the interlacement configuration inside $\mathcal D_k$
as 
\begin{equation}\label{def:Sigmaall}
\Sigmaall_k = \left(\mathds{1}_{\{x\in\mathcal I^u\cap\mathcal D_k\}}~:~x\in \ballZ(0,(2R+1)\cdot K_{N,k} -R) \right),
\quad 1\leq k\leq k_N .\ 
\end{equation}
The following lemma implies \eqref{eq:strongsupercriticality:1}, 
as we show in \eqref{eq:l:eventA:estimate:k:implies:eq:strongsupercriticality:1}. 
\begin{lemma}\label{l:eventA:estimate:k}
There exists $\gamma=\gamma(d,R)>0$ such that for all $z\in \ballZ(0,(2R+1)\cdot N)$ and $1\leq k\leq k_N$, 
\begin{equation}\label{eq:strongsupercriticality:k}
\mathds{1}_{\mathcal H_N} \cdot \mathbb P\left[\mathcal A_{z,k}~|~\Sigmagood,\Sigmaall_k \right] \leq 1-\gamma, \quad\quad
\mathbb P\mbox{-a.s.}
\end{equation}
\end{lemma}
We postpone the proof of Lemma~\ref{l:eventA:estimate:k} until Section~\ref{sec:eventA}, and now complete the proof 
of Lemma~\ref{l:eventA:estimate} by showing how Lemma~\ref{l:eventA:estimate:k} implies \eqref{eq:strongsupercriticality:1}.

By \eqref{eq:Ck}, Definition~\ref{def:Ak}, \eqref{eq:HNinsigmagood}, \eqref{eq:Dksigmagood}, and \eqref{eq:Dk:location}, we have
\begin{equation}\label{eq:Aksigma}
\text{$\mathcal A_{z,k-1}\in \sigma(\Sigmagood,\Sigmaall_{k})$, for each $1\leq k\leq k_N$.}
\end{equation}
Therefore, for each $1\leq k\leq k_N$, 
\begin{equation}\label{eq:l:eventA:estimate:k:implies:eq:strongsupercriticality:1}
\mathbb P\left[\mathcal A_{z,k} \right] 
\stackrel{\eqref{eq:AkAk+1}, \eqref{eq:Aksigma}}= 
\mathbb E\left[\mathds{1}_{\mathcal A_{z,k-1}}\cdot 
\mathbb P\left[\mathcal A_{z,k}~|~ \Sigmagood,\Sigmaall_{k}\right]\right]
\stackrel{\eqref{eq:strongsupercriticality:k}}\leq (1-\gamma)\cdot \mathbb P\left[\mathcal A_{z,k-1}\right] .\
\end{equation}
This implies \eqref{eq:strongsupercriticality:1} and completes the proof of Lemma~\ref{l:eventA:estimate}
subject to Lemma~\ref{l:eventA:estimate:k}, which will be proved in Section~\ref{sec:eventA}. 
\end{proof}

\bigskip

\subsection{Proof of Lemma~\ref{l:eventA:estimate:k}}\label{sec:eventA}

In this section we prove Lemma~\ref{l:eventA:estimate:k}. 
Recall the definitions of the configuration $\Sigmagood$ of good and bad vertices of $\ballZZ(0,2N)$
(see \eqref{def:sigmagood}), the event $\mathcal H_N$ (see Definition~\ref{def:HN})
 guaranteeing the presence of 
$k_N=\lfloor \sqrt{N} \rfloor$ connected shells $\mathcal S_k$, $1 \leq k \leq k_N$  of good boxes 
(see \eqref{eq:Sk}), the domain $\mathcal D_k\subseteq \Z^d$ surrounded by  
$\cup_{x' \in \mathcal S_k} \ballZ(x',R)$ (see \eqref{def:Dk}),
 and the configuration $\Sigmaall_k$ of occupied/vacant vertices of $\mathcal D_k$ (see \eqref{def:Sigmaall}).

\medskip

The occurrence of event $\mathcal A_{z,k}$ guarantees the existence of a vacant path in $\mathcal D_k$ 
from $z$ to $\intb\mathcal D_k$ with certain restrictions on the location of the end point of this path on $\intb \mathcal D_k$. 
These properties are reflected in the following definition.

\begin{definition}\label{def:tildeAk}
For $z\in \ballZ(0,(2R+1)\cdot N)$, and $1\leq k\leq k_N$, 
let $\widetilde {\mathcal A}_{z,k}$ be the event that 
(a) $\mathcal H_N$ occurs, and 
(b) there exists a nearest-neighbor path $\pi_k$ in $\mathcal D_k$ 
from $z$ to a vertex $x_k\in\intb\mathcal D_k\setminus \extb\cup_{x'\in\mathcal S_k} \edges(x')$ 
such that every vertex $x$ along this path (including $x_k$)
 satisfies $\Sigmaall_k(x) = 0$ (i.e. $x\in \mathcal{V}^u$, c.f. \eqref{def:Sigmaall}).
If there are several such paths, we pick one in a predetermined, non-random fashion. 
\end{definition}
The properties of $\widetilde {\mathcal A}_{z,k}$ that are useful to us are the following: 
\begin{equation}\label{eq:tildeAk:meas}
\text{$\widetilde {\mathcal A}_{z,k}$ (and hence $\pi_k$ and $x_k$) 
is measurable with respect to $\sigma(\Sigmagood,\Sigmaall_k)$} ,\ 
\end{equation}
and, by Definition~\ref{def:Ak},
\begin{equation}\label{eq:AktildeAk}
\mathcal A_{z,k} \subseteq \widetilde {\mathcal A}_{z,k} .\
\end{equation}
Indeed, \eqref{eq:tildeAk:meas} is immediate from Definition~\ref{def:tildeAk}. 
To see that \eqref{eq:AktildeAk} holds, note that 
if $\mathcal A_{z,k}$ occurs, then by \eqref{eq:Dk:location} $z$ is connected to $\intb\mathcal D_k$ 
by a nearest-neighbor path of vertices $x$ with $\Sigmaall_k(x) = 0$. 
However, by \eqref{eq:Ck}, $\cup_{x'\in\mathcal S_k} \edges(x')\subseteq \mathcal C_k$ and, by Definition~\ref{def:Ak}, 
$z\notin\mathcal C_k$,  therefore any such path must avoid $\extb\cup_{x'\in\mathcal S_k} \edges(x')$. 
This implies \eqref{eq:AktildeAk}.

\medskip

By the definition of
 $\widetilde {\mathcal A}_{z,k}$, $x_k\in\intb\mathcal D_k\setminus\extb\cup_{x'\in\mathcal S_k} \edges(x')$. 
Therefore, there exists a unique 
\begin{equation}\label{def:xkQk}
\begin{array}{c}
\text{$x_k'\in\mathcal S_k$ such that 
$x_k$ belongs to the exterior boundary of $\cube_k = \cube(x_k')$ (see \eqref{def:cube})}\\
\text{and is not adjacent to any of the vertices in $\edges(x_k')$.}
\end{array}
\end{equation}
Also there exists a (unique) $\widetilde x_k\in \cube(x_k')\setminus\edges(x_k')$ such that 
$x_k\sim\widetilde x_k$. 
\begin{figure}
\begin{center}
\psfragscanon
\includegraphics[height=8cm]{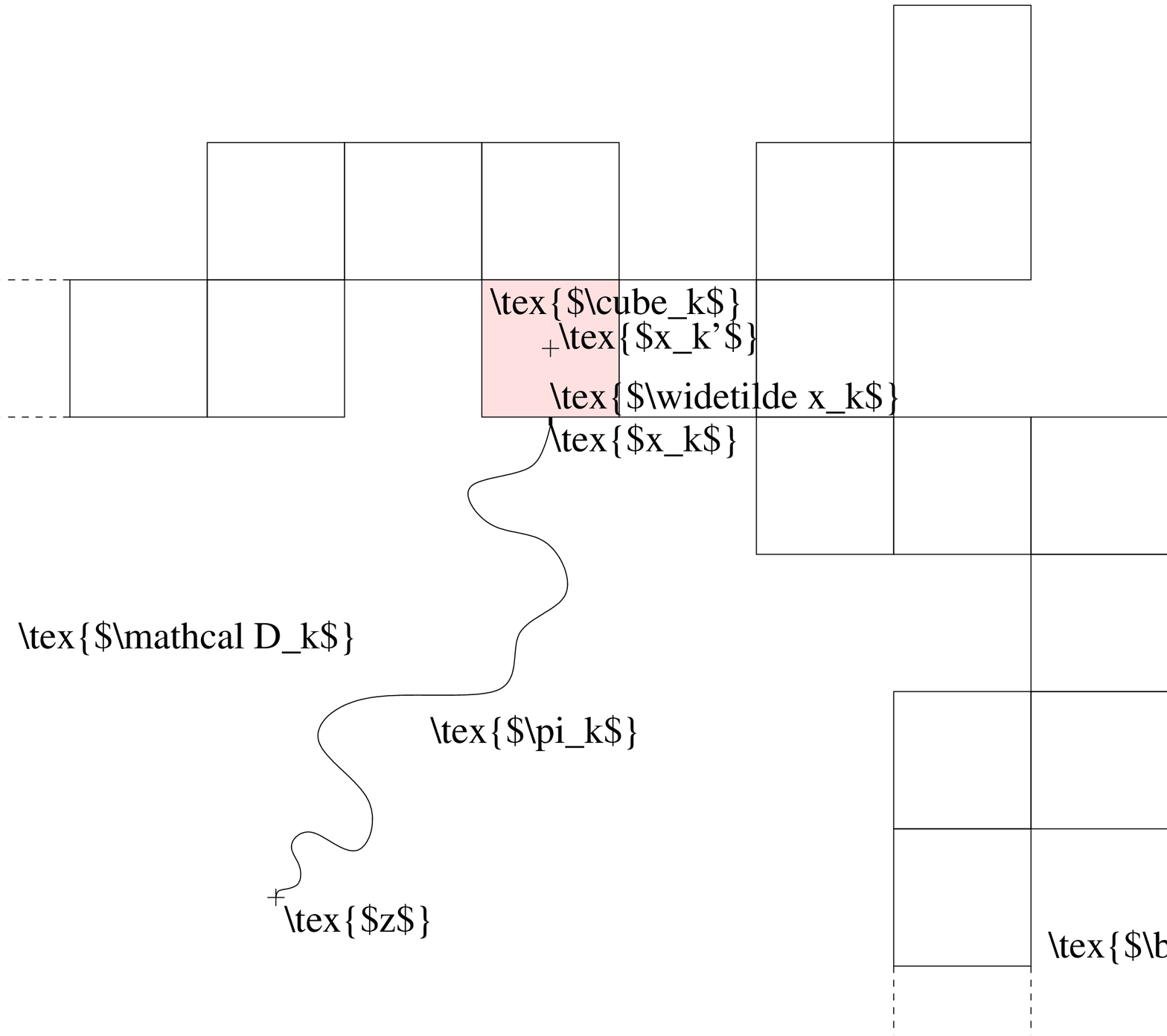}
\caption{If the event $\widetilde {\mathcal A}_{z,k}$ occurs, there exists a vacant path $\pi_k$ from 
$z$ to $x_k\in \intb\mathcal D_k$ in $\mathcal D_k$ such that 
$x_k\notin\extb\cup_{x'\in\mathcal S_k}\edges(x')$. 
There exists a unique $x_k'\in\mathcal S_k$ such that $x_k\in\extb\cube(x_k')$ (and $x_k\notin\extb\edges(x_k')$). 
The cube $\cube(x_k')$ is denoted by $\cube_k$. 
The unique neighbor of $x_k$ in $\intb \cube_k$ is denoted by $\widetilde x_k$.}
\end{center}
\end{figure}
Moreover, since $\widetilde x_k\notin \edges(x_k')$, 
\begin{equation}\label{eq:xkwidetildexk:uniqueness}
\text{$x_k$ is the only nearest-neighbor of $\widetilde x_k$ which is outside $\cube(x_k')$.}
\end{equation}

\bigskip

The key step in the proof of Lemma~\ref{l:eventA:estimate:k} is Lemma~\ref{l:Pkbad}, 
in which we show that given the configurations $\Sigmagood$ of good and bad vertices of $\ballZZ(0,2N)$ 
and $\Sigmaall_k$ of occupied/vacant vertices of $\mathcal D_k$ satisfying the event 
$\widetilde {\mathcal A}_{z,k}$, and given 
the $\sigma$-algebra generated by the interlacement excursions outside $\cube(x_k')$, 
with uniformly positive probability there is a realization of the interlacement excursions inside $\cube(x_k')$ 
such that $x_k'$ is good, and $x_k$ is connected to $\edges(x_k')$ in $\mathcal V^u\cap(\cube(x_k')\cup\{x_k\})$. 
Once this is done, Lemma~\ref{l:eventA:estimate:k} immediately follows, as we show after the statement 
of Lemma~\ref{l:Pkbad}. To state Lemma~\ref{l:Pkbad}, we need some notation.

\bigskip

\begin{definition}\label{def:XinnKXABK}
Let $K\subset\subset\Z^d$. In the notation of Section~\ref{sec:condindep:2}, let $X^\inn_K$ be the (random) 
vector 
\[
X^\inn_K = \left(X^\inn_{i,j}~:~1\leq i\leq N_{K,u},~ 1\leq j\leq M_i\right)
\]
of the excursions inside $K$ of the interlacement trajectories from the support of $\omega_{K,u}$ 
(numbered in order of increase of their labels), and 
$X^{AB}_K$ the vector
\[
X^{AB}_K = \left((X_i(A_{i,j}),X_i(B_{i,j}))~:~1\leq i\leq N_{K,u},~ 1\leq j\leq M_i \right)
\]
of start and end points of all these excursions. 
Note that 
\begin{equation}\label{eq:XinnKXABK:meas}
\text{$X^\inn_K$ is measurable with respect to $\mathcal F^\inn_{K,u}$, and 
$X^{AB}_K$ with respect to $\mathcal F^{AB}_{K,u}$,}
\end{equation} 
with $\mathcal F^\inn_{K,u}$ and $\mathcal F^{AB}_{K,u}$ defined in Section~\ref{sec:condindep:3}.

\end{definition}
\begin{definition}\label{def:Tinnx'}
For $x'\in\ZZ$, let 
$\mathcal T^\inn_{x'} = \mathcal T^\inn_{x'}(X^{AB}_{\cube(x')})$ be the set of all vectors 
\[
\tau^\inn = \left(\tau^\inn_{i,j}~:~1\leq i\leq N_{\cube(x'),u},~ 1\leq j\leq M_i\right)
\]
of finite nearest-neighbor trajectories from $X_i(A_{i,j})$ to $X_i(B_{i,j})$ inside $\cube(x')$ 
such that
\begin{itemize}
\item[(a)]
all the $\tau^\inn_{i,j}$ avoid $\edges(x')$, and 
\item[(b)]
the total number of visits to $\intb\cube(x')$ of all the $\tau^\inn_{i,j}$ is at most $R^{d-1}$.
\end{itemize}
Note that 
\begin{equation}\label{eq:Tinnx':meas}
\text{$\mathcal T^\inn_{x'}$ is measurable with respect to $\mathcal F^{AB}_{\cube(x'),u}$} 
\end{equation}
(see Section~\ref{sec:condindep:3}), 
and by Definition~\ref{def:good}, for any $x'\in \ZZ$, 
\begin{equation}\label{eq:Tinn:good}
\{X^\inn_{\cube(x')} \in \mathcal T^\inn_{x'}\}= \{\text{$x'$ is good for $\local^u$}\} .\
\end{equation}
\end{definition}
\begin{claim}
Recall the definition of $x_k'$ and $\cube_k$ from \eqref{def:xkQk}. 

(1) If $\widetilde {\mathcal A}_{z,k}$ occurs, then for all $1\leq i\leq N_{\cube_k,u}$, $1\leq j\leq M_i$, and 
for any element $(X_i(A_{i,j}),X_i(B_{i,j}))$ of $X^{AB}_{\cube_k}$, we have
\begin{equation}\label{eq:Ek:property}
X_i(A_{i,j}), X_i(B_{i,j})\in \intb\cube_k\setminus\left(\edges(x_k')\cup\{\widetilde x_k\}\right) .\
\end{equation}
Indeed, if $\widetilde {\mathcal A}_{z,k}$ occurs, then $x_k'$ is good for $\local^u$ and, 
by Definition~\ref{def:tildeAk}, $x_k\in\mathcal V^u$. 
Together with \eqref{eq:xkwidetildexk:uniqueness}, this implies \eqref{eq:Ek:property}. 

(2) If $\widetilde {\mathcal A}_{z,k}$ occurs, then 
\begin{equation}\label{eq:PkTk}
X^\inn_{\cube_k} \in \mathcal T^\inn_{x_k'}. 
\end{equation}
Indeed, \eqref{eq:PkTk} follows from \eqref{eq:Tinn:good} and the fact that 
the vertex $x_k'$ is good for $\local^u$ when $\widetilde {\mathcal A}_{z,k}$ occurs. 
\end{claim}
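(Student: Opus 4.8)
The plan is to reduce both parts of the claim to a single fact: on the event $\widetilde{\mathcal A}_{z,k}$ the vertex $x_k'$ is good for $\local^u$. First I would record why this holds. By Definition~\ref{def:tildeAk}(a), $\widetilde{\mathcal A}_{z,k}$ contains the event $\mathcal H_N$, and by \eqref{def:xkQk} the vertex $x_k'$ lies in the shell $\mathcal S_k$, all of whose vertices are good for $\local^u$ once $\mathcal H_N$ occurs (Definition~\ref{def:HN}(2)(b), equivalently \eqref{eq:Sk}). Unpacking Definition~\ref{def:good}, goodness of $x_k'$ gives in particular $\edges(x_k')\subseteq\mathcal V^u$, i.e. no interlacement trajectory with label $\le u$ visits $\edges(x_k')$, and it also places $X^\inn_{\cube_k}$ in $\mathcal T^\inn_{x_k'}$ via \eqref{eq:Tinn:good}.

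For part (1) I would argue as follows. Each pair $(X_i(A_{i,j}),X_i(B_{i,j}))$ is the pair of starting and ending vertices of an excursion of $X_i$ inside $\cube_k$; since $A_{i,j}=R_j(X_i)$ and $B_{i,j}=D_j(X_i)-1$ in the notation of Section~\ref{sec:condindep:2}, both vertices belong to $\cube_k$ while $X_i(A_{i,j}-1)$ and $X_i(B_{i,j}+1)$ lie outside $\cube_k$, so each of the two vertices has a neighbour in $\cube_k^c$, hence lies in $\intb\cube_k$. Because these vertices are visited by an interlacement trajectory with label $\le u$, they lie in $\mathcal I^u$, and are therefore disjoint from $\edges(x_k')\subseteq\mathcal V^u$; this removes $\edges(x_k')$. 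To remove $\widetilde x_k$, suppose $X_i(A_{i,j})=\widetilde x_k$ for some $i,j$ (the case $X_i(B_{i,j})=\widetilde x_k$ is symmetric, using $X_i(B_{i,j}+1)$ in place of $X_i(A_{i,j}-1)$). Then $X_i(A_{i,j}-1)$ is a neighbour of $\widetilde x_k$ lying outside $\cube_k$, so by \eqref{eq:xkwidetildexk:uniqueness} it must equal $x_k$, whence $x_k\in\mathcal I^u$. But $\widetilde{\mathcal A}_{z,k}$ forces $x_k\in\mathcal V^u$: the path $\pi_k$ of Definition~\ref{def:tildeAk}(b) ends at $x_k\in\intb\mathcal D_k\subseteq\mathcal D_k$ with $\Sigmaall_k(x_k)=0$, which by \eqref{def:Sigmaall} means $x_k\notin\mathcal I^u$. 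This contradiction establishes \eqref{eq:Ek:property}.

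Part (2) is then immediate: \eqref{eq:Tinn:good} states that for every $x'\in\ZZ$ the event $\{X^\inn_{\cube(x')}\in\mathcal T^\inn_{x'}\}$ coincides with $\{x' \text{ is good for }\local^u\}$, so taking $x'=x_k'$ and using that $x_k'$ is good on $\widetilde{\mathcal A}_{z,k}$ yields $X^\inn_{\cube_k}\in\mathcal T^\inn_{x_k'}$, which is \eqref{eq:PkTk}. The only step that requires real care, and the one I expect to be the main obstacle, is the exclusion of $\widetilde x_k$ in part (1): this is exactly where the geometry engineered in \eqref{def:xkQk} is used — namely that $x_k$ is the sole exterior neighbour of $\widetilde x_k$, recorded in \eqref{eq:xkwidetildexk:uniqueness} — and one must keep in mind that the statement concerns the excursion endpoints of \emph{all} interlacement trajectories entering $\cube_k$, not merely those interacting with the vacant path $\pi_k$. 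Everything else is direct bookkeeping from the definitions of Section~\ref{sec:condindep} and of goodness.
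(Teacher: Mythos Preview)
Your proposal is correct and follows essentially the same approach as the paper: both argue that $x_k'$ is good (so the frame $\edges(x_k')$ is vacant) and that $x_k\in\mathcal V^u$, then use \eqref{eq:xkwidetildexk:uniqueness} to exclude $\widetilde x_k$, and invoke \eqref{eq:Tinn:good} for part (2). The paper's proof is just the terse two-line sketch embedded in the claim statement itself; your version makes the same steps explicit (in particular, spelling out why the excursion endpoints lie in $\intb\cube_k$ and why $\Sigmaall_k(x_k)=0$ together with $x_k\in\mathcal D_k$ forces $x_k\in\mathcal V^u$).
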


\bigskip

Lemma~\ref{l:eventA:estimate:k} follows from the next lemma. 
Recall Definition~\ref{def:Ak} of the event $\mathcal A_{z,k}$, 
the definition of $x_k'$ and $\cube_k$ from \eqref{def:xkQk}, and 
the notion of the $\sigma$-algebra $\mathcal F^\outt_{K,u}$ generated by the interlacement excursions outside of 
$K\subset\subset \Z^d$ and $\omega-\omega_{K,u}$ from Section~\ref{sec:condindep:3}. 

\begin{lemma}\label{l:Pkbad}
There exists $\gamma = \gamma(d,R)>0$ such that
for any $z\in\ballZ(0,(2R+1)\cdot N)$ and $1\leq k\leq k_N$, 
$\mathbb P$-almost surely,
for each realization of $\Sigmagood$, $\Sigmaall_k$, and $X^{AB}_{\cube_k}$ 
satisfying $\widetilde {\mathcal A}_{z,k}$, there exists  
\begin{equation}\label{eq:Pkbad:0}
\rho^\inn = \rho^\inn(\Sigmagood,\Sigmaall_k,X^{AB}_{\cube_k})\in\mathcal T^\inn_{x_k'}
\end{equation}
such that for all $x'\in\ZZ$,
\begin{equation}\label{eq:Pkbad:1}
\mathds{1}_{\widetilde {\mathcal A}_{z,k}\cap\{x_k' = x'\}}\cdot \mathbb P\left[
X^\inn_{\cube_k} = \rho^\inn
~|~\sigma(\Sigmagood, \Sigmaall_k, \mathcal F^\outt_{\cube(x'),u})\right]
\geq 
\mathds{1}_{\widetilde {\mathcal A}_{z,k}\cap\{x_k' = x'\}}\cdot\gamma 
\end{equation}
and 
\begin{equation}\label{eq:Pkbad:2}
\widetilde {\mathcal A}_{z,k}\cap\{X^\inn_{\cube_k} = \rho^\inn\} \subseteq 
\widetilde {\mathcal A}_{z,k}\setminus \mathcal A_{z,k} .\
\end{equation}
\end{lemma}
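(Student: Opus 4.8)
The plan is to use the conditional independence property from Lemma~\ref{l:condindep}(a) to reduce the problem to constructing a single good configuration of interlacement excursions inside the cube $\cube_k = \cube(x_k')$ that locally disconnects $x_k$ from $\edges(x_k')$, and then to give a uniform lower bound on the probability of this configuration using Lemma~\ref{l:condindep}(b) (specifically formula \eqref{eq:EinnK:proba}) together with the bounded number of excursions guaranteed by goodness of $x_k'$. First I would fix $x'\in\ZZ$ and work on the event $\widetilde{\mathcal A}_{z,k}\cap\{x_k'=x'\}$, which is $\sigma(\Sigmagood,\Sigmaall_k)$-measurable by \eqref{eq:tildeAk:meas} and \eqref{def:xkQk}; on this event $X^{AB}_{\cube_k}$ records excursion endpoints lying in $\intb\cube_k\setminus(\edges(x_k')\cup\{\widetilde x_k\})$ by \eqref{eq:Ek:property}, and by \eqref{eq:PkTk} the true excursion vector lies in $\mathcal T^\inn_{x_k'}$, so in particular $\mathcal T^\inn_{x_k'}\neq\emptyset$ and the number of excursions is controlled.

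The construction of $\rho^\inn$: given the endpoint data $X^{AB}_{\cube_k}$, for each excursion $i,j$ I would route a deterministic nearest-neighbor path $\rho^\inn_{i,j}$ from $X_i(A_{i,j})$ to $X_i(B_{i,j})$ that stays inside $\cube(x')$, avoids $\edges(x')$ entirely, and also avoids $\widetilde x_k$ — this is possible because $\cube(x')\setminus(\edges(x')\cup\{\widetilde x_k\})$ is connected (the frame is a thick collar near the boundary, so its complement inside the cube is a large connected core, and removing one further interior vertex $\widetilde x_k$ keeps it connected for $R$ large), and because all endpoints lie in $\intb\cube(x')\setminus(\edges(x')\cup\{\widetilde x_k\})$ by \eqref{eq:Ek:property}. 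One also arranges that each $\rho^\inn_{i,j}$ visits $\intb\cube(x')$ only at its two endpoints, so that the total number of visits to $\intb\cube(x')$ over all excursions is at most $2\sum_i M_i \le 2R^{d-1}$ — wait, this needs the bound $R^{d-1}$, so I would instead route paths that touch $\intb\cube(x')$ a bounded number of times and absorb the factor into the definition; more cleanly, since $\widetilde{\mathcal A}_{z,k}$ forces $x_k'$ to be good, hence $\sum_{x\in\dcube(x')}\local^u_x\le R^{d-1}$, and the endpoint multiset of $\rho^\inn$ is fixed by $X^{AB}_{\cube_k}$, one checks directly that a choice of $\rho^\inn$ with $\rho^\inn\in\mathcal T^\inn_{x_k'}$ exists. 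This gives \eqref{eq:Pkbad:0}. For \eqref{eq:Pkbad:2}: if $\widetilde{\mathcal A}_{z,k}$ occurs and additionally $X^\inn_{\cube_k}=\rho^\inn$, then no interlacement excursion meets $\edges(x')$, so $\edges(x')\subseteq\mathcal V^u$; moreover $\widetilde x_k\in\mathcal V^u$ and, since $x_k$ is the only neighbor of $\widetilde x_k$ outside $\cube(x')$ by \eqref{eq:xkwidetildexk:uniqueness} and $x_k\in\mathcal V^u$ on $\widetilde{\mathcal A}_{z,k}$, the vacant path $\pi_k$ extended by $\widetilde x_k$ reaches the interior of $\cube(x')$ and then, walking inside $\cube(x')$ through the vacant core, connects to $\edges(x')\subseteq\mathcal C_k$; hence $z\in\mathcal C_k$, which by Definition~\ref{def:Ak}(3) contradicts $\mathcal A_{z,k}$. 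This yields \eqref{eq:Pkbad:2}.

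For the quantitative bound \eqref{eq:Pkbad:1}: by Lemma~\ref{l:condindep}(a), $X^\inn_{\cube_k}$ is conditionally independent of $\mathcal F^\outt_{\cube(x'),u}$ given $\mathcal F^{AB}_{\cube(x'),u}$; since $\Sigmagood$ and $\Sigmaall_k$ are, on $\widetilde{\mathcal A}_{z,k}\cap\{x_k'=x'\}$, measurable with respect to $\sigma(\mathcal F^{AB}_{\cube(x'),u},\mathcal F^\outt_{\cube(x'),u})$ — indeed $\Sigmaall_k$ records the interlacement configuration in $\mathcal D_k\subseteq\ballZ(0,(2R+1)K_{N,k}-R)\subseteq\Z^d\setminus\cube(x')$ (strictly outside $\cube(x')$ by \eqref{eq:Dk:location} and \eqref{def:xkQk}, since $\cube(x')$ is centered at a vertex of $\mathcal S_k\subseteq\intbb\ballZZ(0,K_{N,k})\setminus\ballZZ(0,K_{N,k-1})$), and $\Sigmagood$ restricted to the relevant part is $\mathcal F^\outt$-measurable on this event — the conditional law of $X^\inn_{\cube_k}$ given $\sigma(\Sigmagood,\Sigmaall_k,\mathcal F^\outt_{\cube(x'),u})$ coincides, on this event, with its conditional law given $\mathcal F^{AB}_{\cube(x'),u}$. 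That law is the product over $i,j$ of $P_{x_{i,j}}[(X(t):0\le t\le T_K-1)=\cdot\,]/P_{x_{i,j}}[X(T_K-1)=y_{i,j}]$ by \eqref{eq:EinnK:proba}. Each factor for the specific trajectory $\rho^\inn_{i,j}$ is bounded below by $(2d)^{-|\rho^\inn_{i,j}|}$ times at most $1$, and $|\rho^\inn_{i,j}|\le C(d)R^d$ since each path stays in $\cube(x')$ which has $O(R^d)$ vertices and can be chosen self-avoiding; the number of excursions $\sum_i M_i$ is at most the number of visits to $\intb\cube(x')$, which on the good event is at most $R^{d-1}$. Hence the product is bounded below by a constant $\gamma = \gamma(d,R)>0$. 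The main obstacle I expect is the careful bookkeeping in routing $\rho^\inn$ so that it simultaneously lies in $\mathcal T^\inn_{x_k'}$ (avoids the frame, respects the $R^{d-1}$ bound on boundary visits) and avoids $\widetilde x_k$, while keeping each path short enough and self-avoiding enough that the denominator in \eqref{eq:EinnK:proba} does not spoil the uniform lower bound; this is a finite, $R$-dependent combinatorial construction, but it is where the bulk of the work lies.
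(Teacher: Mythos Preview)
Your argument for \eqref{eq:Pkbad:2} has a genuine gap. You route the excursions $\rho^\inn_{i,j}$ to avoid $\edges(x')$ and the single vertex $\widetilde x_k$, and then assert that on $\{X^\inn_{\cube_k}=\rho^\inn\}$ one can ``walk inside $\cube(x')$ through the vacant core'' from $\widetilde x_k$ to $\edges(x')$. But nothing guarantees such a vacant core: the ranges of the paths $\rho^\inn_{i,j}$ lie in the interior of $\cube(x')$ and are \emph{occupied}, and they may well separate $\widetilde x_k$ from $\edges(x')$ inside $\cube(x')\cap\mathcal V^u$. Making the frame and one boundary vertex vacant does not by itself produce a vacant connection between them; in general $\widetilde x_k$ is not even adjacent to $\edges(x')$. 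The paper's fix is to reserve, in advance, a deterministic \emph{tunnel} $T_k\subset\cube_k$: a short nearest-neighbor path from $\widetilde x_k$ into the interior and then straight toward the frame, chosen so that $T_k$ meets $\edges(x_k')$ and so that $\cube_k\setminus(\intb\cube_k\cup\edges(x_k')\cup T_k)$ remains connected and every vertex of $\intb\cube_k\setminus(\edges(x_k')\cup\{\widetilde x_k\})$ has a neighbor in that set. One then routes each $\rho^\inn_{i,j}$ as a self-avoiding path that avoids $T_k$ as well as $\edges(x_k')$, touching $\intb\cube_k$ only at its endpoints. On $\{X^\inn_{\cube_k}=\rho^\inn\}$ this forces $T_k\subset\mathcal V^u$, which yields the missing vacant connection from $\widetilde x_k$ to $\edges(x_k')$ and hence \eqref{eq:Pkbad:2}.

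There is also a subtler imprecision in your reduction for \eqref{eq:Pkbad:1}. You claim that on $\widetilde{\mathcal A}_{z,k}\cap\{x_k'=x'\}$ the pair $(\Sigmagood,\Sigmaall_k)$ is $\sigma(\mathcal F^{AB}_{\cube(x'),u},\mathcal F^\outt_{\cube(x'),u})$-measurable, and hence that the conditional law of $X^\inn_{\cube_k}$ given $\sigma(\Sigmagood,\Sigmaall_k,\mathcal F^\outt_{\cube(x'),u})$ coincides with its law given $\mathcal F^{AB}_{\cube(x'),u}$. This is not quite right: the coordinate of $\Sigmagood$ at $x'$ itself, namely $\{x'\text{ is good for }\local^u\}$, is $\mathcal F^\inn_{\cube(x'),u}$-measurable, not $\mathcal F^\outt_{\cube(x'),u}$-measurable. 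The correct statement (this is Lemma~\ref{l:condindep:good} in the paper) is that the conditional law equals the $\mathcal F^{AB}_{\cube(x'),u}$-conditional law \emph{further conditioned} on $\{x'\text{ is good}\}$. Since $\rho^\inn\in\mathcal T^\inn_{x'}$ implies $\{X^\inn_{\cube(x')}=\rho^\inn\}\subseteq\{x'\text{ is good}\}$, this extra conditioning only helps the lower bound, so your estimate survives; but the identification of laws as stated is false and should be replaced by this inequality. The rest of your quantitative argument (self-avoiding paths of length at most $|\cube_k|$, at most $R^{d-1}$ excursions, product lower bound via \eqref{eq:EinnK:proba}) matches the paper.
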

Before we prove Lemma~\ref{l:Pkbad}, we use it to finish the proof of Lemma~\ref{l:eventA:estimate:k}. 
We have 
\begin{eqnarray*}\label{eq:Pkrhok}
\mathds{1}_{\mathcal H_N} \cdot \mathbb P\left[\mathcal A_{z,k}~|~\Sigmagood, \Sigmaall_k \right]
&\stackrel{\eqref{eq:tildeAk:meas},\eqref{eq:AktildeAk}}=
&\mathds{1}_{\widetilde {\mathcal A}_{z,k}} \cdot \mathbb P\left[\mathcal A_{z,k}~|~\Sigmagood, \Sigmaall_k \right]\\
&\stackrel{\eqref{eq:Pkbad:2}}\leq
&\mathds{1}_{\widetilde {\mathcal A}_{z,k}} \cdot 
\mathbb P\left[X^\inn_{\cube_k}\neq \rho^\inn~|~\Sigmagood, \Sigmaall_k \right] \\
&\stackrel{\eqref{eq:Pkbad:1}}\leq 
&\mathds{1}_{\widetilde {\mathcal A}_{z,k}} \cdot (1 - \gamma) .\
\end{eqnarray*}
This finishes the proof of Lemma~\ref{l:eventA:estimate:k}, subject to Lemma~\ref{l:Pkbad}. 

\bigskip

It remains to prove Lemma~\ref{l:Pkbad}. We begin with some preliminary results. 
Recall the notion of the $\sigma$-algebras 
$\mathcal F^\inn_{K,u}$, $\mathcal F^\outt_{K,u}$, and $\mathcal F^{AB}_{K,u}$ from
Section~\ref{sec:condindep:3}.
\begin{lemma}\label{l:condindep:good}
For any $x'\in\ZZ$ and $\mathcal E^\inn\in \mathcal F^\inn_{\cube(x'),u}$, we have, $\mathbb P$-almost surely, that
\begin{multline}\label{eq:condindep:good}
\mathds{1}_{\widetilde {\mathcal A}_{z,k}\cap\{x_k' = x'\}}\cdot
\mathbb P\left[\mathcal E^\inn~|~\sigma(\Sigmagood, \Sigmaall_k, \mathcal F^\outt_{\cube(x'),u})\right]\\
=
\mathds{1}_{\widetilde {\mathcal A}_{z,k}\cap\{x_k' = x'\}}\cdot
\frac
{\mathbb P\left[\mathcal E^\inn\cap\{\text{$x'$ is good for $\local^u$}\}~|~\mathcal F^{AB}_{\cube(x'),u}\right]}
{\mathbb P\left[\{\text{$x'$ is good for $\local^u$}\}~|~\mathcal F^{AB}_{\cube(x'),u}\right]} .\
\end{multline}
\end{lemma}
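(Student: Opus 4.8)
The plan is to deduce Lemma~\ref{l:condindep:good} from the conditional independence statement of Lemma~\ref{l:condindep}(a) together with the measurability facts collected in Claim~\ref{claim:sigmaF} and in Section~\ref{sec:ubiq}. The key point is to identify which of the three $\sigma$-algebras $\mathcal F^\inn_{\cube(x'),u}$, $\mathcal F^\outt_{\cube(x'),u}$, $\mathcal F^{AB}_{\cube(x'),u}$ each of the conditioning random variables $\Sigmagood$, $\Sigmaall_k$ lives in, once we are on the event $\widetilde{\mathcal A}_{z,k}\cap\{x_k'=x'\}$. Write $\cube=\cube(x')$ and abbreviate $\mathcal F^\inn=\mathcal F^\inn_{\cube,u}$, $\mathcal F^\outt=\mathcal F^\outt_{\cube,u}$, $\mathcal F^{AB}=\mathcal F^{AB}_{\cube,u}$ for brevity.

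First I would split $\Sigmagood$ into its coordinate indexed by $x'$ and the rest. The coordinates $\mathds 1_{\{y'\text{ is good for }\local^u\}}$ for $y'\in\ballZZ(0,2N)\setminus\{x'\}$ are determined by $(\local^u_x~:~x\notin\cube)$ (because the good/bad status of $y'\ne x'$ depends only on $\local^u$ inside $\cube(y')\cup\dcube(y')\subseteq\cube^c$, using $R\ge 1$ so distinct boxes are at $L_1$-distance $\ge 2R+1$... here one only needs $\cube(y')$ disjoint from $\cube$, which holds), hence these coordinates are $\mathcal F^\outt$-measurable by Claim~\ref{claim:sigmaF}(3). The coordinate at $x'$, namely $\mathds 1_{\{x'\text{ is good for }\local^u\}}$, is, via \eqref{eq:Tinn:good}, the indicator $\mathds 1_{\{X^\inn_{\cube}\in\mathcal T^\inn_{x'}\}}$; by \eqref{eq:XinnKXABK:meas} and \eqref{eq:Tinnx':meas} this event lies in $\mathcal F^\inn$ and its defining set $\mathcal T^\inn_{x'}$ is $\mathcal F^{AB}$-measurable. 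Next, $\Sigmaall_k=(\mathds 1_{\{x\in\mathcal I^u\cap\mathcal D_k\}})$: by \eqref{eq:Dk:location} we have $\mathcal D_k\subseteq\ballZ(0,(2R+1)K_{N,k}-R)$ and moreover $\mathcal D_k$ is a connected component of $\Z^d\setminus\cup_{x'\in\mathcal S_k}\ballZ(x',R)$ not containing $\cube$ in its interior — in fact $\cube\subseteq\Z^d\setminus\mathcal D_k$ up to its boundary on $\dcube$, which sits on the shell; the upshot is that $\mathds 1_{\{x\in\mathcal I^u\cap\mathcal D_k\}}=\mathds 1_{\{x\in\mathcal D_k\}}\mathds 1_{\{\local^u_x\ge 1\}}$ and, on $\widetilde{\mathcal A}_{z,k}\cap\{x_k'=x'\}$, every $x\in\mathcal D_k$ lies in $\cube^c$ while $\mathcal D_k$ itself is $\sigmagood$-measurable by \eqref{eq:Dksigmagood}. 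Hence, on this event, $\Sigmaall_k$ is a function of $\Sigmagood$ and $(\local^u_x~:~x\in\cube^c)$, so it adds nothing to the $\sigma$-algebra $\sigma(\Sigmagood,\mathcal F^\outt)$ beyond what $\Sigmagood$ and $\mathcal F^\outt$ already carry.

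With these identifications, on the event $\widetilde{\mathcal A}_{z,k}\cap\{x_k'=x'\}$ we have $\sigma(\Sigmagood,\Sigmaall_k,\mathcal F^\outt)=\sigma(G,\mathcal F^\outt)$ where $G:=\{x'\text{ is good for }\local^u\}\in\mathcal F^\inn$, with defining parameter set $\mathcal T^\inn_{x'}\in\mathcal F^{AB}$. Now fix $\mathcal E^\inn\in\mathcal F^\inn$. Since $\mathcal F^{AB}\subseteq\mathcal F^\outt$ by Claim~\ref{claim:sigmaF}(1), conditioning on $\sigma(G,\mathcal F^\outt)$ and using that $\mathcal E^\inn\cap G$ and the rest are, given $\mathcal F^{AB}$, split between $\mathcal F^\inn$ and $\mathcal F^\outt$, Lemma~\ref{l:condindep}(a) yields
\begin{equation*}
\mathbb P[\mathcal E^\inn\cap G~|~\mathcal F^\outt]=\mathbb P[\mathcal E^\inn\cap G~|~\mathcal F^{AB}],\qquad
\mathbb P[G~|~\mathcal F^\outt]=\mathbb P[G~|~\mathcal F^{AB}],
\end{equation*}
both $\mathbb P$-a.s., because $\mathcal E^\inn\cap G\in\mathcal F^\inn$ and $G\in\mathcal F^\inn$. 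Then the elementary identity $\mathbb P[\mathcal E^\inn~|~\sigma(G,\mathcal F^\outt)]\,\mathds 1_G=\mathds 1_G\,\mathbb P[\mathcal E^\inn\cap G~|~\mathcal F^\outt]/\mathbb P[G~|~\mathcal F^\outt]$ (valid a.s.\ on $\{\mathbb P[G~|~\mathcal F^\outt]>0\}$, which contains $G$ up to a null set) combined with $\mathds 1_{\widetilde{\mathcal A}_{z,k}\cap\{x_k'=x'\}}\le\mathds 1_G$ gives exactly \eqref{eq:condindep:good}. The last small point is to check $\mathbb P[G~|~\mathcal F^{AB}]>0$ a.s.\ on $G$; that is automatic since $\mathbb E[\mathds 1_G\,(1-\mathds 1_{\{\mathbb P[G|\mathcal F^{AB}]>0\}})]=0$.

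The main obstacle I expect is the bookkeeping in the second paragraph: verifying carefully that, on the event $\widetilde{\mathcal A}_{z,k}\cap\{x_k'=x'\}$, every coordinate of $\Sigmagood$ other than the one at $x'$, and every coordinate of $\Sigmaall_k$, is measurable with respect to $\sigma(\Sigmagood\text{-at-}x'^c,\mathcal F^\outt)$ and does not secretly encode information about the excursions \emph{inside} $\cube(x')$. This rests on the geometric facts \eqref{eq:Dk:location}, \eqref{eq:Sksigmagood}, \eqref{eq:Dksigmagood} and on the disjointness of the boxes $\cube(y')$, together with Claim~\ref{claim:sigmaF}(3); once these are in place the conditional-independence algebra is routine. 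Note also that \eqref{eq:Tinn:good}, \eqref{eq:Tinnx':meas}, and \eqref{eq:XinnKXABK:meas} are precisely what makes the single special coordinate $\mathds 1_G$ an $\mathcal F^\inn$-event with $\mathcal F^{AB}$-measurable defining data, which is the structural feature that makes Lemma~\ref{l:condindep}(a) directly applicable.
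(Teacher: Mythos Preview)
Your proposal is correct and follows essentially the same strategy as the paper's proof: split $\Sigmagood$ into the single coordinate $\mathds 1_{\{x'\text{ is good}\}}=:\mathds 1_G\in\mathcal F^\inn_{\cube(x'),u}$ and the remaining coordinates (which are $\mathcal F^\outt_{\cube(x'),u}$-measurable since the boxes $\cube(y')$ are disjoint), observe that on $\widetilde{\mathcal A}_{z,k}\cap\{x_k'=x'\}$ the set $\mathcal D_k$ is disjoint from $\cube(x')$ so $\Sigmaall_k$ contributes nothing beyond $\mathcal F^\outt_{\cube(x'),u}$ and $\Sigmagood$, and then apply Lemma~\ref{l:condindep}(a). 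The paper carries this out by fixing concrete values $\sigmagoodel,\sigmaallel_k,\partial_k$ and verifying the defining integral identity \eqref{eq:condindep:good:1} directly, whereas you phrase it via trace $\sigma$-algebras and the ratio formula for $\mathbb P[\,\cdot\mid\sigma(G,\mathcal F^\outt)]\mathds 1_G$; the substance is the same.

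One point you leave implicit deserves a sentence of justification: the passage from $\mathbb P[\mathcal E^\inn\mid\sigma(\Sigmagood,\Sigmaall_k,\mathcal F^\outt)]$ to $\mathbb P[\mathcal E^\inn\mid\sigma(G,\mathcal F^\outt)]$ on the event $A:=\widetilde{\mathcal A}_{z,k}\cap\{x_k'=x'\}$ requires that $A$ lies in \emph{both} $\sigma$-algebras and that their traces on $A$ coincide. You establish the ingredients for this (via $A\subseteq\{x'\in\mathcal S_k\}$, $\Sigmagood$ being $\sigma(G,\mathcal F^\outt)$-measurable, and $\mathds 1_{\{x'\in\mathcal S_k\}}\Sigmaall_k$ being $\sigma(G,\mathcal F^\outt)$-measurable), but the conclusion should be stated explicitly rather than folded into the phrase ``on the event we have $\sigma(\ldots)=\sigma(\ldots)$''. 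The paper sidesteps this by working atom-by-atom on $\{\Sigmagood=\sigmagoodel,\Sigmaall_k=\sigmaallel_k\}$, which makes the equality of traces automatic.
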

\begin{remark}
Note that $\widetilde {\mathcal A}_{z,k}\cap\{x_k' = x'\} \subseteq \{\text{$x'$ is good for $\local^u$}\}$. Therefore, 
\[
\mathbb P\left[
\widetilde {\mathcal A}_{z,k}\cap\{x_k' = x'\} \cap
\left\{\omega~:~\mathbb P\left[\{\text{$x'$ is good for $\local^u$}\}~|~\mathcal F^{AB}_{\cube(x'),u}\right] = 0\right\} 
\right] = 0 .\
\]
\end{remark}
\begin{proof}[Proof of Lemma~\ref{l:condindep:good}]
Let $z\in \ballZ(0,(2R+1)\cdot N)$. 
Let 
\begin{multline*}
\sigmagoodel\in\{0,1\}^{\ballZZ(0,2N)}, \quad \sigmaallel_k\in\{0,1\}^{\ballZ(0,(2R+1)\cdot K_{N,k} -R)},\\
\partial_k\subseteq \ballZ(0,(2R+1)\cdot K_{N,k} -R), \quad \mbox{and}\quad x'\in\ballZZ(0,2N)
\end{multline*}
be such that 
\begin{equation}\label{eq:Sigmas:choice}
\{\Sigmagood = \sigmagoodel,~ \Sigmaall_k = \sigmaallel_k\}\subseteq 
\widetilde {\mathcal A}_{z,k}\cap\{x_k' = x'\}\cap\{\mathcal D_k = \partial_k\} ,\
\end{equation}
where $\mathcal D_k$ is defined in \eqref{def:Dk}. 
Let 
\[
K = \cube(x') .\
\]
In order to prove \eqref{eq:condindep:good}, 
it suffices to show that for any events $\mathcal E^\inn\in\mathcal F^\inn_{K,u}$ and $\mathcal E^\outt\in\mathcal F^\outt_{K,u}$, 
we have 
\begin{multline}\label{eq:condindep:good:1}
\mathbb P\left[
\mathcal E^\inn\cap\mathcal E^\outt\cap\{\Sigmagood = \sigmagoodel,~ \Sigmaall_k = \sigmaallel_k\}
\right]\\
=
\mathbb E \left[
\frac
{\mathbb P\left[\mathcal E^\inn\cap\{\text{$x'$ is good for $\local^u$}\}~|~\mathcal F^{AB}_{K,u}\right]}
{\mathbb P\left[\{\text{$x'$ is good for $\local^u$}\}~|~\mathcal F^{AB}_{K,u}\right]} ;~
\mathcal E^\outt\cap\{ \Sigmagood = \sigmagoodel,~ \Sigmaall_k = \sigmaallel_k\}
\right] .\
\end{multline}
Let 
\[
\Sigmagoodout = 
\left(\mathds{1}_{\{\widetilde x'\mathrm{~is~good~for~}\mathcal L^u\}}~:~
\widetilde x'\in \ballZZ(0,2N)\setminus\{x'\}\right)
\]
be the restriction of $\Sigmagood$ to $\ballZZ(0,2N)\setminus\{x'\}$, and 
let $\sigmagoodelout\in \{0,1\}^{\ballZZ(0,2N)\setminus\{x'\}}$ be 
the restriction of $\sigmagoodel$ to $\ballZZ(0,2N)\setminus\{x'\}$. 
Consider the events 
\begin{align*}
\widetilde {\mathcal E}^\inn &= \{x'\mathrm{~is~good~for~}\mathcal L^u\} ,\\
\widetilde {\mathcal E}^\outt &= 
\left\{\Sigmagoodout = \sigmagoodelout,~ 
\left(\mathds{1}_{\{x\in\mathcal I^u\cap\partial_k\}}~:~x\in \ballZ(0,(2R+1)\cdot K_{N,k} -R)\right) = \sigmaallel_k
\right\} .\
\end{align*}
Note that by Claim~\ref{claim:sigmaF} (3) and Definition~\ref{def:good}, we have 
\begin{equation}\label{eq:Einn:meas}
\widetilde {\mathcal E}^\inn \in \mathcal F^\inn_{K,u} ,\
\end{equation}
by Claim~\ref{claim:sigmaF} (3) and the fact that $\partial_k\cap K = \emptyset$, 
\begin{equation}\label{eq:Eoutt:meas}
\widetilde {\mathcal E}^\outt \in\mathcal F^\outt_{K,u} ,\
\end{equation}
and by \eqref{eq:Sigmas:choice}, 
\begin{equation}\label{eq:Sigmas:EinnEoutt}
\{\Sigmagood = \sigmagoodel,~ \Sigmaall_k = \sigmaallel_k\} 
= \widetilde {\mathcal E}^\inn\cap\widetilde {\mathcal E}^\outt .\
\end{equation}
Using these observations and Lemma~\ref{l:condindep} (a), we rewrite the left-hand side of \eqref{eq:condindep:good:1} as 
\begin{multline*}
\mathbb P\left[
\mathcal E^\inn\cap\mathcal E^\outt\cap\{\Sigmagood = \sigmagoodel,~ \Sigmaall_k = \sigmaallel_k\}
\right]
\stackrel{\eqref{eq:Sigmas:EinnEoutt}}=
\mathbb P\left[
(\mathcal E^\inn\cap\widetilde {\mathcal E}^\inn)\cap
(\mathcal E^\outt\cap\widetilde {\mathcal E}^\outt)
\right]\\
\stackrel{\mathrm{Lemma}~\ref{l:condindep}\mathrm{(a)},\eqref{eq:Einn:meas}, \eqref{eq:Eoutt:meas}}=
\mathbb E \left[
\mathbb P\left[
\mathcal E^\inn\cap\widetilde {\mathcal E}^\inn~|~\mathcal F^{AB}_{K,u}
\right]
\cdot
\mathbb P\left[
\mathcal E^\outt\cap\widetilde {\mathcal E}^\outt~|~\mathcal F^{AB}_{K,u}
\right]
\right]\\
=
\mathbb E \left[
\frac{\mathbb P\left[
\mathcal E^\inn\cap\widetilde {\mathcal E}^\inn~|~\mathcal F^{AB}_{K,u}
\right]}
{\mathbb P\left[
\widetilde {\mathcal E}^\inn~|~\mathcal F^{AB}_{K,u}
\right]}
\cdot
\mathbb P\left[
\widetilde {\mathcal E}^\inn~|~\mathcal F^{AB}_{K,u}
\right]
\cdot
\mathbb P\left[
\mathcal E^\outt\cap\widetilde {\mathcal E}^\outt~|~\mathcal F^{AB}_{K,u}
\right]
\right]\\
\stackrel{\mathrm{Lemma}~\ref{l:condindep}\mathrm{(a)},\eqref{eq:Einn:meas}, \eqref{eq:Eoutt:meas}}=
\mathbb E \left[
\frac{\mathbb P\left[
\mathcal E^\inn\cap\widetilde {\mathcal E}^\inn~|~\mathcal F^{AB}_{K,u}
\right]}
{\mathbb P\left[
\widetilde {\mathcal E}^\inn~|~\mathcal F^{AB}_{K,u}
\right]}
\cdot
\mathbb P\left[
\widetilde {\mathcal E}^\inn\cap \mathcal E^\outt\cap\widetilde {\mathcal E}^\outt~|~\mathcal F^{AB}_{K,u}
\right]
\right]\\
\stackrel{\eqref{eq:Sigmas:EinnEoutt}}=
\mathbb E \left[
\frac{\mathbb P\left[
\mathcal E^\inn\cap\widetilde {\mathcal E}^\inn~|~\mathcal F^{AB}_{K,u}
\right]}
{\mathbb P\left[
\widetilde {\mathcal E}^\inn~|~\mathcal F^{AB}_{K,u}
\right]}
\cdot
\mathbb P\left[
\mathcal E^\outt\cap\{\Sigmagood = \sigmagoodel,~ \Sigmaall_k = \sigmaallel_k\}~|~\mathcal F^{AB}_{K,u}
\right]
\right]\\
=
\mathbb E\left[
\frac{\mathbb P\left[
\mathcal E^\inn\cap\widetilde {\mathcal E}^\inn~|~\mathcal F^{AB}_{K,u}
\right]}
{\mathbb P\left[
\widetilde {\mathcal E}^\inn~|~\mathcal F^{AB}_{K,u}
\right]};~
\mathcal E^\outt\cap\{\Sigmagood = \sigmagoodel,~ \Sigmaall_k = \sigmaallel_k\}
\right] .\
\end{multline*}
This is precisely \eqref{eq:condindep:good:1}. 
The proof of Lemma~\ref{l:condindep:good} is complete. 
\end{proof}

\medskip

\begin{lemma}\label{l:Pk}
For $z\in\ballZ(0,(2R+1)\cdot N)$, $x'\in\ZZ$, non-negative integers $n$ and $(m_i~:~1\leq i\leq n)$, 
vector $\tau^\inn = (\tau^\inn_{i,j}~:~1\leq i\leq n,~1\leq j\leq m_i)$ of 
finite nearest-neighbor trajectories $\tau^\inn_{i,j}$ in $\cube(x')$ 
from $x_{i,j}\in\intb \cube(x')$ to $y_{i,j}\in\intb \cube(x')$,  
and $1\leq k\leq k_N$, we have
$\mathbb P$-almost surely, that 
\begin{multline}\label{eq:Pk:lowerbound}
\mathds{1}_{\widetilde {\mathcal A}_{z,k}\cap\{x_k' = x'\}}\cdot
\mathbb P\left[X^\inn_{\cube_k} = \tau^\inn~|~\sigma(\Sigmagood, \Sigmaall_k, \mathcal F^\outt_{\cube(x'),u})\right]\\
\geq 
\mathds{1}_{\widetilde {\mathcal A}_{z,k}\cap\{x_k' = x'\}\cap\{\tau^\inn \in \mathcal T^\inn_{x'}\}}\cdot
\prod_{i=1}^{n}\prod_{j=1}^{m_i}
\left(1/2d\right)^{|\tau^\inn_{i,j}|} .\
\end{multline}
\end{lemma}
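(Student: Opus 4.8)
The plan is to combine the conditional independence statement of Lemma~\ref{l:condindep:good} with the explicit formula \eqref{eq:EinnK:proba} for the conditional law of the interlacement excursions inside a box, and then bound each factor from below by the crudest possible estimate, namely the probability that a simple random walk follows a prescribed trajectory step by step.

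\medskip

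\noindent\textbf{Step 1: reduce to a bound on the inner conditional law.}
Apply Lemma~\ref{l:condindep:good} with the event $\mathcal E^\inn = \{X^\inn_{\cube_k} = \tau^\inn\}\in\mathcal F^\inn_{\cube(x'),u}$ (this is $\mathcal F^\inn_{\cube(x'),u}$-measurable by \eqref{eq:XinnKXABK:meas}). On the event $\widetilde {\mathcal A}_{z,k}\cap\{x_k'=x'\}$, which is contained in $\{x'\text{ is good for }\local^u\}$, the right-hand side of \eqref{eq:condindep:good} is
\[
\frac{\mathbb P[\{X^\inn_{\cube(x')} = \tau^\inn\}\cap\{x'\text{ is good}\}\mid \mathcal F^{AB}_{\cube(x'),u}]}{\mathbb P[\{x'\text{ is good}\}\mid \mathcal F^{AB}_{\cube(x'),u}]}.
\]
Since by \eqref{eq:Tinn:good} the event $\{x'\text{ is good}\}$ equals $\{X^\inn_{\cube(x')}\in\mathcal T^\inn_{x'}\}$, the numerator equals $\mathbb P[X^\inn_{\cube(x')} = \tau^\inn\mid\mathcal F^{AB}_{\cube(x'),u}]$ whenever $\tau^\inn\in\mathcal T^\inn_{x'}$ (and zero otherwise), while the denominator is at most $1$. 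Hence it suffices to show, $\mathbb P$-almost surely on $\widetilde {\mathcal A}_{z,k}\cap\{x_k'=x'\}\cap\{\tau^\inn\in\mathcal T^\inn_{x'}\}$, that
\[
\mathbb P[X^\inn_{\cube(x')} = \tau^\inn\mid\mathcal F^{AB}_{\cube(x'),u}] \geq \prod_{i=1}^n\prod_{j=1}^{m_i}(1/2d)^{|\tau^\inn_{i,j}|}.
\]

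\medskip

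\noindent\textbf{Step 2: apply the explicit formula \eqref{eq:EinnK:proba}.}
On $\widetilde {\mathcal A}_{z,k}\cap\{x_k'=x'\}$, the realization of $\mathcal F^{AB}_{\cube(x'),u}$ is of the form described in Definition~\ref{def:eventsE}, i.e.\ $N_{\cube(x'),u}=n$, $M_i=m_i$, and the entrance/exit points $x_{i,j},y_{i,j}$ are the endpoints of the trajectories $\tau^\inn_{i,j}$. The conditional probability $\mathbb P[\mathcal E^\inn_{\cube(x'),u}\mid\mathcal E^{AB}_{\cube(x'),u}]$ is then given by \eqref{eq:EinnK:proba} as the product over $i,j$ of
\[
\frac{P_{x_{i,j}}[(X(t):0\leq t\leq T_K-1) = \tau^\inn_{i,j}]}{P_{x_{i,j}}[X(T_K-1) = y_{i,j}]},
\]
with $K = \cube(x')$. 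The denominator is a probability, hence $\leq 1$, so each factor is at least $P_{x_{i,j}}[(X(t):0\leq t\leq T_K-1)=\tau^\inn_{i,j}]$. Finally, since $\tau^\inn_{i,j}$ is a nearest-neighbor trajectory inside $K$ of length $|\tau^\inn_{i,j}|$ (i.e.\ with $|\tau^\inn_{i,j}|-1$ steps, all inside $K$ so that $T_K > |\tau^\inn_{i,j}|-1$), the event $\{(X(t):0\leq t\leq T_K-1)=\tau^\inn_{i,j}\}$ contains the event that the walk makes exactly the $|\tau^\inn_{i,j}|-1$ prescribed steps, which has probability $(1/2d)^{|\tau^\inn_{i,j}|-1}\geq (1/2d)^{|\tau^\inn_{i,j}|}$ under $P_{x_{i,j}}$. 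Multiplying over all $i,j$ gives the claimed bound. (One must also handle the conditioning on $\Sigmagood,\Sigmaall_k$ in passing from Lemma~\ref{l:condindep:good} to the displayed inequality, and the degenerate event where the denominator in Lemma~\ref{l:condindep:good} vanishes, which has probability zero on $\widetilde{\mathcal A}_{z,k}\cap\{x_k'=x'\}$ by the Remark following Lemma~\ref{l:condindep:good}.)

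\medskip

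\noindent\textbf{Main obstacle.}
The only genuinely delicate point is the bookkeeping in Step~1: one must carefully verify that Lemma~\ref{l:condindep:good} can indeed be applied with the $\mathcal F^\inn$-event $\{X^\inn_{\cube_k}=\tau^\inn\}$ and, more importantly, that on the event $\widetilde{\mathcal A}_{z,k}\cap\{x_k'=x'\}$ (which fixes $\cube_k = \cube(x')$), the indicator $\mathds{1}_{\{\tau^\inn\in\mathcal T^\inn_{x'}\}}$ matches the requirement that the numerator in Lemma~\ref{l:condindep:good} be nonzero, and that the combinatorial factors ($n$, the $m_i$, the endpoints) appearing in \eqref{eq:EinnK:proba} are exactly those determined by $\tau^\inn$ on this event. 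The actual random-walk estimate in Step~2 is entirely routine given \eqref{eq:EinnK:proba}.
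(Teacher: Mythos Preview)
Your approach is essentially identical to the paper's: apply Lemma~\ref{l:condindep:good}, use \eqref{eq:Tinn:good} to drop the goodness constraint in the numerator when $\tau^\inn\in\mathcal T^\inn_{x'}$, bound the denominator by $1$, invoke \eqref{eq:EinnK:proba}, and bound each factor crudely. This is exactly what the paper does.

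There is, however, a small slip in your Step~2. You write that the event $\{(X(t):0\le t\le T_K-1)=\tau^\inn_{i,j}\}$ \emph{contains} the event that the walk makes the $|\tau^\inn_{i,j}|-1$ prescribed steps. The containment goes the other way: if the walk follows $\tau^\inn_{i,j}$ for its first $|\tau^\inn_{i,j}|-1$ steps but then remains in $K$ at the next step, then $T_K>|\tau^\inn_{i,j}|$ and $(X(t):0\le t\le T_K-1)$ is strictly longer than $\tau^\inn_{i,j}$, hence unequal to it. The correct identity (which the paper uses) is
\[
\{(X(t):0\le t\le T_K-1)=\tau^\inn_{i,j}\}
=\{X(t)=\tau^\inn_{i,j}(t),\ 0\le t\le |\tau^\inn_{i,j}|-1\}\cap\{X(|\tau^\inn_{i,j}|)\notin K\},
\]
and since $y_{i,j}\in\intb K$ has at least one neighbor outside $K$, the right-hand side has probability at least $(1/2d)^{|\tau^\inn_{i,j}|}$. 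So your intermediate claim $P_{x_{i,j}}[\,\cdot\,]\ge(1/2d)^{|\tau^\inn_{i,j}|-1}$ is unjustified, but the final bound $(1/2d)^{|\tau^\inn_{i,j}|}$ you actually use is correct once you account for the extra exit step. Fix this one sentence and the proof is complete.
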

\begin{remark}
Note that by \eqref{eq:PkTk}, we have 
\[
\widetilde {\mathcal A}_{z,k}\cap\{x_k' = x'\}\cap \{X^\inn_{\cube_k} = \tau^\inn\}\subseteq\{\tau^\inn \in \mathcal T^\inn_{x'}\} ,\ 
\]
and by \eqref{eq:Tinnx':meas} and Claim~\ref{claim:sigmaF}(1), 
\[
\{\tau^\inn \in \mathcal T^\inn_{x'}\}\in \mathcal F^{AB}_{\cube(x'),u}\subset \mathcal F^\outt_{\cube(x'),u} .\  
\]
In particular, the right-hand side of \eqref{eq:Pk:lowerbound} is measurable with respect to 
$\sigma(\Sigmagood,\Sigmaall_k,\mathcal F^{AB}_{\cube(x'),u})$. 
\end{remark}

\begin{proof}[Proof of Lemma~\ref{l:Pk}]
By \eqref{eq:XinnKXABK:meas}, 
$\{X^\inn_{\cube(x')} = \tau^\inn\}\in\mathcal F^\inn_{\cube(x'),u}$. Using Lemma~\ref{l:condindep:good}, 
we obtain
\begin{multline}\label{eq:Xinncubexk}
\mathds{1}_{\widetilde {\mathcal A}_{z,k}\cap\{x_k' = x'\}}\cdot
\mathbb P\left[X^\inn_{\cube_k} = \tau^\inn~|~\sigma(\Sigmagood, \Sigmaall_k, \mathcal F^\outt_{\cube(x'),u})\right]\\
=
\mathds{1}_{\widetilde {\mathcal A}_{z,k}\cap\{x_k' = x'\}}\cdot
\mathbb P\left[X^\inn_{\cube(x')} = \tau^\inn~|~\sigma(\Sigmagood, \Sigmaall_k, \mathcal F^\outt_{\cube(x'),u})\right]\\
\stackrel{\eqref{eq:condindep:good}}=
\mathds{1}_{\widetilde {\mathcal A}_{z,k}\cap\{x_k' = x'\}}\cdot
\frac
{\mathbb P\left[X^\inn_{\cube(x')} = \tau^\inn,~ \text{$x'$ is good for $\local^u$} ~|~\mathcal F^{AB}_{\cube(x'),u}\right]}
{\mathbb P\left[\text{$x'$ is good for $\local^u$}~|~\mathcal F^{AB}_{\cube(x'),u}\right]} \\
\stackrel{\eqref{eq:Tinn:good}}\geq
\mathds{1}_{\widetilde {\mathcal A}_{z,k}\cap\{x_k' = x'\}\cap\{\tau^\inn \in \mathcal T^\inn_{x'}\}}\cdot
\mathbb P\left[X^\inn_{\cube(x')} = \tau^\inn~|~\mathcal F^{AB}_{\cube(x'),u}\right] .\
\end{multline}
Using \eqref{eq:EinnK:proba}, we get 
\begin{multline*}
\mathds{1}_{\{\tau^\inn \in \mathcal T^\inn_{x'}\}}\cdot
\mathbb P\left[X^\inn_{\cube(x')} = \tau^\inn~|~\mathcal F^{AB}_{\cube(x'),u}\right] \\
=
\mathds{1}_{\{\tau^\inn \in \mathcal T^\inn_{x'}\}}\cdot
\prod_{i=1}^{n}\prod_{j=1}^{m_i}
\frac{P_{x_{i,j}}[(X(t)~:~0\leq t\leq T_{\cube(x')}-1)= \tau^\inn_{i,j}]}
{P_{x_{i,j}}[X(T_{\cube(x')}-1)= y_{i,j}]} \\
\geq
\mathds{1}_{\{\tau^\inn \in \mathcal T^\inn_{x'}\}}\cdot
\prod_{i=1}^{n}\prod_{j=1}^{m_i}
P_{x_{i,j}}[(X(t)~:~0\leq t\leq T_{\cube(x')}-1)= \tau^\inn_{i,j}] \\
=
\mathds{1}_{\{\tau^\inn \in \mathcal T^\inn_{x'}\}}\cdot
\prod_{i=1}^{n}\prod_{j=1}^{m_i}
P_{x_{i,j}}[(X(t)~:~0\leq t\leq |\tau^\inn_{i,j}|-1)= \tau^\inn_{i,j},~X(|\tau^\inn_{i,j}|)\notin \cube(x')]\\
\geq 
\mathds{1}_{\{\tau^\inn \in \mathcal T^\inn_{x'}\}}\cdot
\prod_{i=1}^{n}\prod_{j=1}^{m_i}
\left(1/2d\right)^{|\tau^\inn_{i,j}|} .\
\end{multline*}
Together with \eqref{eq:Xinncubexk}, this implies \eqref{eq:Pk:lowerbound} and finishes the proof of Lemma~\ref{l:Pk}. 
\end{proof}

\bigskip

\begin{proof}[Proof of Lemma~\ref{l:Pkbad}]
Fix $z\in \ballZ(0,(2R+1)\cdot N)$, $1\leq k\leq k_N$, and 
a realization of $\Sigmagood$, $\Sigmaall_k$, and $X^{AB}_{\cube_k}$ satisfying $\widetilde {\mathcal A}_{z,k}$.
Our aim is to construct $\rho^\inn = \rho^\inn(\Sigmagood, \Sigmaall_k, X^{AB}_{\cube_k})$ satisfying 
\eqref{eq:Pkbad:0}, \eqref{eq:Pkbad:1}, and \eqref{eq:Pkbad:2}. 

\medskip

We begin by defining a ``tunnel'' from $\widetilde x_k$ to $\edges(x_k')$ inside $\cube_k$,
 which we will later force to be vacant. 
Recall that $\widetilde x_k\in \intb \cube_k\setminus\edges(x_k')$. 
By Definition~\ref{def:edges}, precisely one of the coordinates of the vector $\widetilde x_k - x_k'$ is $-R$ or $R$, and 
the values of all the remaining coordinates are between $-R+3$ and $R-3$. Let $i$ be this unique coordinate, 
and let $j$ be the first among the remaining $(d-1)$ coordinates which is not $i$. 
For $1\leq s\leq d$, let $e_s$ be the $s$th unit vector. 
We define the subset $T_k$ of $\cube_k$  
to be 
\[
\{\widetilde x_k, \widetilde x_k + e_i, \widetilde x_k + 2e_i\}\cup(\{\widetilde x_k + 2e_i + te_j~:~t\geq 0\}\cap\cube_k)
\]
if the value of the $i$th coordinate of $\widetilde x_k-x_k'$ is $-R$, 
or 
\[
\{\widetilde x_k, \widetilde x_k - e_i, \widetilde x_k - 2e_i\}\cup(\{\widetilde x_k - 2e_i + te_j~:~t\geq 0\}\cap\cube_k)
\]
if the value of the $i$th coordinate of $\widetilde x_k-x_k'$ is $R$. 
Note that for $R\geq 4$, 
\begin{enumerate}
\item[(1)]
$T_k\cap\edges(x_k') \neq \emptyset$,
\item[(2)]
$\cube_k\setminus (\dcube_k\cup\edges(x_k')\cup T_k)$ is a connected subset of $\cube_k$, and 
\item[(3)]
every $x\in\dcube_k\setminus(\edges(x_k')\cup \{\widetilde x_k\})$ has a neighbor in 
$\cube_k\setminus (\dcube_k\cup\edges(x_k')\cup T_k)$. 
\end{enumerate}
In particular, (2) and (3) imply that any two points $a,b\in \dcube_k\setminus(\edges(x_k')\cup \{\widetilde x_k\})$ 
are connected by a self-avoiding path in $\{a,b\}\cup(\cube_k\setminus (\dcube_k\cup\edges(x_k')\cup T_k))$.

\bigskip

Taking into account \eqref{eq:Ek:property}, the above mentioned properties of $T_k$ imply that 
for each element $(X_i(A_{i,j}),X_i(B_{i,j}))$ of $X^{AB}_{\cube_k}$, 
there exist self-avoiding paths 
$\rho^\inn_{i,j}$ which connect $X_i(A_{i,j})$ to $X_i(B_{i,j})$ and are entirely contained in 
$\cube_k\setminus (\dcube_k\cup\edges(x_k')\cup T_k)$ except for their start and end points, 
$X_i(A_{i,j})$ and $X_i(B_{i,j})$, which are in $\dcube_k\setminus(\edges(x_k')\cup\{\widetilde x_k\})$.
(Note that if $X_i(A_{i,j})=X_i(B_{i,j})$, then $\rho^\inn_{i,j} = \{X_i(A_{i,j})\}$ is the 
unique self-avoiding path from $X_i(A_{i,j})$ to $X_i(B_{i,j})$.)
We choose one of such collections of self-avoiding paths 
$\rho^\inn = \rho^\inn(\Sigmagood,\Sigmaall_k,X^{AB}_{\cube_k})$ in a predetermined, non-random way. 

\bigskip

We will now show that $\rho^\inn$ satisfies the requirements of Lemma~\ref{l:Pkbad}. 
First we show \eqref{eq:Pkbad:0}. 
Recall Definition~\ref{def:Tinnx'} of $\mathcal T^\inn_{x'}$. 
By construction, the total number of visits of all the $\rho^\inn_{i,j}$ to $\intb\cube_k$ is the smallest one among all 
the possible collections of paths 
$\tau^\inn = (\tau^\inn_{i,j}~:~1\leq i\leq N_{\cube_k,u},~1\leq j\leq M_i)$ 
inside $\cube_k$ from $X_i(A_{i,j})$ to $X_i(B_{i,j})$. 
In particular, it is almost surely smaller or equal to the total number of visits to $\intb\cube_k$ 
by the trajectories in $X^\inn_{\cube_k}$, 
which is at most $R^{d-1}$ by \eqref{eq:PkTk}. 
Thus, $\rho^\inn$ satisfies \eqref{eq:Pkbad:0}. 

\bigskip

Now we show that $\rho^\inn$ satisfies \eqref{eq:Pkbad:2}. 
If $X^\inn_{\cube_k} = \rho^\inn$, then $T_k \subset \mathcal V^u$. 
In particular, since $\widetilde x_k$ is connected to $\edges(x_k')$ by $T_k$, 
we obtain that $\widetilde x_k$ is connected to $\edges(x_k')$ in $\mathcal V^u\cap\cube_k$. 
Recall that $\widetilde x_k\sim x_k$ and, by Definition~\ref{def:tildeAk}, 
$x_k$ is connected to $z$ in $\mathcal V^u\cap\mathcal D_k$. 
Therefore, $z$ is connected to $\edges(x_k')\subset \mathcal C_k$ (recall \eqref{eq:Ck} and Definition~\ref{def:Ak}) 
in $\mathcal V^u\cap(\mathcal D_k\cup\cube_k)$, and 
the event $\mathcal A_{z,k}$ does not occur. 
In other words, $\rho^\inn$ satisfies \eqref{eq:Pkbad:2}. 

\bigskip

It remains to show that $\rho^\inn$ satisfies \eqref{eq:Pkbad:1}. 
Remember that the total number of visits of all the $\rho^\inn_{i,j}$ to $\intb\cube_k$ is at most $R^{d-1}$. 
In particular, the total number of trajectories $\rho^\inn_{i,j}$ in $\rho^\inn$ is at most $R^{d-1}$, namely 
\begin{equation}\label{eq:rhoinn:1}
\sum_{i=1}^{N_{\cube_k,u}} M_{i} \leq R^{d-1} .\
\end{equation}
Since each $\rho^\inn_{i,j}$ is a self-avoiding path in $\cube_k$, 
\begin{equation}\label{eq:rhoinn:2}
|\rho^\inn_{i,j}| \leq |\cube_k| \leq (2R+1)^d .\
\end{equation}
Finally, observe that for any $x'\in\ZZ$ and vector $\tau^\inn\in\mathcal T^\inn_{x'}$, 
\begin{equation}\label{eq:rhoinn:3}
\widetilde {\mathcal A}_{z,k}\cap\{x_k' = x'\}\cap\{\rho^\inn = \tau^\inn\} 
\in 
\sigma(\Sigmagood,\Sigma_k,\mathcal F^\outt_{\cube(x'),u}) .\
\end{equation}
We get 
\begin{multline*}
\mathds{1}_{\widetilde {\mathcal A}_{z,k}\cap\{x_k' = x'\}}\cdot
\mathbb P\left[X^\inn_{\cube_k} = \rho^\inn~|~\sigma(\Sigmagood,\Sigma_k,\mathcal F^\outt_{\cube(x'),u}) \right]\\
\stackrel{\eqref{eq:Tinnx':meas}, \eqref{eq:Pkbad:0}, \eqref{eq:rhoinn:3}}
=
\sum_{\tau^\inn\in\mathcal T^\inn_{x'}}
\mathds{1}_{\widetilde {\mathcal A}_{z,k}\cap\{x_k' = x'\}\cap\{\rho^\inn = \tau^\inn\}}\cdot
\mathbb P\left[X^\inn_{\cube_k} = \tau^\inn~|~\sigma(\Sigmagood,\Sigma_k,\mathcal F^\outt_{\cube(x'),u}) \right]\\
\stackrel{\eqref{eq:Pk:lowerbound}, \eqref{eq:Pkbad:0}, \eqref{eq:rhoinn:1}, \eqref{eq:rhoinn:2}}
\geq
\mathds{1}_{\widetilde {\mathcal A}_{z,k}\cap\{x_k' = x'\}}\cdot
(1/2d)^{R^{d-1}\cdot (2R+1)^{d}} .\
\end{multline*}
This proves that $\rho^\inn$ satisfies \eqref{eq:Pkbad:1} with $\gamma = (1/2d)^{R^{d-1}\cdot (2R+1)^{d}}$. 
The proof of Lemma~\ref{l:Pkbad} is complete. 
\end{proof}

\subsection{Proof of \eqref{eq:sts:inerlacement:2}}\label{sec:proof:sts:int:2}

In this section we complete the proof of Theorem~\ref{thm:sts:interlacement} by showing how to deduce 
\eqref{eq:sts:inerlacement:2} from \eqref{eq:HN:proba} and \eqref{eq:strongsupercriticality}. 
We begin with the following lemma. 

\begin{lemma}\label{l:2crossings}
For $R$ and $u_1$ as in \eqref{eq:Ru_1}, there exist constants $c=c(d)>0$ and $C=C(d)<\infty$ such that 
for all $u<u_1$ and $N\geq 1$, 
\begin{equation}\label{eq:2crossings}
\mathbb P\left[
\begin{array}{c}
\text{$\ballZ(0,2(2R+1)N)\cap\mathcal V^u$ contains two nearest-neighbor paths}\\
\text{from $\ballZ(0,(2R+1)N)$ to $\intb\ballZ(0,2(2R+1)N)$ which are}\\
\text{in different connected components of $\ballZ(0,2(2R+1)N)\cap\mathcal V^u$}
\end{array}
\right]
\leq C\cdot e^{-N^c} .\
\end{equation}
\end{lemma}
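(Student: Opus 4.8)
The plan is to derive \eqref{eq:2crossings} from the two main results already established --- the lower bound $\mathbb P[\mathcal H_N]\ge 1-Ce^{-N^c}$ of Claim~\ref{claim:HN:proba} and the estimate \eqref{eq:strongsupercriticality} of Lemma~\ref{l:eventA:estimate} --- by showing that, apart from the event $\mathcal H_N^c$, the presence of two disjoint macroscopic vacant crossings of the annulus $\ballZ(0,2(2R+1)N)\setminus\ballZ(0,(2R+1)N)$ forces one of the events $\mathcal A_{z,k_N}$ of Definition~\ref{def:Ak} to occur for some $z\in\ballZ(0,(2R+1)N)$, and then to finish by a union bound.

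First I would work on $\mathcal H_N$, on which the distinguished connected set $\mathcal C_{k_N}\subseteq\mathcal V^u$ of \eqref{eq:Ck} is defined, and observe that, being a connected component of $\mathcal V^u$ intersected with $\ballZ(0,(2R+1)K_{N,k_N}+R)$, it is connected and --- after the elementary check that $(2R+1)K_{N,k_N}+R\le 2(2R+1)N$, using $K_{N,k_N}=N+k_N^2\le 2N$ --- contained in $\ballZ(0,2(2R+1)N)$. Reconciling these radii, i.e. tracking the $O(\sqrt N)$ and $O(R)$ slack between $(2R+1)N$, $(2R+1)K_{N,k_N}$ and $2(2R+1)N$, is the only genuinely elementary bookkeeping step of the argument.

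Next, suppose the event in \eqref{eq:2crossings} occurs, and let $\pi_1,\pi_2\subseteq\ballZ(0,2(2R+1)N)\cap\mathcal V^u$ be two vacant nearest-neighbor paths from $\ballZ(0,(2R+1)N)$ to $\intb\ballZ(0,2(2R+1)N)$ lying in distinct connected components of $\ballZ(0,2(2R+1)N)\cap\mathcal V^u$. Since $\mathcal C_{k_N}$ is a connected subset of $\ballZ(0,2(2R+1)N)\cap\mathcal V^u$, at most one of $\pi_1,\pi_2$ can meet $\mathcal C_{k_N}$: if both did, $\pi_1\cup\pi_2\cup\mathcal C_{k_N}$ would be connected in $\ballZ(0,2(2R+1)N)\cap\mathcal V^u$, contradicting that $\pi_1$ and $\pi_2$ lie in different components. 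Fix $\pi\in\{\pi_1,\pi_2\}$ with $\pi\cap\mathcal C_{k_N}=\emptyset$ and let $z$ be its first vertex. Then $z\in\ballZ(0,(2R+1)N)$ and $z\notin\mathcal C_{k_N}$, and since $\pi$ reaches $\intb\ballZ(0,2(2R+1)N)$ while $(2R+1)K_{N,k_N}\le 2(2R+1)N$, the initial segment of $\pi$ up to its first visit to $\intb\ballZ(0,(2R+1)K_{N,k_N})$ is a vacant nearest-neighbor path from $z$ to $\intb\ballZ(0,(2R+1)K_{N,k_N})$. Hence all three conditions of Definition~\ref{def:Ak} hold with $k=k_N$, so $\mathcal A_{z,k_N}$ occurs, and therefore the event in \eqref{eq:2crossings} is contained in $\mathcal H_N^c\cup\bigcup_{z\in\ballZ(0,(2R+1)N)}\mathcal A_{z,k_N}$.

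Finally I would conclude by a union bound: by \eqref{eq:HN:proba} and \eqref{eq:strongsupercriticality},
\begin{multline*}
\mathbb P\big[\,\text{event in }\eqref{eq:2crossings}\,\big]\le \mathbb P[\mathcal H_N^c]+\sum_{z\in\ballZ(0,(2R+1)N)}\mathbb P[\mathcal A_{z,k_N}]\\
\le Ce^{-N^c}+|\ballZ(0,(2R+1)N)|\,(1-\gamma)^{k_N}\le Ce^{-N^c}+C(R)\,N^{d}\,e^{-c_\gamma\lfloor\sqrt N\rfloor},
\end{multline*}
with $c_\gamma=-\log(1-\gamma)>0$; the last term is bounded by $C'e^{-N^{1/3}}$ uniformly in $N\ge 1$ after enlarging the constant, which gives \eqref{eq:2crossings} with a suitable $c$ and $C$. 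All of the probabilistic content sits inside Corollary~\ref{cor:bad:*paths}, Claim~\ref{claim:HN:proba} and Lemma~\ref{l:eventA:estimate}; accordingly, I expect the only real obstacle to be the purely deterministic implication of the third paragraph, and in particular the radius bookkeeping of the second paragraph guaranteeing that $\mathcal C_{k_N}$ is connected and contained in $\ballZ(0,2(2R+1)N)$.
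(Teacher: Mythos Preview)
Your proposal is correct and follows exactly the paper's approach: on $\mathcal H_N$ the event in \eqref{eq:2crossings} forces $\mathcal A_{z,k_N}$ for some $z\in\ballZ(0,(2R+1)N)$, and then a union bound together with \eqref{eq:HN:proba} and \eqref{eq:strongsupercriticality} gives the result; indeed you supply more detail than the paper's two-line argument. One small slip in the bookkeeping: from $K_{N,k_N}=N+k_N^2\le 2N$ you cannot conclude $(2R+1)K_{N,k_N}+R\le 2(2R+1)N$ (equality $K_{N,k_N}=2N$ occurs when $N$ is a perfect square, and then the left side exceeds the right by $R$), but this is harmless---e.g., run the same argument with $k_N-1$ shells in that case, or simply absorb those $N$ into the constant $C$.
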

\begin{proof}[Proof of Lemma~\ref{l:2crossings}]
Recall Definition~\ref{def:HN} of $\mathcal H_N$ and Definition~\ref{def:Ak} of $\mathcal A_{z,k}$. 
Note that when $\mathcal H_N$ occurs, the event in \eqref{eq:2crossings} implies that 
$\mathcal A_{z,k_N}$ occurs for some $z\in \ballZ(0,(2R+1)N)$. 
Therefore, we can bound the probability in \eqref{eq:2crossings} from above by 
\[
\mathbb P[\mathcal H_N^c] + \sum_{z\in\ballZ(0,(2R+1)N)}\mathbb P[\mathcal A_{z,k_N}] .\
\]
The result now follows from \eqref{eq:HN:proba} and \eqref{eq:strongsupercriticality}. 
\end{proof}
As an immediate corollary to Lemma~\ref{l:2crossings}, we obtain that for 
$u_1$ as in \eqref{eq:Ru_1} there exist constants $c = c(d)>0$ and $C = C(d)<\infty$ such that 
for all $u \leq u_1$ and $n\geq 1$, 
\begin{equation}\label{eq:2crossings:alln}
\mathbb P\left[
\begin{array}{c}
\text{$\ballZ(0,3n)\cap\mathcal V^u$ contains two paths from $\ballZ(0,n)$ to $\intb\ballZ(0,3n)$}\\
\text{which are in different connected components of $\ballZ(0,3n)\cap\mathcal V^u$}
\end{array}
\right]
\leq C\cdot e^{-n^c} .\
\end{equation}
We are now ready to prove \eqref{eq:sts:inerlacement:2}. 
Take $u_1$ as in \eqref{eq:Ru_1} and $u \leq u_1$. 
It suffices to consider $n\geq 100$. Let $k = \lfloor n/100\rfloor$. 
Note that if $\ballZ(0,n)\cap\mathcal V^u$ contains at least $2$ different connected components $\mathcal C_1$ and $\mathcal C_2$ 
with diameter $\geq n/10$, then 
there exist two vertices $x_1,x_2\in\ballZ(0,n)$ (possibly equal) such that 
$\mathcal C_i\cap\ballZ(x_i,k)\neq\emptyset$ and $\mathcal C_i\setminus\ballZ(x_i,7k)\neq\emptyset$, for $i\in\{1,2\}$. 

For $x\in\ballZ(0,n)$, let $\mathcal A_x$ be the event that 
\begin{itemize}
\item[(a)]
$\ballZ(x,k)$ is connected to $\intb\ballZ(x,7k)$ in $\mathcal V^u$, and 
\item[(b)]
every two nearest-neighbor paths from $\ballZ(x,2k)$ to $\intb\ballZ(x,6k)$ in $\mathcal V^u$ 
are in the same connected component of $\mathcal V^u\cap\ballZ(x,6k)$. 
\end{itemize}
Let $\mathcal A = \cap_{x\in\ballZ(0,n)}\mathcal A_x$. 
By \eqref{eq:sts:interlacement:1} and \eqref{eq:2crossings:alln}, we have 
\[
\mathbb P\left[\mathcal A\right]\geq 1 - C\cdot e^{-n^c} .\
\]
However, if the event $\mathcal A$ occurs, then $\mathcal C_1$ and $\mathcal C_2$, defined earlier, cannot exist. 
Indeed, take a nearest-neighbor path $\pi = (z_1,\ldots,z_t)$ in $\ballZ(0,n)$ from $x_1$ to $x_2$. 
For each $1\leq i\leq t-1$, the occurrence of the events $\mathcal A_{z_i}$ and $\mathcal A_{z_{i+1}}$ implies that 
(a) there exist nearest-neighbor paths $\pi_1$ and $\pi_2$ in $\mathcal V^u$, 
$\pi_1$ from $\ballZ(z_i,k)$ to $\intb\ballZ(z_i,7k)$, and 
$\pi_2$ from $\ballZ(z_{i+1},k)$ to $\intb\ballZ(z_{i+1},7k)$, and 
(b) any two such paths are in the same connected component of $\mathcal V^u\cap\ballZ(0,2n)$. 
This implies that $\mathcal C_1$ and $\mathcal C_2$ must be connected in $\mathcal V^u\cap\ballZ(0,2n)$. 
As a result, we have 
\[
\mathbb P\left[
\begin{array}{c}
\text{any two connected subsets of $\mathcal V^u\cap\ballZ(0,n)$ with}\\
\text{diameter $\geq n/10$ are connected in $\mathcal V^u\cap\ballZ(0,2n)$}
\end{array}
\right]
\geq 
\mathbb P\left[\mathcal A
\right] \geq 1-C\cdot e^{-n^c} .\
\]
This implies \eqref{eq:sts:inerlacement:2}. 
The proof of Theorem~\ref{thm:sts:interlacement} is completed.

\section{Extensions to other models}\label{sec:bonus}

\subsection{Random walk on $\Z^d$}

Consider a simple random walk on $\Z^d$, $d\geq 3$, started at $x\in\Z^d$. 
The random walk is transient, and the probability that $y\in\Z^d\setminus\{x\}$ is 
ever visited by the random walk is comparable to $|x-y|^{2-d}$. 

Let $\mathcal V$ be the set of vertices which are never visited by the random walk. 
The approach that we develop in this paper also applies to the study of 
the local connectivity properties of $\mathcal V$. 
Similarly to the proof of Theorem~\ref{thm:sts:interlacement}, one can show that 
the set $\mathcal V$, viewed as a random subgraph of $\Z^d$, 
contains a unique infinite connected component, which is also locally unique. 
Namely, the statements \eqref{eq:sts:interlacement:1} and \eqref{eq:sts:inerlacement:2} hold with 
$\mathcal V^u$ replaced by $\mathcal V$, and the law $\mathbb P$ of random interlacements replaced
by the law of a simple random walk started from $x\in\Z^d$. 

\subsection{Random walk on $(\Z/N \Z)^d$}

Consider a simple discrete time random walk on a $d$-dimensional torus $(\Z/N\Z)^d$, with $d\geq 3$. 
The vacant set at time $t$ is the set of vertices which have not been visited by the random walk up to time $t$. 
We view the vacant set as a (random) graph by drawing an edge between any two vertices of the vacant set at $L_1$-distance $1$ 
from each other. 
The study of percolative properties of the vacant set was initiated in \cite{BenjaminiSznitman} 
and recently significantly boosted in \cite{teixeira_windisch}. 
It was proved in \cite[Theorems 1.2 and 1.3]{teixeira_windisch} that the vacant set at time $\lfloor uN^d\rfloor$ 
exhibits different connectivity properties for small and large $u$: 
\begin{itemize}\itemsep1pt
\item[(i)]
if $u$ is large, there exists $\lambda = \lambda(u)<\infty$, such that the largest
 connected component of the vacant set at time $\lfloor uN^d\rfloor$ is 
smaller than $(\log N)^{\lambda}$ asymptotically almost surely, and 
\item[(ii)]
if $u>0$ is small, there exists $\delta = \delta(u)>0$, such that  
the largest connected component of the vacant set
 at time $\lfloor u N^d\rfloor$ is larger than $\delta N^d$ asymptotically almost surely, 
\end{itemize}
where ``asymptotically almost surely'' means ``with probability going to $1$ as $N\to\infty$''. 
Moreover, it is proved in \cite[Theorem~1.4]{teixeira_windisch} that when $d\geq 5$ and $u$ is small enough, with high probability, 
the vacant set on the torus at time $\lfloor uN^d\rfloor$ has the following properties: 
\begin{itemize}\itemsep1pt
\item[(a)]
the largest connected component has an asymptotic density, and 
\item[(b)]
the size of the second largest connected component is at most $(\log N)^\kappa$, for some $\kappa>0$. 
\end{itemize}
The proof of \cite[Theorem~1.4]{teixeira_windisch} relies on 
a strong coupling between random interlacements and the random walk trace (see \cite[Theorem~1.1]{teixeira_windisch}) 
and the existence of
\emph{strongly supercritical} values of $u$ for $d \geq 5$ 
(see \cite[Definition 2.4 and Remark 2.5]{teixeira_windisch}).
We believe that the ideas used in the proof of Theorem~\ref{thm:sts:interlacement}
can be applied in order to yield an extension of \cite[Theorem~1.4]{teixeira_windisch}
 for all $d\geq 3$ and small enough $u$, despite the fact that Theorem~\ref{thm:sts:interlacement}
does not imply the existence of strongly supercritical values of $u$.

\section{Appendix: Decoupling inequalities for interlacement local times}

In this appendix we prove Lemma \ref{l:probability:cascading}. 
The proof is essentially the same as the proof of \cite[Corollary~3.5]{Sznitman:Decoupling}. 
We sketch the main ideas here and refer the reader to corresponding formulas in \cite{Sznitman:Decoupling} for details.

\subsection{Notation from \cite[Section 1]{Sznitman:Decoupling}} 

For $K \subset \subset \Z^d$, we denote by $s_K: W^*_K \rightarrow W_K$ the map which 
associates with each element $w^* \in W^*_K$ the unique element $w^0 = s_K(w^*)\in W_K$ 
such that (a) $\pi^*(w^0) = w^*$ and (b)
$w^0(0)\in K$, $w^0(t)\notin K$ for all $t < 0$. 
For $w \in W$, we denote by $w_+$ the element in $W_+$ (see \eqref{def_eq_W_plus}) such that 
$w_+(n) = w(n)$, for $n\geq 0$.

For a finite measure $\rho$ on $\Z^d$, we denote by $P_{\rho}$ the measure
$\sum_{x \in \Z^d} \rho(x)P_x$ on $(W_+,\mathcal{W}_+)$.

Let $\omega= \sum_{i \geq 1} \delta_{(w^*_i,u_i)}$ be the interlacement point process on $W^*\times\R_+$ 
defined on the canonical probability space $(\Omega, \salgebra, \mathbb{P})$. 
For $K \subset \subset \Z^d$, and $0\leq u' < u$, 
we define on $(\Omega, \salgebra, \mathbb{P})$ the Poisson 
point processes on the space $W_+$ denoted by $\mu_{K,u}$ and $\mu_{K,u',u}$ in the following way:
\begin{equation}\label{def_eq_mu_K_u}
\begin{split}\itemsep1pt
\mu_{K,u',u} &=
 \sum_{i \geq 1}  \mathds{1}_{\{ w^*_i \in W^*_K, \; u' \leq u_i < u\}}  \delta_{s_K(w^*_i)_+}  \\
\mu_{K,u}&=\mu_{K,0,u} 
\end{split}
\end{equation}
With these definitions, we have (analogously to \cite[(1.27), (1.28)]{Sznitman:Decoupling}):
for $K\subset\subset\Z^d$ and $0\leq u'<u$, 
\begin{itemize}\itemsep1pt
\item[(i)]
$\mu_{K,u',u}$ and $\mu_{K,u'}$ are independent with respective intensity measures
$(u-u') P_{e_K}$ and $u' P_{e_K}$, 
\item[(ii)]
$\mu_{K,u} = \mu_{K,u'} + \mu_{K,u',u}$.
\end{itemize}

Let $I$ denote a finite or countable set. If $\mu =\sum_{i \in I} \delta_{w_i}$ is a point measure on $W_+$,
we define (by slightly abusing the notation of \eqref{def_eq_local_time_at_level_u})
the local time of $\mu$ at $x \in \Z^d$ to be
\begin{equation}\label{local_time_of_point_measure_of_trajectories}
\local_x(\mu)=  
\sum_{i \in I} \; \;  \sum_{n \in \N} \mathds{1}_{\{ w_i(n)=x \} } .\
\end{equation}

Using \eqref{def_eq_local_time_at_level_u}, \eqref{def_eq_mu_K_u} and \eqref{local_time_of_point_measure_of_trajectories}, 
we obtain that for any $\omega\in\Omega$, $K\subseteq K'\subset\subset \Z^d$, and $u\geq 0$, 
\begin{equation}\label{local_time_mu_K_u}
\local^u_x(\omega) = \local_x(\mu_{K',u}), \quad \text{for $x\in K$.} 
\end{equation}

\subsection{Decoupling inequalities for the interlacement local times}

In this section we extend the results of \cite[Section 2, 3]{Sznitman:Decoupling} about 
certain decoupling inequalities for increasing events in $\{0,1\}^{G\times\Z}$ to increasing events in $\loczd$. 
The graphs $G$ considered in \cite{Sznitman:Decoupling} are infinite, connected, bounded degree weighted graphs, 
satisfying certain regularity conditions, and in particular, include the case of $\Z^{d-1}$, with $d\geq 3$. 

\medskip

Since our current aim is to prove Lemma \ref{l:probability:cascading} on $\Z^d$, 
the notation of \cite{Sznitman:Decoupling} become slightly simpler. 
When $G=\Z^{d-1}$, 
the volume growth exponent of $G$ is $\alpha=d-1$, 
the diffusivity exponent of the random walk on $G$ is $\beta=2$, thus 
$\nu= \alpha -\frac{\beta}{2}=d-2$ is the ususal exponent of the Green function on $\Z^d$, c.f. \eqref{Green_decay}
and \cite[(0.2)]{Sznitman:Decoupling}.
Moreover, in \cite[(0.3)]{Sznitman:Decoupling}, a special metric $d(\cdot,\cdot)$ 
on $G \times \Z$ is introduced, but in our special case $\Z^d = G \times \Z$, 
the results of \cite{Sznitman:Decoupling} remain valid if we
replace the distance $d(x,x')$ by the usual sup-norm distance $|x-x'|$, 
 c.f. the first paragraph of \cite[Section 2]{Sznitman:Decoupling}.

\begin{remark}
The definition
\eqref{def_eq_local_time_at_level_u} carries over to the more general setting which involves local times of the 
interlacement point process on $G \times \Z$ (where $G$ satisfies the conditions described in \cite[Section 1]{Sznitman:Decoupling}),
and in fact all the results and proofs of \cite[Section 2, 3]{Sznitman:Decoupling} have their analogous, 
more general counterparts which involve $\local^u$ rather than $\mathcal{I}^u$.
To simplify the notation, we only consider the special case of $G = \Z^{d-1}$ here. 
\end{remark}

Now we recall some notation from  \cite[Section 2]{Sznitman:Decoupling}, which we adapt to our setting.

Our definition of the length scales $L_n=l_0^n L_0$ in \eqref{def:scalesLn} 
is the same as \cite[(2.1)]{Sznitman:Decoupling}.

For $n \ge 0$, we denote the dyadic tree of depth $n$  by $T_n = \bigcup_{0 \le k \le n} \{1,2\}^k$
  and the set of vertices of the tree
 at depth $k$ by $T_{(k)} = \{1,2\}^k$. We call $\emptyset \in T_{(0)}$ the root of $T_n$ and
$1,2 \in T_{(1)}$ the children of the root.
Given a mapping $\mathcal{T}: T_n \to \Z^d$, we define
\[
x_{m, \mathcal T} 
= \mathcal T(m), \;\; \widetilde C_{m, \mathcal T} 
= \ballZ(x_{m, \mathcal T}, \, 10 L_{n-k}), \;\mbox{for} \; m \in T_{(k)}, \; 0 \le k \le n\,.
\]
For any $0 \le k < n$, $m \in T_{(k)}$, we say that  $m_1,m_2$ are the two descendants of
 $m$ in $T_{(k+1)}$ if they are obtained by respectively concatenating $1$ and $2$ to $m$.
We say that $\mathcal{T}$ is an admissible embedding if for any $0 \le k < n$ and $m \in T_{(k)}$, 
\[
\widetilde C_{m_1,\mathcal T} \cup \widetilde C_{m_2,\mathcal T} 
\subseteq 
\widetilde C_{m,\mathcal T}\,, \qquad
|x_{m_1,\mathcal T}- x_{m_2,\mathcal T}| \ge \frac{1}{100} \;L_{n-k}\,. 
\]
For any $x \in \Z^d$ and $n\in \N$, we 
denote by $\Lambda_{x,n}$ the set of admissible embeddings of $T_n$ in $\Z^d$ with $\mathcal{T}(\emptyset)=x$, 
and let $\Lambda_n = \cup_{x \in \Z^d} \Lambda_{x,n}$.

\medskip
Recall the definition of the space $(\loczd,\loczdsig)$ and the coordinate maps $\Psi_x$, $x \in \Z^d$
 from Section \ref{subsection_local_times}.
Given $n \ge 0$ and $\mathcal{T} \in \Lambda_n$, we say that 
a collection $(B_m~:~m \in T_{(n)})$ of $\loczdsig$-measurable subsets of $\loczd$
     is  $\mathcal{T}$-adapted if 
\begin{equation}\label{eq:2.5}
\mbox{$B_m$ is $\sigma(\Psi_x, x \in \widetilde C_{m,\mathcal T})$-measurable for each $m \in T_{(n)}$}\,.
\end{equation}
Recall that given $u \ge 0$, the collection of $\salgebra$-measurable events 
$(B^u_m~:~m \in T_{(n)})$ is defined by \eqref{def:Au}.

\medskip

For $n \ge 0$ and $\mathcal T \in \Lambda_{n+1}$, we
 denote by $\mathcal T_1\in \Lambda_n$ the embedding of $T_n$ 
 corresponding to the restriction of $\mathcal T$ to the descendants 
of $1 \in T_{(1)}$ in $T_{n+1}$. We define $\mathcal T_2$ similarly using $2 \in T_{(1)}$.
Given a $\mathcal T$-adapted collection $(B_m~:~m \in T_{(n+1)})$,
we then define the $\mathcal T_1$-adapted collection $(B_{m,1}~:~m \in T_{(n)})$ 
and the $\mathcal T_2$-adapted collection  $(B_{m,2}~:~m \in T_{(n)})$
 in a natural way.

\medskip

We can now restate and adapt \cite[Theorem 2.1]{Sznitman:Decoupling} to fit our setting related 
to  $\local^u$ on $\Z^d$.

\begin{theorem}\label{theo2.1} 

\medskip
There exist $c=c(d)>0$ and $c_1=c_1(d)>0$  such 
that for all $l_0 \ge c$, $n \ge 0$,  $\mathcal{T} \in \Lambda_{n+1}$, any
$\mathcal{T}$-adapted collection $(B_m~:~m \in T_{(n+1)})$ of increasing events
on $(\loczd, \loczdsig)$, and any $0 < u' < u$ satisfying
\[
u \ge \big(1 + c_1  \;(n+1)^{-\frac{3}{2}} \,l_0^{-\frac{d-2}{4}}\big) \,u'\,,
\]
we have
\begin{equation}\label{our_decoupling}
\mathbb P \Big[ \bigcap\limits_{m \in T_{(n+1)}} B^{u'}_m\Big]  
\leq 
\mathbb P \Big[\bigcap\limits_{\overline{m}_1 \in T_{(n)}} B^u_{\overline{m}_1,1}\Big] 
\mathbb P \Big[\bigcap\limits_{\overline{m}_2 \in T_{(n)}} B^u_{\overline{m}_2,2}\Big] 
+ 
2 \exp\left(-2u' \, \frac{2}{(n+1)^3} \,L^{d-2}_n \, l_0^{\frac{d-2}{2}}\right) .\
\end{equation}
\end{theorem}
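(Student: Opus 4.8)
The plan is to adapt the proof of \cite[Theorem~2.1]{Sznitman:Decoupling}, replacing the events in $\{0,1\}^{G\times\Z}$ depending on $\mathcal I^u$ by events in $(\loczd,\loczdsig)$ depending on the local-time field $\local^u$; this is legitimate because the trajectory-level identities recorded above, notably \eqref{def_eq_mu_K_u} and \eqref{local_time_mu_K_u}, are already phrased in terms of $\local$. The first step is to reduce the tree estimate to a two-box estimate. The leaves $T_{(n+1)}$ split according to which child of the root they descend from, so $\bigcap_{m\in T_{(n+1)}}B^{u'}_m=A_1^{u'}\cap A_2^{u'}$ with $A_i=\bigcap_{\overline m\in T_{(n)}}B_{\overline m,i}$. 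By \eqref{eq:2.5}, the nesting $\widetilde C_{m_1,\mathcal T}\cup\widetilde C_{m_2,\mathcal T}\subseteq\widetilde C_{m,\mathcal T}$ built into admissible embeddings, and the identification $\widetilde C_{\overline m,\mathcal T_i}=\widetilde C_{i\overline m,\mathcal T}$, each $A_i$ is a $\sigma(\Psi_x,x\in U_i)$-measurable increasing subset of $\loczd$, where $U_i:=\widetilde C_{i,\mathcal T}=\ballZ(x_{i,\mathcal T},10L_n)$; moreover $U_1,U_2$ are disjoint and $\mathrm{dist}(U_1,U_2)\ge \tfrac1{100}L_{n+1}-20L_n\ge c\,l_0L_n$ once $l_0$ is large. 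Thus it suffices to prove, for disjoint $\sigma(\Psi_x,x\in U_i)$-measurable increasing $A_i$ with $\mathrm{dist}(U_1,U_2)\ge c\,l_0L_n$,
\[ \mathbb P\big[A_1^{u'}\cap A_2^{u'}\big]\ \le\ \mathbb P\big[A_1^{u}\big]\,\mathbb P\big[A_2^{u}\big]\ +\ 2\exp\!\Big(-\tfrac{4u'}{(n+1)^{3}}\,L_n^{d-2}\,l_0^{(d-2)/2}\Big). \]

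Next I would pass to the Poissonian picture. With $K:=U_1\cup U_2$, identity \eqref{local_time_mu_K_u} shows that $A_1^{u'}$ and $A_2^{u'}$ are measurable functions of the Poisson point process $\mu_{K,u'}$ on $W_+$ with intensity $u'P_{e_K}$; in fact $(\local^{u'}_x)_{x\in U_i}=(\local_x(\mu_{U_i,u'}))_{x\in U_i}$, and $\mu_{U_i,u'}$ is the sub-process of $\mu_{K,u'}$ built from the trajectories hitting $U_i$, run from their first visit to $U_i$. Splitting $\mu_{K,u'}$ by Poisson colouring into trajectories hitting $U_1$ but not $U_2$, $U_2$ but not $U_1$, and both, one obtains three independent processes, so the $U_1$-field and the $U_2$-field are independent \emph{up to the contribution of the trajectories hitting both boxes} and up to the fact that the starting measure of the ``hit $U_i$ first'' process is a slightly distorted version of $e_{U_i}$. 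I would control both discrepancies using the concentric intermediate boxes $\widehat U_i:=\ballZ(x_{i,\mathcal T},L_nl_0^{1/2})$: by \eqref{eq:hittingprobability:equality}--\eqref{eq:hittingprobability:bounds} and the Green-function bounds \eqref{Green_decay}, a walk started near $U_i$ returns to $U_i$ after exiting $\widehat U_i$ with probability $O(l_0^{-(d-2)/2})$, and a walk started near $U_1$ ever reaches $U_2$ with probability $O(l_0^{-(d-2)})$, so after a Poisson large-deviation step (run at an $(n+1)^{-3}$-fraction of the intensity, with error $\exp(-cu'(n+1)^{-3}\mathrm{cap}(\widehat U_i))=\exp(-cu'(n+1)^{-3}L_n^{d-2}l_0^{(d-2)/2})$) the remaining corrections amount, in the sense of stochastic domination of local-time fields, to an extra field of total intensity $Cu'(n+1)^{-3/2}l_0^{-(d-2)/4}L_n^{d-2}$ in each box. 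Because the $A_i$ are increasing, the sprinkling condition $u\ge(1+c_1(n+1)^{-3/2}l_0^{-(d-2)/4})u'$ is exactly what is needed so that the extra process $\mu_{U_i,u',u}$ (of intensity $(u-u')P_{e_{U_i}}$, see \eqref{def_eq_mu_K_u}) dominates that correction, via \eqref{eq:stochdomination}.

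On the complement of the large-deviation event, the construction produces a coupling of $\big((\local^{u'}_x)_{x\in U_1},(\local^{u'}_x)_{x\in U_2}\big)$ with an \emph{independent} pair $(\widehat L_1,\widehat L_2)$ such that $\widehat L_i$ has the law of $(\local^{u}_x)_{x\in U_i}$ and $\widehat L_i\ge(\local^{u'}_x)_{x\in U_i}$ coordinatewise; since $A_i$ is increasing, $A_1^{u'}\cap A_2^{u'}$ forces $\{\widehat L_1\in A_1\}\cap\{\widehat L_2\in A_2\}$, whose probability is $\mathbb P[A_1^u]\,\mathbb P[A_2^u]$. Adding the probability of the large-deviation event yields the two-box estimate, hence the theorem. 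The step I expect to be the main obstacle is the quantitative bookkeeping across the three scales $L_n\ll L_nl_0^{1/2}\ll l_0L_n$ together with the $(n+1)$-dependent factors: one must check simultaneously that the per-step loss of level is $\le c_1(n+1)^{-3/2}l_0^{-(d-2)/4}u'$ — so that $\sum_n(n+1)^{-3/2}<\infty$ and $\prod_n\big(1+c_1(n+1)^{-3/2}l_0^{-(d-2)/4}\big)$ stays bounded, which is what makes the iteration in Lemma~\ref{l:probability:cascading} produce a nondegenerate final level — and that the per-step additive error $2\exp\big(-4u'(n+1)^{-3}L_n^{d-2}l_0^{(d-2)/2}\big)$ is summable against the $2^{n}$ nodes living at scale $L_n$. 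Getting the exponents $-(d-2)/4$ and $(d-2)/2$ to match is precisely where the intermediate scale $\widehat U_i=\ballZ(x_{i,\mathcal T},L_nl_0^{1/2})$ is chosen, and all of this is carried out in \cite[Section~2]{Sznitman:Decoupling}.
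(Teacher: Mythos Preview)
Your proposal is correct and takes essentially the same route as the paper: both adapt \cite[Theorem~2.1]{Sznitman:Decoupling} to local-time events, and you correctly identify the three-scale structure $L_n\ll L_n l_0^{1/2}\ll l_0 L_n$, the role of sprinkling, and the Poisson large-deviation error term. The only notable difference is how the trajectory decomposition is phrased. You split $\mu_{K,u'}$ by which of the two small boxes each trajectory visits, whereas the paper follows Sznitman's specific implementation and decomposes $\mu_{\setW,u'}$ by the number of returns to an intermediate set $\setW$ (with $V\subseteq \setW\subseteq U$, and note their $U_i=\ballZ(x_{i,\mathcal T},L_{n+1}/1000)$ are the \emph{large} boxes, not your small ones), obtaining processes $\zeta_l'$, $\zeta_1^*$ for which the one-return local times on $\widehat C_1$ and $\widehat C_2$ are automatically independent (their~\eqref{eq:2.63}) and the multi-return contribution is dominated by the sprinkled one-return process via the coupling~\eqref{eq:2.59}. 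Your ``hit both'' trajectories play the role of the multi-return ones and your ``starting-measure distortion'' is absorbed into the same sprinkling step, so the two descriptions encode the same mechanism; the return-time decomposition simply makes the independence~\eqref{eq:2.63} immediate without a separate distortion argument.
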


\begin{proof}
 The proof is analogous to that of \cite[Theorem 2.1]{Sznitman:Decoupling}. 
We only need to mechanically replace events defined in terms of 
$\mathcal{I}^u$ (see \eqref{def_eq_interlacement_at_level_u}) 
by events defined in terms of $\local^u$ (see \eqref{def_eq_local_time_at_level_u}).

When we adapt \cite[Theorem 2.1]{Sznitman:Decoupling} to suit our purposes, we make the 
following choices:  $\Z^d = G \times \Z$,  $G=\Z^{d-1}$, $\alpha=d-1$,  $\beta=2$,  
$\nu=d-2$, we use the sup-norm distance $|x-x'|$ on $\Z^d$ 
(c.f. the first paragraph of \cite[Section 2]{Sznitman:Decoupling}), moreover we choose
$K=2$ and $\nu' =\frac{d-2}{2}$ (where the latter parameters appear in the statement of
\cite[Theorem 2.1]{Sznitman:Decoupling}).

From \cite[(2.11)]{Sznitman:Decoupling} to \cite[(2.59)]{Sznitman:Decoupling}, we do not need to modify 
the proof at all, but we recall some further notation  before we state
 the key domination result \eqref{eq:2.59}.

Given $n \ge 0$ and 
$\mathcal{T} \in \Lambda_{n+1}$,  
we define, as in \cite[(2.11) and (2.13)]{Sznitman:Decoupling}, 
\[
\widehat C_i = \bigcup_{m \in T_{(n)}}
\widetilde C_{m,\mathcal T_i}, \;\; \mbox{for} \;\; i \in \{1,2\}, 
\;\; \mbox{and} \;\; 
V = \widehat C_1 \cup \widehat C_2\,,
\]
and 
\[
U_i = \ballZ \Big(x_{i,\mathcal T}, \; \frac{L_{n+1} }{1000}\Big), \;\; \mbox{for} \;\; i \in \{1,2\}, 
\;\; \mbox{and} \;\; 
U = U_1 \cup U_2 \,.
\]
Finally, we take a set $\setW \subset \Z^d$ such that $V \subseteq \setW \subseteq U$. 
Recall the notation \eqref{def:HU} and \eqref{def:TU}. 
For a trajectory in $W_+$ (see \eqref{def_eq_W_plus}), 
we define the sequence of successive returns to $\setW$ and departures from $U$:
\begin{equation}\label{eq:2.16}
\begin{split}
R_1 & = H_\setW, \, D_1 = T_U \circ \theta_{R_1} + R_1, \;\; \mbox{and by induction}
\\ 
R_{k+1} & = R_1 \circ \theta_{D_k} + D_k, \, D_{k+1} = D_1 \circ \theta_{D_k} + D_k , \;\mbox{for} \; k \ge 1\,,
\end{split}
\end{equation} 
where it is understood that if $R_k =\infty$ for some $k\geq 1$, then $D_k=R_{k+1}=\infty$.
Let $0\leq u'<u$. 
Recalling \eqref{def_eq_mu_K_u},  we introduce, similarly to \cite[(2.17)]{Sznitman:Decoupling}, the Poisson point processes on $W_+$,
\[
\begin{split}
\zeta_l' & = \mathds{1}_{\{ R_l < \infty = R_{l + 1}\}} \,\mu_{\setW,u'}, \;\; \mbox{for $l \ge 1$}\,,
\\ 
\zeta_l^* & = \mathds{1}_{\{ R_l < \infty = R_{l + 1}\}} \,\mu_{\setW,u',u}, \;\; \mbox{for $l \ge 1$}\,.
\end{split}
\]
Both $ \zeta_l'$ and $\zeta_l^* $ are supported on the subspace of $W_+$ which consists 
of trajectories that perform exactly $l$ returns to $\setW$ in the sense of \eqref{eq:2.16}.
 By the properties of $\mu_{\setW,u'}$ and $\mu_{\setW,u',u}$, 
\begin{equation}\label{eq:2.18}
\text{$\zeta_l'$, $l \ge 1$, and $\zeta_1^*$ are independent Poisson point processes on $W_+$}.
\end{equation} 
Recalling \eqref{local_time_of_point_measure_of_trajectories}, we define the local times 
\[
\begin{split}
\local_{l,x}' 
=
\local_x(\zeta_l'), \qquad
&\local_{l}' 
=
\left( \local_{l,x}'~:~x \in V\right), \quad \text{ for } l \geq 1,\\
\local^*_{1,x} 
=
\local_x( \zeta^*_1), \qquad
&\local_{1}^* 
= \left( \local^*_{1,x}~:~x \in V\right) .
\end{split}
\]
These definitions are counterparts of \cite[(2.60) and (2.61)]{Sznitman:Decoupling}. 
It follows from \eqref{eq:2.18} and \eqref{local_time_mu_K_u} that 
\begin{equation}\label{eq:2.62}
\begin{array}{c}
\text{the random variables $\local_l'$, $l\geq 1$, and $\local_1^*$ are independent,}\\
\text{$\local^{u'}_x = \sum_{l\geq 1} \local_{l,x}'$, and $\local^u_x \geq \local_{1,x}^*+\local_{1,x}'$, for all $x\in V$.}
\end{array}
\end{equation}
This is analogous to \cite[(2.62)]{Sznitman:Decoupling}. 
Moreover, similarly to \cite[(2.64)]{Sznitman:Decoupling}, we have that
\begin{equation}\label{eq:2.63}
\text{$(\local_{1,x}^* + \local_{1,x}'~:~x\in\widehat C_1)$ and 
$(\local_{1,x}^* + \local_{1,x}'~:~x\in\widehat C_2)$ are independent.}
\end{equation}

\medskip

The main ingredients in the proof of \cite[Theorem~2.1]{Sznitman:Decoupling} are \cite[Lemma~2.4 and (2.59)]{Sznitman:Decoupling}. 
We will only use a weaker result that immediately follows from \cite[Lemma~2.4 and (2.59)]{Sznitman:Decoupling}:
for a specific choice of $\setW$ 
(see \cite[(2.15) and (2.58)]{Sznitman:Decoupling}), 
there exists a coupling $(\overline\local',\overline\local^*)$ on 
$(\overline\Omega, \overline{\salgebra} ,\overline{\mathbb P})$ of 
$\sum_{l\geq 2}\local_l'$ and $\local_1^*$ such that 
\begin{equation}\label{eq:2.59}
\begin{array}{l}
\mbox{if $l_0 \ge c(d)$ and 
$u \ge \big(1 + c_1  \;(n+1)^{-\frac{3}{2}} \,l_0^{-\frac{d-2}{4}}\big) \,u'$, then}
\\[2ex]
\overline{\mathbb P}\left[ \overline\local' \leq   \overline{\local}^* \right] 
\geq 1- 2 \exp\left( -  u' \; \frac{4}{(n+1)^3} \;L^{d-2}_n \, l^{\frac{d-2}{2}}_0 \right).
\end{array}
\end{equation}
Informally, \eqref{eq:2.59} states that with high probability, the local times in $V$ of the collection of interlacement
 trajectories 
  which have labels less than $u'$ 
and reenter $\setW$ after leaving $U$
are dominated by the local times in $V$ of the collection of interlacement trajectories with labels between $u'$ and $u$ 
that never reenter $\setW$ after leaving $U$.

\bigskip

We now prove  \eqref{our_decoupling} by mimicking \cite[(2.68)]{Sznitman:Decoupling}.
We recall the notation from (\ref{def:Au}). 
Let $l_0$, $u$ and $u'$ satisfy \eqref{eq:2.59}. 
Since the $B_m$, $m \in T_{(n+1)}$, are increasing and $\mathcal{T}$-adapted, cf. \eqref{eq:2.5}, we see that

\begin{multline*}
\mathbb P \Big[\bigcap\limits_{m \in T_{(n+1)}} B^{u'}_m\Big] 
\stackrel{\eqref{eq:2.62}}{=} 
\mathbb P \Big[\bigcap\limits_{m \in T_{(n+1)}} B_m \Big( \sum_{l=1}^{\infty} \local'_{l}  \Big)\Big] \\
\stackrel{\eqref{eq:2.62}, \eqref{eq:2.59}}{\leq}
\mathbb P \Big[\bigcap\limits_{m \in T_{(n+1)}} B_m( \local_{1}^* + \local'_{1} )\Big] 
+2 \exp\left( -  u' \; \frac{4}{(n+1)^3} \;L^{d-2}_n \, l_0^{\frac{d-2}{2}} \right)\\
\stackrel{\eqref{eq:2.62},\eqref{eq:2.63} }{\leq}
\mathbb P \Big[\bigcap\limits_{\overline{m}_1 \in T_{(n)}} B^u_{\overline{m}_1,1}\Big] \,
\mathbb P  \Big[\bigcap\limits_{\overline{m}_2 \in T_{(n)}} B^u_{\overline{m}_2,2}\Big] 
+ 2 \exp\left( -  u' \; \frac{4}{(n+1)^3} \;L^{d-2}_n \, l_0^{\frac{d-2}{2}} \right) .\
\end{multline*}
This is precisely \eqref{our_decoupling}. 
\end{proof}

\bigskip

Now we derive the decoupling inequalities of \cite[Theorem 2.6]{Sznitman:Decoupling} 
adapted to our setting which involves local times.
Given $c_1$ and $l_0 \ge c$ as in Theorem \ref{theo2.1}, for any $u_0 > 0$ we define 
(analogously to \cite[(2.70)]{Sznitman:Decoupling}) 
\begin{equation}\label{eq:2.70}
u^-_\infty  
= 
u_0 \cdot \prod_{k=0}^\infty 
\Big(1 + \frac{c_1}{(k+1)^{3/2}} \; l_0^{-\frac{d-2}{4}} \Big)^{-1} .\
\end{equation}
Note that $u^-_\infty>0$ and $u^-_\infty \to u_0$ as $l_0 \to \infty$.

\begin{theorem}[Decoupling Inequalities]
  \label{theo2.6} 
For any $L_0\geq 1$, $l_0 \ge c(d)$,  $u_0 > 0$, $n \ge 0$, $\mathcal{T} \in \Lambda_n$, and all $\mathcal{T}$-adapted collections
 $(B_m~:~m \in T_{(n)})$ of increasing events on $(\loczd, \loczdsig)$, one has
\[
\mathbb P \Big[\bigcap\limits_{m \in T_{(n)}}\, B_m^{u^-_\infty}\Big]  
\leq  
\prod\limits_{m \in T_{(n)}}\, \left(\mathbb{P}[B_m^{u_0}] + \varepsilon(u^-_{\infty}, l_0, L_0)\right)\,,
\]
where 
\begin{equation}\label{eq:2.73}
\varepsilon(u,l_0, L_0) 
= 
f( 2 u L_0^{d-2} \, l_0^{\frac{d-2}{2}} ),
\;\; \mbox{ with  $f(v)= \frac{2\cdot e^{-v}}{1-e^{-v}}$ }\,.
\end{equation}
\end{theorem}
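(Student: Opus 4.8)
The plan is to iterate the one-step decoupling inequality of Theorem~\ref{theo2.1} down the dyadic tree, exactly as in the proof of \cite[Theorem~2.6]{Sznitman:Decoupling}, but bookkeeping the geometric decrease of the levels via \eqref{eq:2.70}. First I would set up the sequence of levels: define $u_k = u_0 \cdot \prod_{j=0}^{k-1}\big(1 + c_1 (j+1)^{-3/2} l_0^{-(d-2)/4}\big)^{-1}$, so that $u_0 \geq u_1 \geq u_2 \geq \dots$ is decreasing with limit $u_\infty^- = \lim_k u_k$, and $u_{k}\big(1 + c_1 (n-k+1)^{-3/2} l_0^{-(d-2)/4}\big) = u_{k-1}$ whenever the index matches, so that the hypothesis on $(u',u)$ in Theorem~\ref{theo2.1} is met at each level with $u' = u_{k}$ (or with $u' = u_\infty^-$, using that $u_\infty^- \leq u_k$ together with the stochastic domination \eqref{eq:stochdomination} for increasing events, which is what lets us start from $u_\infty^-$ rather than from the level-dependent $u_k$).

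Next I would prove by induction on $n$ the stronger-looking statement: for every $\mathcal T\in\Lambda_n$ and every $\mathcal T$-adapted collection $(B_m)_{m\in T_{(n)}}$ of increasing events,
\begin{equation*}
\mathbb P\Big[\bigcap_{m\in T_{(n)}} B_m^{u_\infty^-}\Big] \leq \prod_{m\in T_{(n)}}\Big(\mathbb P[B_m^{u_0}] + \varepsilon(u_\infty^-,l_0,L_0)\Big).
\end{equation*}
The base case $n=0$ is trivial (the product is over the single root, and $\mathbb P[B_\emptyset^{u_\infty^-}] \leq \mathbb P[B_\emptyset^{u_0}] \leq \mathbb P[B_\emptyset^{u_0}] + \varepsilon$ by \eqref{eq:stochdomination}). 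For the inductive step, given $\mathcal T\in\Lambda_n$, I would first apply Theorem~\ref{theo2.1} (with the appropriate $u' \leq u$ pair from the geometric sequence, and using \eqref{eq:stochdomination} to replace $u_\infty^-$ by the larger level where needed) to split
\begin{equation*}
\mathbb P\Big[\bigcap_{m\in T_{(n)}} B_m^{u_\infty^-}\Big] \leq \mathbb P\Big[\bigcap_{\overline m_1\in T_{(n-1)}} B_{\overline m_1,1}^{\,\cdot}\Big]\,\mathbb P\Big[\bigcap_{\overline m_2\in T_{(n-1)}} B_{\overline m_2,2}^{\,\cdot}\Big] + (\text{error}),
\end{equation*}
where the error at this top level is $2\exp(-c\, u_\infty^- (n)^{-3} L_{n-1}^{d-2} l_0^{(d-2)/2})$ or similar, then apply the induction hypothesis to each of the two subtrees $\mathcal T_1,\mathcal T_2\in\Lambda_{n-1}$. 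Collecting all the error terms produced across the $n$ levels of the recursion gives a sum $\sum_{k\geq 0} 2\exp(-c\,u_\infty^- \,2^k\, L_0^{d-2} l_0^{(d-2)/2})$ type series (the doubling of the number of boxes at each level being compensated by $L_k^{d-2} = l_0^{k(d-2)} L_0^{d-2}$ growing at least as fast), and I would check that this series is bounded by the product form on the right-hand side by the elementary inequality $\prod_i(a_i + \delta_i) \geq \prod_i a_i + \sum_i \delta_i \prod_{j\neq i} a_j \geq (\text{enough to absorb the errors})$ — more precisely, one distributes the per-box slack $\varepsilon$ so that the accumulated errors telescope, exactly as in \cite[(2.71)--(2.73)]{Sznitman:Decoupling}; this is where the precise form $f(v) = 2e^{-v}/(1-e^{-v})$ in \eqref{eq:2.73} comes from, since $f$ satisfies the functional inequality $(1+f(v))^2 \leq 1 + f(v/2^{\cdot})$-type relation that makes the induction close.

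The main obstacle, and the only place requiring genuine care, is the bookkeeping of the error terms in the inductive step: one must verify that the slack $\varepsilon(u_\infty^-,l_0,L_0)$ allotted to each of the $2^n$ leaf boxes is large enough to absorb \emph{all} the decoupling errors generated along the $n$ branchings leading to that box, uniformly in $n$ and in the embedding $\mathcal T$. This is a purely arithmetic matter — it reduces to checking the inequality $f(v) \geq 2\exp(-v) + 2(1+f(v/\lambda))$-type bound for $f(v) = 2e^{-v}/(1-e^{-v})$ and $\lambda = l_0^{(d-2)/2} \geq 2$ — but it is the crux, and it is handled identically to \cite[Theorem~2.6]{Sznitman:Decoupling}. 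Everything else is a mechanical transcription: the only change from \cite{Sznitman:Decoupling} is that increasing events live in $(\loczd,\loczdsig)$ rather than in $\{0,1\}^{G\times\Z}$, which affects nothing since the coupling \eqref{eq:2.59} and the monotonicity \eqref{eq:stochdomination} are already established in our setting.
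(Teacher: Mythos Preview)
Your proposal is correct and follows essentially the same approach as the paper: both argue that the proof is identical to that of \cite[Theorem~2.6]{Sznitman:Decoupling}, with the only modification being the specialization of parameters ($K=2$, $\nu=d-2$, $\nu'=(d-2)/2$) and the replacement of increasing events in $\{0,1\}^{G\times\Z}$ by increasing events in $(\loczd,\loczdsig)$, which is harmless given Theorem~\ref{theo2.1} and \eqref{eq:stochdomination}. Your sketch of the induction and error bookkeeping accurately reflects the structure of Sznitman's argument, whereas the paper simply omits these details and defers entirely to \cite{Sznitman:Decoupling}.
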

\begin{proof}
The proof of Theorem \ref{theo2.6} is identical to that of \cite[Theorem 2.6]{Sznitman:Decoupling}. 
We only need to make the particular choices  $K=2$, $\nu = d-2$ and  $\nu ' = \frac{d-2}{2}$, and 
replace references to \cite[Theorem 2.1]{Sznitman:Decoupling} by references to 
Theorem \ref{theo2.1}.
We omit the details.
\end{proof}

\bigskip

Recall the definition of uniformly cascading events from Definition \ref{def:cascading}.
We now restate \cite[Theorem 3.4]{Sznitman:Decoupling} adapted to our setting, 
which involves local times.

\begin{lemma}\label{l:theorem3.4} 
Consider the collection $\mathcal G = (G_{x,L,R})_{x\in \Z^d, L\geq 1, R\geq 0 }$ of increasing 
events on $(\loczd, \loczdsig)$,  cascading uniformly (in $R$) with complexity at most $\lambda$. 
Then for any $l_0 \ge c(d)$, $L_0 \geq 1$, $n \ge 0$, $u_0 > 0$ and $R\geq 0$, we have
\begin{equation}\label{eq:3.22}
\sup\limits_{x \in \Z^d} \, \mathbb P \left[G_{x,L_n,R}^{u_\infty^-}\right]  
\leq 
\left(C(\lambda)^2\cdot l_0^{2 \lambda}\right)^{2^n-1}
\left(\sup\limits_{x \in \Z^d} \, \mathbb P\left[G_{x,L_0,R}^{u_0}\right]   
+ \varepsilon(u_\infty^-,l_0, L_0)  \right)^{2^n}\,,
\end{equation}
where the constant $C(\lambda)$ was defined in \eqref{eq:cascading:3}.
\end{lemma}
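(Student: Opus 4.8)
The plan is to follow the proof of \cite[Theorem~3.4]{Sznitman:Decoupling} essentially verbatim, with the single change that the decoupling input \cite[Theorem~2.6]{Sznitman:Decoupling} is replaced by Theorem~\ref{theo2.6} above, and that events defined through $\mathcal I^u$ are replaced by events defined through $\local^u$. Throughout, I would fix $x\in\Z^d$ and argue by unfolding the cascading property \eqref{eq:cascading:4} over the $n$ scales $L_n>L_{n-1}>\dots>L_0$.

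\emph{Unfolding the cascading property.} Since $l_0$ is a multiple of $100$ by \eqref{def:scalesLn}, one application of \eqref{eq:cascading:4} with $l=l_0$, $L=L_{n-1}$ to $G_{x,L_n,R}$ produces a set $\Lambda\subseteq\ballZ(x,9L_n)$ with $|\Lambda|\le C(\lambda)l_0^\lambda$ and
\[
G_{x,L_n,R}\ \subseteq\ \bigcup_{\substack{x_1,x_2\in\Lambda\\ |x_1-x_2|\ge L_n/100}}G_{x_1,L_{n-1},R}\cap G_{x_2,L_{n-1},R}.
\]
Iterating this on each factor down to scale $L_0$, one obtains --- exactly as in \cite[Theorem~3.4]{Sznitman:Decoupling} --- a family $\widehat\Lambda_{x,n}\subseteq\Lambda_{x,n}$ of admissible embeddings $\mathcal T$ of the binary tree $T_n$ with $\mathcal T(\emptyset)=x$ such that
\[
G_{x,L_n,R}\ \subseteq\ \bigcup_{\mathcal T\in\widehat\Lambda_{x,n}}\ \bigcap_{m\in T_{(n)}}G_{x_{m,\mathcal T},L_0,R},
\qquad
|\widehat\Lambda_{x,n}|\le\big(C(\lambda)^2\,l_0^{2\lambda}\big)^{2^n-1}.
\]
Here the nesting $\widetilde C_{m_1,\mathcal T}\cup\widetilde C_{m_2,\mathcal T}\subseteq\widetilde C_{m,\mathcal T}$ and the separation $|x_{m_1,\mathcal T}-x_{m_2,\mathcal T}|\ge L_{n-k}/100$ required of $\mathcal T\in\Lambda_n$ follow from \eqref{eq:cascading:2} and \eqref{eq:cascading:4} together with $l_0\ge c(d)$ (for instance $l_0\ge 10$, so that $9L_j+10L_{j-1}\le 10L_j$), and the cardinality bound follows from \eqref{eq:cascading:3}: each of the $2^n-1$ internal nodes of $T_n$ offers at most $|\Lambda|^2\le\big(C(\lambda)l_0^\lambda\big)^2$ choices for the ordered pair of images of its two children.

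\emph{Decoupling and union bound.} Next I would fix $\mathcal T\in\widehat\Lambda_{x,n}$ and put $B_m=G_{x_{m,\mathcal T},L_0,R}$ for $m\in T_{(n)}$. By Definition~\ref{def:cascading}, $G_{y,L_0,R}$ is $\sigma(\Psi_z,z\in\ballZ(y,10L_0))$-measurable, and at depth $n$ we have $\widetilde C_{m,\mathcal T}=\ballZ(x_{m,\mathcal T},10L_0)$, so $(B_m)_{m\in T_{(n)}}$ is $\mathcal T$-adapted in the sense of \eqref{eq:2.5} and consists of increasing events. Theorem~\ref{theo2.6}, applied with the $u_\infty^-$ of \eqref{eq:2.70}, therefore gives
\[
\P\Big[\bigcap_{m\in T_{(n)}}G^{u_\infty^-}_{x_{m,\mathcal T},L_0,R}\Big]\ \le\ \prod_{m\in T_{(n)}}\Big(\P\big[G^{u_0}_{x_{m,\mathcal T},L_0,R}\big]+\varepsilon(u_\infty^-,l_0,L_0)\Big)\ \le\ \Big(\sup_{y\in\Z^d}\P\big[G^{u_0}_{y,L_0,R}\big]+\varepsilon(u_\infty^-,l_0,L_0)\Big)^{2^n}.
\]
Combining the displayed inclusion, a union bound over $\mathcal T\in\widehat\Lambda_{x,n}$, and the cardinality bound, and then taking the supremum over $x\in\Z^d$, yields \eqref{eq:3.22}. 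For $n=0$ there is nothing to iterate: $u_\infty^-\le u_0$ by \eqref{eq:2.70}, the events are increasing, so \eqref{eq:stochdomination} gives $\P[G^{u_\infty^-}_{x,L_0,R}]\le\P[G^{u_0}_{x,L_0,R}]$, which is the claim since $\varepsilon(u_\infty^-,l_0,L_0)\ge 0$.

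\emph{Main obstacle.} The step that actually requires care --- rather than a one-line appeal to a previous result --- is the recursive construction of $\widehat\Lambda_{x,n}$: one must verify that unfolding \eqref{eq:cascading:2}--\eqref{eq:cascading:4} through $n$ scales produces bona fide elements of $\Lambda_n$ (so that the leaf collection is genuinely $\mathcal T$-adapted and Theorem~\ref{theo2.6} applies) and that the counting yields precisely the factor $\big(C(\lambda)^2l_0^{2\lambda}\big)^{2^n-1}$. This is exactly the bookkeeping carried out in \cite[Theorem~3.4]{Sznitman:Decoupling}; everything else in the present proof is a mechanical transcription of that argument with $\mathcal I^u$ replaced by $\local^u$ and \cite[Theorem~2.6]{Sznitman:Decoupling} replaced by Theorem~\ref{theo2.6}.
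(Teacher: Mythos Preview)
Your proposal is correct and follows essentially the same approach as the paper, which simply refers to the proof of \cite[Theorem~3.4]{Sznitman:Decoupling} with the substitutions $K=2$, $\nu=d-2$, $\nu'=\tfrac{d-2}{2}$ and notes that uniformity in $R$ comes from the uniform bound \eqref{eq:cascading:3}. In fact you supply more detail than the paper itself (the explicit unfolding into admissible embeddings, the cardinality count $|\widehat\Lambda_{x,n}|\le(C(\lambda)^2 l_0^{2\lambda})^{2^n-1}$, and the verification of the nesting condition via $l_0\ge 10$), all of which is correct.
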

\begin{proof}
The proof of Lemma \ref{l:theorem3.4} is identical to that of \cite[Theorem 3.4]{Sznitman:Decoupling}. 
We only need to make the particular choices  $K=2$, $\nu = d-2$ and  $\nu ' = \frac{d-2}{2}$, and 
note that the inequality \eqref{eq:3.22} holds uniformly in $R$ because the bound of \eqref{eq:cascading:3} holds uniformly in $R$.
We omit the details.
\end{proof}

\subsection{Proof of Lemma \ref{l:probability:cascading}}

We are now ready to prove Lemma~\ref{l:probability:cascading}, using Lemma~\ref{l:theorem3.4}. 
This is similar to the proof of \cite[Corollary~3.5]{Sznitman:Decoupling}. 

Let $\mathcal G= (G_{x,L,R})_{x\in \Z^d, L\geq 1, R\geq 0 }$ be a family 
of increasing events on $(\loczd, \loczdsig)$ cascading uniformly in $R$ with complexity at most $\lambda>0$. 
Recall the notation from \eqref{eq:2.70} and \eqref{eq:2.73}. 
We will choose $l_0 \ge c(d)$, $L_0 \geq 1$, $u_0 > 0$, and $R\geq 0$ so that 
\begin{equation}\label{half}
C(\lambda)^2\cdot l_0^{2 \lambda} \cdot
\left(
\sup\limits_{x \in \Z^d} \, \mathbb P \left[G_{x,L_0,R}^{u_0}\right] 
+ \varepsilon(u^-_\infty, l_0, L_0)  
\right) 
\leq \frac12 .\
\end{equation}
Once we do so, \eqref{eq:probability:cascading:2} will immediately follow from Lemma \ref{l:theorem3.4} 
with $l_0$, $L_0$, and $R\geq 0$ as in \eqref{half} and $u = u^-_\infty$. 

\medskip

Let $u_0 = u_{L_0} = {L_0}^{2-d}$. By \eqref{eq:2.70} and \eqref{eq:2.73}, 
for all large enough $l_0\geq c(d)$, we have  
\begin{equation}\label{quarter_1}
\sup_{L_0 \geq 1 }  C(\lambda)^2\cdot l_0^{2 \lambda} \cdot \varepsilon( u^-_\infty  ,l_0, L_0) \leq \frac14.
\end{equation}
We fix $l_0$ satisfying \eqref{quarter_1}.
Now we use our assumption \eqref{eq:probability:cascading:1} to choose $L_0 \geq 1$ and $R \geq 0$ such that
\begin{equation}\label{quarter_2}
C(\lambda)^2\cdot l_0^{2 \lambda} \cdot
\sup\limits_{x \in \Z^d} \, \mathbb P\left[G_{x,L_0,R}^{u_0}\right] \leq \frac14. 
\end{equation}
The combination of \eqref{quarter_1} and \eqref{quarter_2} gives \eqref{half} and finishes the proof of
Lemma \ref{l:probability:cascading}.
\qed

\paragraph{Acknowledgements.}
We thank A.-S. Sznitman for useful comments on the draft. 
We also thank the referee for careful reading of the paper. 
The work on this paper was done while the authors were at ETH Z\"urich. 
During that time, the research of AD was supported by an ETH Fellowship, and  
the research of BR and AS was supported by the grant ERC-2009-AdG 245728-RWPERCRI.

\end{document}